\renewcommand\arraystretch{1.5}
\title{Complex zeros of Bessel function derivatives and associated orthogonal polynomials}
\author{Seok-Young Chung\footnote{seok-young.chung@ucf.edu. Department of Mathematics, University of Central Florida,
4393 Andromeda Loop N., Orlando, FL 32816, USA.}
\and Sujin Lee\footnote{watertrue015@gmail.com. Department of Mathematics, College of Natural Sciences, Chung-Ang University,
84 Heukseok-Ro, Dongjak-Gu, Seoul 06974, Korea.}
\and Young Woong Park\footnote{ywpark1839@gmail.com. Department of Mathematics, College of Natural Sciences, Chung-Ang University,
84 Heukseok-Ro, Dongjak-Gu, Seoul 06974, Korea.}}
\date{}
\begin{document}

\maketitle

{\bf Abstract.} 
We introduce a sequence of orthogonal polynomials whose associated moments are the Rayleigh-type sums, involving the zeros of the Bessel derivative $J_\nu'$ of order $\nu$. We also discuss the fundamental properties of those polynomials such as recurrence, orthogonality, etc.
Consequently, we obtain a formula for the Hankel determinant, elements of which are chosen as the aforementioned Rayleigh-type sums.
As an application, we complete the Hurwitz-type theorem for $J_\nu'$, which deals with the number of complex zeros of $J_\nu'$ depending on the range of $\nu$.

\medskip

{\bf Keywords.} {Bessel function derivative, complex zeros, continued fraction, Hankel determinant, Lommel polynomials, orthogonal polynomials, Rayleigh sum.}

\medskip

{\bf 2020 MSC.} {30C15, 30B70, 33C10, 33C47}

\section{Introduction}
The first kind Bessel function of order $\nu$ is given by
\begin{equation*}
    J_\nu(x) = \sum_{k=0}^\infty \frac{(-1)^k}{k!\, \Gamma(\nu+k+1)} \rb{ \frac{x}{2} }^{2k+\nu},
\end{equation*}
where $\Gamma(x)$ denotes the gamma function. 

Over the past few decades, numerous authors (see for instance \cite{Sneddon}, \cite[\S 15.51]{Watson}, \cite{Giusti}, \cite{Kostin}, \cite{Frantzis} and references given there) have investigated the infinite series, involving the zeros of $J_\nu$ and $J_\nu'$ respectively, given by
\begin{equation}\label{RL1}
    \sigma_{\nu}(m) = \sum_{k\in \mathbb{Z}\setminus\cb{0}} \frac{1}{\big(j_{\nu,k}\big)^{m}},\quad \sigma_{\nu}'(m) = \sum_{k\in \mathbb{Z}\setminus\cb{0}} \frac{1}{\big(j_{\nu,k}'\big)^{m}},
\end{equation}
for $m=2,3,\cdots$, where $\big\{ j_{\nu,k} \big\}_{k=1}^\infty$ and $\big\{ j_{\nu,k}' \big\}_{k=1}^\infty$ denote the sequences of positive zeros of $J_{\nu}$ and $J_{\nu}'$, respectively, arranged in ascending order of magnitude. Note that $j_{\nu,-k}= -j_{\nu,k}$ represents the $k$th negative zero as the zeros of $J_\nu$ have a symmetry around the origin. Similarly $j_{\nu,-k}'= -j_{\nu,k}'$ for $k\in \mathbb{N}$. As long as $J_\nu$ and $J_\nu'$ have real zeros, which are satisfied when $\nu > -1$ and $\nu > 0$ respectively, the infinite series in equation \eqref{RL1} are referred to as Rayleigh-type sums.

The Lommel polynomials emerge as the orthogonal polynomials determined by moments of order $n$, as $\mu_n = \sigma_\nu(n+2)$ (see \cite[pp. 367--369]{Grosjean} for details).
We now shift our attention to the case where $\mu_n = \sigma'_\nu(n+2)$, $n=\mathbb{N}\cup\cb{0}$. A monic sequence of associated orthogonal polynomials is given by
\renewcommand{\arraystretch}{1}
\begin{equation}\label{P1}
    P_n(x) = \frac{1}{\Delta_{n-1}} \dm{ \mu_0 & \mu_1 & \cdots & \mu_n \\
    \mu_1 & \mu_2 & \cdots & \mu_{n+1}\\
    \vdots & \vdots & \ddots & \vdots \\
    \mu_{n-1} & \mu_n & \cdots & \mu_{2n-1}\\
    1 & x & \cdots & x^n},
\end{equation}
where $\Delta_m = \det(\mu_{i+j-2})_{1\le i,j\le m+1}$ and $\Delta_{-1}=1$. Moreover, since $\mu_n=0$ for odd $n$, it satisfies the three-term recurrence of the form
\begin{equation}\label{P2}
    P_{n+1}(x) = x P_{n}(x) - \frac{\Delta_{n-2}\Delta_n}{\Delta_{n-1}^2} P_{n-1}(x),
\end{equation}
provided that $\Delta_m \ne 0$, $m\in\mathbb{N}\cup\cb{0}$.
The above constructions follow \cite[(2.1.4) and (2.1.6)]{Ismail1} and \cite[Theorem 4.2 and 4.3]{Chihara}. Although $\Delta_n$ is of great importance to the expressions in \eqref{P1} and \eqref{P2}, the explicit form of the Hankel determinant $\Delta_n$ has not yet been addressed. In sections 2 and 3, we shall introduce the sequences of associated polynomials with $J_\nu'$ along with their elementary properties and derive an explicit formula of $\Delta_n$ for each $n\in\mathbb{N}$.

As for the application of $\Delta_m$ with its explicit formula, we have applied \cite[Theorem 7 and 9]{Kytmanov}, which states as follows:
\begin{varthm}{A}\label{thm:A}
    Let $f$ be a real entire function of order of growth $1$ with an infinite number of zeros $\{\zeta_n\}_{n=1}^\infty$, repeated according to multiplicity.
    Define
    \begin{equation*}
        \sigma_k=\sum_{n=1}^\infty \frac{1}{\zeta_n^{k}},\quad k\ge 2,
    \end{equation*}
    and
    \begin{equation*}
        \mathcal{D}_n = \begin{vmatrix}
            \sigma_{2} & \sigma_{3} & \cdots & \sigma_{n+2} \\
            \sigma_{3} & \sigma_{4} & \cdots & \sigma_{n+3}\\
            \vdots & \vdots & \ddots & \vdots\\
            \sigma_{n+2} & \sigma_{n+3} & \cdots & \sigma_{2n+2}
        \end{vmatrix},\quad n=0,1,\cdots,
    \end{equation*}
    with $\mathcal{D}_{-1}=1$. If the sequence $\{\mathcal{D}_{n-1} \mathcal{D}_{n}\}_{n=0}^\infty$ contains $m$ negative numbers, then $f$ has exactly $m$ conjugate pairs of complex zeros and an infinite number of real zeros.
\end{varthm}
\noindent
This approach has been introduced by Baricz and {\v S}tampach \cite{Baricz1} to determine the number of complex zeros for the regular Coulomb wave functions for given parameters. Their result extends the classical theorem for the Bessel functions due to Hurwitz \cite{Hurwitz}, which states
\begin{varthm}{B}[Hurwitz]\label{thm:B}\
    \begin{enumerate}[label=\rm(\roman*)]
        \item If $\nu>-1$, then $J_\nu$ has only real zeros.
        \item If $-k-1 < \nu <-k$, where $k$ is positive odd integer, then $J_\nu$ has exactly $2k$ complex zeros among which are $2$ purely imaginary zeros.
        \item If $-k-1< \nu <-k$, where $k$ is positive even integer, then $J_\nu$ has exactly $2k$ complex zeros, none of which are purely imaginary.
    \end{enumerate}
\end{varthm}
\noindent It has attracted the interest of numerous authors. We refer to \cite{Baricz1} and references therein. In this paper, it is important to note that the term `complex zero' is consistently used to refer to `nonreal zero' in all contexts.

An analogue of Hurwitz theorem for $J_\nu'$ remains incomplete, although partial results have been discussed. 
The relevant historical results are summarized below, presented in chronological order:

\begin{varthm}{C}\label{thm:C}\
    \begin{enumerate}[label=\rm(\roman*)]
    \item {\rm(Lommel \cite[\S 6]{Olver})} $J_\nu'$ has only real zeros for $\nu\ge0$.

    \item {\rm(Dixon \cite{Dixson}, \cite[\S 15.25]{Watson})} For $-1<\nu<0$, all zeros of $J_\nu'$ are real except for a single conjugate pair of purely imaginary zeros. \

    \item {\rm(Lense \cite[pp. 149-151]{Lense})} 
    For $\nu=-1,\,-2,\,-3,\cdots$, the zeros of $J_\nu'$ are all real. Furthermore, for sufficiently small $\epsilon>0$ and $k\in \mathbb{N}$, when $-k-\epsilon<\nu<-k$, $J_\nu'$ has $2$ real and $2k-2$ complex zeros near the origin, while when $-k<\nu<-k+\epsilon$, $J_\nu'$ has $2k$ complex zeros near the origin. Besides, $J_\nu'$ has $2$ complex zeros near the origin for $-\epsilon<\nu <0$ while $J_\nu'$ has $2$ real zeros near the origin for $0<\nu<\epsilon$.

    \item {\rm(Lorch and Muldoon \cite[Theorem 5.1]{Lorch2})} There exists a unique $\nu^*$ within the interval $(-2,-1)$ such that $J_\nu'$ has exclusively real zeros when $\nu^*<\nu<-1$. 
    \end{enumerate}
\end{varthm}

In addition, Ismail and Muldoon \cite{Ismail4} proved that the Dini function $\alpha J_\nu(x) + \beta x J_\nu'(x)$ ($\beta\ne0$) has a single conjugate pair of purely imaginary zeros for $-1<\nu < -\alpha/\beta$, but only real zeros when $\nu \ge -\alpha/\beta$. It was also found by Baricz et al. \cite{Baricz2} that for $n\ge1$ and $\nu >n-1$, all zeros of $J_\nu^{(n)}(x)$ are real. 

Section 4 completes this Hurwitz-type theorem for $J_\nu'$ by providing a detailed description of the number of complex zeros of $J_\nu'$ depending on the range of a parameter $\nu$. In section 5, we provide a detailed proof of Theorem \ref{thm:main}.

\section{Orthogonal polynomials \texorpdfstring{$q_{n,\nu}$}{q n nu} originating from the continued fraction}

The correlation between continued fractions and orthogonal polynomials is widely studied in many literature works. This connection dates back to as early as 1894, when orthogonal polynomials were introduced within the framework of continued fractions by Stieltjes \cite{Stieltjes}. Our goal in this section is to derive a sequence of orthogonal polynomials using the context of continued fraction. Furthermore, the key properties of these polynomials, such as recurrence relations and orthogonality, will be discussed.

A notable distinction from the case of $J_\nu$ (as demonstrated in \cite{Grosjean}) lies in the absence of a three-term recurrence relation for $J_\nu'$ with respect to $\nu$. However, the well-known identities
\begin{align}
    J_\nu'(x) &= \frac{\nu}{x} J_\nu(x)  - J_{\nu+1}(x), \notag \\
    J_{\nu-1}(x) &= \frac{2\nu}{x} J_\nu(x)  - J_{\nu+1}(x), \label{Id}
\end{align}
serve to induce the continued fraction
\begin{equation*}
    \frac{J_\nu(1/x)}{J_\nu'(1/x)} =  \frac{ 1 }{ \displaystyle \nu x - \frac{\hspace{1.5em}1}{\displaystyle 2(\nu+1)x - \frac{1}{ \displaystyle 2(\nu+2)x - \frac{1\hspace{1.5em}}{ \displaystyle 2(\nu+3)x - \cdots   }}}},
\end{equation*}
or equivalently,
\begin{equation*}
    \frac{J_\nu(1/x)}{J_\nu'(1/x)} = \frac{1 \vert}{ \vert \nu x }- \frac{1 \vert}{\vert 2(\nu+1)x } - \frac{1 \vert}{\vert 2(\nu+2)x }- \frac{1 \vert}{\vert 2(\nu+3)x } -\cdots. 
\end{equation*}

The $n$th approximant of this continued fraction inherently evokes a sequence of orthogonal polynomials, as denominator polynomials, intricately linked to $J_\nu'$. 
To be more precise, the $n$th approximant of the above is given by
\begin{equation*}
    \frac{N_{n}(x)}{D_{n}(x)} = \frac{1 \vert}{ \vert \nu x }- \frac{1 \vert}{\vert 2(\nu+1)x } - \frac{1 \vert}{\vert 2(\nu+2)x }- \cdots- \frac{1 \vert}{\vert 2(\nu+n-1)x},
\end{equation*}
where $N_n$ and $D_n$ denote respectively the numerator and denominator polynomials, which are relatively prime. Moreover, those satisfy the three-term recurrence relation (see \cite[Theorem 2.6.1]{Ismail1}), given by
\begin{equation}\label{Lommel Recur}
    y_{n+1,\nu}(x) = xy_{n,\nu}(x) - \frac{1}{4(\nu+n-1)(\nu+n)}y_{n-1,\nu}(x),
\end{equation}
for $\nu\in \R\setminus\cb{0,-1,-2,\cdots}$ with initial values
\begin{align*}
    D_0(x) &= 1,\quad D_1(x) = \frac{x}{2},\quad
    N_0(x) = 0,\quad N_1(x) = \frac{1}{2\nu}.
\end{align*}
According to literature \cite[\S 6.5]{Ismail1}, \cite[\S 9.6]{Watson}, the recurrence \eqref{Lommel Recur} aligns with the recurrence for the Lommel polynomials $\cb{R_{n,\nu}(x)}_{n=0}^\infty$, written as
\begin{equation*}
    R_{n+1,\nu}(x) = \frac{2(\nu+n)}{x} R_{n,\nu}(x)- R_{n-1,\nu}(x),
\end{equation*}
with $R_{0,\nu}(x)=1$ and $R_{1,\nu}(x) = 2\nu/x$.
Consequently, it is simple to deduce that
\begin{align*}
    N_n(x) &= \frac{1}{2^n (\nu)_n} R_{n-1,\nu+1}(1/x),\\
    D_n(x) &= \frac{1}{2^{n+1} (\nu)_n}\big(R_{n,\nu}(1/x) - R_{n-2,\nu+2}(1/x) \big),
\end{align*}
where $(a)_n$ stands for the Pochhammer symbol, given by $(a)_n = \Gamma(a+n)/\Gamma(a)$, $n=0,1,\cdots$.
We define a sequence of polynomials $\cb{q_{n,\nu}(x)}_{n=0}^\infty$ as
\begin{equation*}
    q_{n,\nu}(x) = D_n(x) = \frac{1}{2^{n+1} (\nu)_n}\big(R_{n,\nu}(1/x) - R_{n-2,\nu+2}(1/x) \big).
\end{equation*}
We recall a classical result due to Hurwitz \cite[\S 9.65 (1)]{Watson}, stated as
\begin{equation}\label{limit1}
    \lim_{n\to \infty} \frac{(x/2)^{\nu+n-1}R_{n,\nu}(x)}{\Gamma(\nu+n)} = J_{\nu-1}(x),
\end{equation}
leading to the limit of $q_{n,\nu}$ as
\begin{align}\label{limit2}
  \lim_{n\to \infty} &\frac{(x/2)^{\nu+n-1} q_{n,\nu}(1/x)}{\Gamma(\nu+n)} = \frac{J_{\nu-1}(x)-J_{\nu+1}(x)}{2^{n+1}(\nu)_n} =  \frac{1}{2^{n}(\nu)_n} J_{\nu}'(x).
\end{align}

\begin{remark}
A sequence of polynomials $\cb{q_{n,\nu}(x)}_{n=0}^\infty$ can also be derived iteratively by applying \eqref{Id} into $J_\nu'(x) = \big(J_{\nu-1}(x)-J_{\nu+1}(x)\big)/2$. Consequently,
\begin{equation*}
        J_\nu'(x) = -2^{m-1}(\nu)_{m-1}q_{m-1,\nu}(x) J_{\nu+m}(x) + 2^m(\nu)_m q_{m,\nu}(x) J_{\nu+m-1}(x).
\end{equation*}
Furthermore, for $\nu>0$, the positive zeros of $q_{m,\nu}$ compensate the positive zeros of $J_{\nu+m}$ to be interlaced with the positive zeros of $J_\nu'$, as long as $J_{\nu+m}$ and $J_\nu'$ do not share zeros (see \cite[Theorem 5.1]{CLP}).
\end{remark}

We now present the Christoffel-Darboux identities for the sequence of polynomials $\cb{q_{n,\nu}(x)}_{n=0}^\infty$ as follows:
\begin{theorem}\label{thm:2.1}
Let $\nu\in \R\setminus\cb{0,-1,-2,\cdots}$. The Christoffel-Darboux identities holds for $n\ge 0${\rm :}
\begin{align}
    \sum_{k=0}^n  \epsilon_k & \frac{q_{k,\nu}(x)q_{k,\nu}(y)}{\lambda_k}
    = \frac{q_{n+1,\nu}(x)q_{n,\nu}(y)- q_{n+1,\nu}(y)q_{n,\nu}(x)}{\lambda_{n}(x-y)}, \label{CD1}\\
    \sum_{k=0}^n  \epsilon_k & \frac{\big(q_{k,\nu}(x)\big)^2}{\lambda_k} 
    = \frac{q_{n+1,\nu}'(x)q_{n,\nu}(x)- q_{n+1,\nu}(x)q_{n,\nu}'(x)}{\lambda_{n}}, \label{CD2}
\end{align}
where $\epsilon_0=1/2$, $\epsilon_j =1$ for $j\ge 1$, and $\lambda_n = 1/[ 4^n (\nu)_n (\nu+1)_n ]$ for $n=0,1,\cdots$.
\end{theorem}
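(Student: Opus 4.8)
The plan is to prove the bilinear identity \eqref{CD1} by induction on $n$, and then obtain the confluent form \eqref{CD2} by letting $y\to x$. The engine of the whole argument is a single algebraic identity linking the sequence $\lambda_n$ to the recurrence coefficient in \eqref{Lommel Recur}. Writing $a_n = 1/[4(\nu+n-1)(\nu+n)]$ for that coefficient, so that $q_{n+1,\nu}(x) = x\,q_{n,\nu}(x) - a_n\,q_{n-1,\nu}(x)$, I would first check directly from $(a)_n = (a)_{n-1}(a+n-1)$ that
\[
\frac{\lambda_n}{\lambda_{n-1}} = \frac{4^{n-1}(\nu)_{n-1}(\nu+1)_{n-1}}{4^{n}(\nu)_{n}(\nu+1)_{n}} = \frac{1}{4(\nu+n-1)(\nu+n)} = a_n.
\]
This is the relation that will make the induction telescope.

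For the base case $n=0$ I would use the explicit initial data $q_{0,\nu}(x)=D_0(x)=1$ and $q_{1,\nu}(x)=D_1(x)=x/2$, together with $\lambda_0=1$. The left side of \eqref{CD1} is then $\epsilon_0\,q_{0,\nu}(x)q_{0,\nu}(y)/\lambda_0 = 1/2$, while the right side equals $(q_{1,\nu}(x)-q_{1,\nu}(y))/(x-y)=1/2$. The factor $\epsilon_0=1/2$ is exactly what is needed here: it reflects that $q_{1,\nu}$ has leading coefficient $1/2$ rather than $1$, so these polynomials are not monic in the usual normalization. This normalization bookkeeping, rather than any deep difficulty, is the one point that requires genuine care.

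For the inductive step I would assume \eqref{CD1} at level $n-1$, add the term $\epsilon_n\,q_{n,\nu}(x)q_{n,\nu}(y)/\lambda_n$ (with $\epsilon_n=1$ since $n\ge1$), and substitute the recurrence \eqref{Lommel Recur} for $q_{n+1,\nu}(x)$ and $q_{n+1,\nu}(y)$ into the claimed right-hand side. The linear terms combine into $(x-y)\,q_{n,\nu}(x)q_{n,\nu}(y)$ and the $a_n$-terms assemble into a copy of the level-$(n-1)$ numerator $q_{n,\nu}(x)q_{n-1,\nu}(y)-q_{n,\nu}(y)q_{n-1,\nu}(x)$; dividing by $\lambda_n(x-y)$ and using $a_n/\lambda_n = 1/\lambda_{n-1}$ reproduces precisely the inductive hypothesis plus the new $k=n$ term, with no residual terms surviving. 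This is the telescoping promised above.

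Finally, \eqref{CD2} follows from \eqref{CD1} by taking $y\to x$. The left side tends termwise to $\sum_{k=0}^n \epsilon_k\,(q_{k,\nu}(x))^2/\lambda_k$, while the right side is a $0/0$ form whose value, by a single application of l'Hôpital's rule in $y$ (the $y$-derivative of the numerator at $y=x$ is $q_{n+1,\nu}(x)q_{n,\nu}'(x)-q_{n+1,\nu}'(x)q_{n,\nu}(x)$, and that of the denominator is $-\lambda_n$), equals $(q_{n+1,\nu}'(x)q_{n,\nu}(x)-q_{n+1,\nu}(x)q_{n,\nu}'(x))/\lambda_n$, as claimed. I expect no genuine obstacle beyond verifying the ratio identity for $\lambda_n$ and keeping the $\epsilon_0$ normalization straight.
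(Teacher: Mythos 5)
Your proposal is correct and takes essentially the same approach as the paper: the paper's proof likewise multiplies the recurrence \eqref{Lommel Recur} by $q_{n,\nu}(y)$, antisymmetrizes to get $(x-y)q_{n,\nu}(x)q_{n,\nu}(y)=\Delta_{n+1}(x,y)-\beta_n\Delta_n(x,y)$, and telescopes using $\lambda_n/\lambda_{n-1}=\beta_n$ (your induction is just the unrolled form of that iteration), with the same $n=0$ base case explaining $\epsilon_0=1/2$ and the same limit $y\to x$ for \eqref{CD2}.
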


\begin{proof}
The case of $n=0$ is trivial, as it follows directly from $q_{0,\nu}(x)=1$ and $q_{1,\nu}(x)=x/2$. For $n\ge1$, we introduce the quantity $\Delta_n(x,y)= q_n(x)q_{n-1}(y)- q_n(y)q_{n-1}(x)$. Employing the recurrence \eqref{Lommel Recur}, given by
\begin{align*}
    x q_{n,\nu}(x) = q_{n+1,\nu}(x) + \beta_{n} q_{n-1,\nu}(x),
    \quad \beta_n = \frac{1}{4(\nu+n-1)(\nu+n)},
\end{align*}
and upon multiplication with $q_n(y)$ followed by subtraction with the roles of $x$ and $y$ interchanged, we obtain that
\begin{equation*}
    (x-y) q_{n,\nu}(x)q_{n,\nu}(y) = \Delta_{n+1}(x,y)-\beta_n \Delta_n(x,y).
\end{equation*}
Dividing both sides by $\lambda_n = \beta_1\cdots \beta_{n}$ and $x-y$, an iterative procedure gives
\begin{align*}
    \frac{\Delta_{n+1}(x,y)}{\lambda_{n}(x-y)} = \sum_{k=1}^n \frac{q_{k,\nu}(x)q_{k,\nu}(y)}{\lambda_k} + \frac{\Delta_{1}(x,y)}{x-y}.
\end{align*}
Since $q_{0,\nu}(x)= 1$ and $q_{1,\nu}(x)=x/2$, we can express $\Delta_1(x,y)$ as $(x-y)q_{0,\nu}(x)q_{0,\nu}(y)/2$ and we set $\lambda_0=1$, which directly demonstrates \eqref{CD1}. The second identity \eqref{CD2} follows from the limiting case $y\to x$.
\end{proof}

Following \cite[Theorem 2.2.3]{Ismail1}, it is established that the zeros of $q_{n,\nu}(x)$ are real and simple for $\nu>0$. Moreover, for $\nu>0$, the zeros of $q_{n,\nu}(x)$ and $q_{n+1,\nu}(x)$ are interlaced in the sense that
\begin{equation*}
    x_{n+1,\nu,1} < x_{n,\nu,1} < x_{n+1,\nu,2} < x_{n,\nu,2} <\cdots < x_{n,\nu,n} < x_{n+1,\nu,n+1},
\end{equation*}
where $\cb{x_{m,\nu,j}}_{j=1}^m$ represents the sequence of zeros of $q_{m,\nu}(x)$ arranged such that $x_{m,\nu,1} <\cdots< x_{m,\nu,m}$. In what follows, for a fixed $\nu$, we abbreviate $x_{m,\nu,j}$ by $x_{m,j}$, denoting the $j$th zero of $q_{m,\nu}$ in increasing order.

Replacing $n$ by $n-1$, and substituting $x=x_{n,j}$ and $y=x_{n,s}$, a direct consequence of Theorem \ref{thm:2.1} can be expressed as follows:

\begin{theorem}\label{thm:2.2}
Let $\nu\in \R\setminus\cb{0,-1,-2,\cdots}$ and $n\in\mathbb{N}$. Then we have
\begin{equation*}
    \rho(x_{n,j}) \sum_{k=0}^{n-1} \epsilon_k \frac{q_{k,\nu}(x_{n,j})q_{k,\nu}(x_{n,s})}{\lambda_k} = \delta_{j,s},
\end{equation*}
where $\delta_{j,s}$ denotes the Kronecker delta and
\begin{equation}\label{rho}
    \rho(x_{n,j}) = \frac{\lambda_{n-1}}{q_{n,\nu}'(x_{n,j})q_{n-1,\nu}(x_{n,j})}.
\end{equation}
\end{theorem}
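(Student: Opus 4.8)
The plan is to obtain everything directly from Theorem \ref{thm:2.1} by specializing the Christoffel--Darboux identities at the zeros of $q_{n,\nu}$. First I would apply \eqref{CD1} and \eqref{CD2} with the index $n$ replaced by $n-1$, so that the right-hand sides involve only the consecutive pair $q_{n,\nu}$ and $q_{n-1,\nu}$, and then evaluate at $x=x_{n,j}$, $y=x_{n,s}$, both zeros of $q_{n,\nu}$. The decisive and entirely elementary observation is that $q_{n,\nu}(x_{n,j})=q_{n,\nu}(x_{n,s})=0$, which lets me annihilate the factor $q_{n,\nu}$ wherever it appears on the right-hand side. The whole argument is thus a substitution-and-cancellation, split according to whether $j\ne s$ or $j=s$.

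For the off-diagonal case $j\ne s$ I would use \eqref{CD1}, whose right-hand side becomes
\[
\frac{q_{n,\nu}(x_{n,j})\,q_{n-1,\nu}(x_{n,s})-q_{n,\nu}(x_{n,s})\,q_{n-1,\nu}(x_{n,j})}{\lambda_{n-1}\,(x_{n,j}-x_{n,s})}.
\]
Since both $q_{n,\nu}(x_{n,j})$ and $q_{n,\nu}(x_{n,s})$ vanish while $x_{n,j}\ne x_{n,s}$, this expression is $0$, and multiplying by $\rho(x_{n,j})$ yields $0=\delta_{j,s}$. For the diagonal case $j=s$ I would instead use the confluent identity \eqref{CD2} at $x=x_{n,j}$; there the summand containing $q_{n,\nu}(x_{n,j})\,q_{n-1,\nu}'(x_{n,j})$ drops out, leaving precisely $q_{n,\nu}'(x_{n,j})\,q_{n-1,\nu}(x_{n,j})/\lambda_{n-1}$, which is exactly the reciprocal of $\rho(x_{n,j})$ as defined in \eqref{rho}. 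Hence multiplication by $\rho(x_{n,j})$ returns $1=\delta_{j,j}$.

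The step needing care is not any computation but the \emph{well-definedness} of $\rho(x_{n,j})$, together with the distinctness of the zeros used in the case $j\ne s$: I must confirm that the denominator $q_{n,\nu}'(x_{n,j})\,q_{n-1,\nu}(x_{n,j})$ is nonzero. Simplicity of the zeros of $q_{n,\nu}$ gives both their distinctness and $q_{n,\nu}'(x_{n,j})\ne0$, a fact already recorded in the discussion following Theorem \ref{thm:2.1}. The remaining factor is handled by noting that consecutive polynomials in a three-term recurrence share no common zero: if $q_{n,\nu}(a)=q_{n-1,\nu}(a)=0$ for some $a$, then \eqref{Lommel Recur} would propagate to force $q_{k,\nu}(a)=0$ for every $k$, contradicting $q_{0,\nu}\equiv1$; thus $q_{n-1,\nu}(x_{n,j})\ne0$. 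Once these facts are invoked, the cancellation argument above completes the proof.
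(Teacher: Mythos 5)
Your proof is correct and follows essentially the same route as the paper, which obtains the theorem as a direct consequence of Theorem \ref{thm:2.1} by replacing $n$ with $n-1$ and substituting $x=x_{n,j}$, $y=x_{n,s}$ into \eqref{CD1} (off-diagonal case) and \eqref{CD2} (diagonal case). Your additional verification that $q_{n,\nu}'(x_{n,j})q_{n-1,\nu}(x_{n,j})\ne0$ --- via simplicity of the zeros and the standard no-common-zeros argument for consecutive polynomials in a three-term recurrence --- fills in details the paper leaves implicit, and is a welcome completion rather than a deviation.
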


Note that if $\nu>0$, then $\lambda_n = \beta_1\cdots \beta_n>0$ for $n\ge1$, and $\lambda_0=1>0$. Consequently $\rho(x_{n,j})$, as defined in \eqref{rho}, remains positive when $\nu>0$.

\begin{corollary}\label{coro:2.1}
Let $\nu>0$. The following holds true
\begin{align*}
    \frac{\epsilon_{j}}{\lambda_{j}}  \sum_{k=1}^{n} \rho(x_{n,k}) 
    q_{j,\nu}(x_{n,k})q_{s,\nu}(x_{n,k})= \delta_{j,s},
\end{align*}
for each $j,s=0,1,\cdots,n-1$.   
\end{corollary}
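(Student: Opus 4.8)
The plan is to recognize the desired identity as the statement that the reversed product of two mutually inverse square matrices is again the identity; in other words, Theorem~\ref{thm:2.2} and Corollary~\ref{coro:2.1} are the two orthogonality relations $MM^{T}=I$ and $M^{T}M=I$ attached to one and the same invertible matrix $M$. Concretely, first I would introduce the $n\times n$ matrix $P=\big(q_{k,\nu}(x_{n,j})\big)$ whose rows are indexed by the zeros $j=1,\dots,n$ and whose columns are indexed by the degrees $k=0,\dots,n-1$, together with the diagonal matrices $R=\operatorname{diag}\big(\rho(x_{n,1}),\dots,\rho(x_{n,n})\big)$ and $W=\operatorname{diag}\big(\epsilon_{0}/\lambda_{0},\dots,\epsilon_{n-1}/\lambda_{n-1}\big)$. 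With this notation Theorem~\ref{thm:2.2} reads exactly as the matrix identity
\begin{equation*}
    R\,P\,W\,P^{T}=I_{n},
\end{equation*}
since the $(j,s)$ entry of the left-hand side is $\rho(x_{n,j})\sum_{k=0}^{n-1}\epsilon_{k}\lambda_{k}^{-1}q_{k,\nu}(x_{n,j})q_{k,\nu}(x_{n,s})$.

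The key step is then the elementary linear-algebra fact that a one-sided inverse of a square matrix is automatically a two-sided inverse. Reading $R\,P\,W\,P^{T}=I_{n}$ as $(RPW)P^{T}=I_{n}$ shows that $P^{T}$ is a right inverse of the square matrix $RPW$; hence it is also a left inverse, so $P^{T}(RPW)=I_{n}$. Taking the $(j,s)$ entry of this reversed product, with $j,s\in\{0,\dots,n-1\}$ now indexing degrees and the summation index $k$ running over the zeros, yields
\begin{equation*}
    \frac{\epsilon_{s}}{\lambda_{s}}\sum_{k=1}^{n}\rho(x_{n,k})\,q_{j,\nu}(x_{n,k})\,q_{s,\nu}(x_{n,k})=\delta_{j,s},
\end{equation*}
which is the assertion of the corollary; since the sum vanishes whenever $j\neq s$, the prefactor $\epsilon_{s}/\lambda_{s}$ may equally be written $\epsilon_{j}/\lambda_{j}$, matching the stated form.

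The only point requiring care -- and the reason the hypothesis is sharpened to $\nu>0$ rather than $\nu\in\R\setminus\{0,-1,-2,\dots\}$ -- is to justify that the matrices involved are genuinely invertible. I would argue this from positivity: for $\nu>0$ the zeros $x_{n,j}$ are real and simple, so $P$ is a nonsingular generalized Vandermonde matrix (the $q_{k,\nu}$ have exact degrees $0,\dots,n-1$ and hence form a basis for polynomials of degree at most $n-1$), while the remark following Theorem~\ref{thm:2.2} guarantees $\rho(x_{n,j})>0$ and clearly $\epsilon_{k}/\lambda_{k}>0$, so $R$ and $W$ are invertible. Alternatively, invertibility of every factor is forced a posteriori once $RPWP^{T}=I_{n}$ is established, so the one-sided-inverse argument may be invoked directly. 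I do not anticipate a genuine obstacle here: the entire content is bookkeeping of the two index sets -- degrees versus zeros -- and the symmetric normalization $M=R^{1/2}PW^{1/2}$, legitimate precisely because $\nu>0$ makes the diagonal entries positive, renders the duality transparent as the single statement $MM^{T}=M^{T}M=I_{n}$.
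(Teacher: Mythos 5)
Your proposal is correct and takes essentially the same approach as the paper: the paper packages Theorem~\ref{thm:2.2} as $UU^{T}=I_{n}$ for the single matrix $U=R^{1/2}PW^{1/2}$ (the square roots being legitimate since $\nu>0$ makes $\rho(x_{n,k})$ and $\epsilon_{k}/\lambda_{k}$ positive) and then invokes $U^{T}U=I_{n}$, which is exactly your one-sided-inverse-is-two-sided argument in symmetrized form. The only cosmetic difference is that you keep the factors $R$, $P$, $W$ separate rather than absorbing square roots --- an equivalence you yourself point out in your closing remark.
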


\begin{proof}
We introduce
\begin{align*}
    U = \qb{ u_{i,j} }_{1\le i,j\le n}\in \R^{n\times n},\quad u_{i,j} = \sqrt{ \frac{\epsilon_{j-1}\rho\rb{x_{n,i}}}{\lambda_{j-1}} }  q_{j-1,\nu}(x_{n,i}),
\end{align*}
provided that $\nu>0$. Theorem \ref{thm:2.2} reads that $\delta_{j,s} = \rb{U U^T}_{j,s}$.
Since $U$ is orthogonal, we have $U^TU = I$, i.e.,
\begin{align*}
    \delta_{j,s}
    = \sum_{k=1}^{n} u_{k,j} u_{k,s} = \frac{\epsilon_{j-1}}{\lambda_{j-1}}  \sum_{k=1}^{n} \rho(x_{n,k})
     q_{j-1,\nu}(x_{n,k})q_{s-1,\nu}(x_{n,k}),
\end{align*}
leading to the result by shifting $j-1 \mapsto j$ and $s-1 \mapsto s$.
\end{proof}

We now restrict ourselves to the case of $\nu>0$ throughout this section. Let us define a sequence of right continuous step functions $\cb{\psi_n(x)}_{n\ge1}$ such that $\psi_n(x+0)-\psi_n(x-0)=0$ for $x \ne x_{n,k}$, $1\le k\le n$, and
\begin{equation*}
    \psi_n(-\infty)=0,\quad \psi_n(x_{n,k}+0)-\psi_n(x_{n,k}-0) =\rho(x_{n,k}).
\end{equation*}
Then it is evident from Corollary \ref{coro:2.1} that for $0\le j,s\le n-1$,
\begin{equation*}
    \int_\R q_{j,\nu}(t)q_{s,\nu}(t) d\psi_n(t) = \frac{\lambda_j}{\epsilon_j} \delta_{j,s}.
\end{equation*}

\begin{remark}\label{remark:2.1}
We list several routine consequences based on the reference \cite[Chap. 2]{Ismail1} as follows:
\begin{enumerate}[label=(\roman*)]
    \item Since $\cb{\beta_n}_{n=1}^\infty$ is bounded for $\nu>0$, there exists the unique orthogonality measure $\mu$ with bounded support such that
    \begin{equation}\label{ortho}
        \int_\R q_{j,\nu}(t)q_{s,\nu}(t) d\mu(t) =  \frac{\lambda_j}{\epsilon_j} \delta_{j,s},
    \end{equation}
    for all $j,s=0,1,\cdots$.

    \item Helly's selection principle (see also \cite[p. 13]{Shohat}) produces a subsequence $\{\psi_{n_j}\}_{j=1}^\infty$ of a sequence of step functions $\cb{\psi_n}_{n=1}^\infty$, defined as above, which converges to a distribution function of the orthogonality measure $\mu$ as presented in $\rm (i)$.

    \item Given that $\nu>0$, the interval $\qb{-1/j_{\nu,1}', 1/j_{\nu,1}'}$ is the convex hull of the support of $\mu$ where $j'_{\nu,1}$ denotes the smallest positive zero of $J'_\nu$, and thus supp$(\mu)\subseteq \qb{-1/j_{\nu,1}', 1/j_{\nu,1}'}$.
\end{enumerate}
\end{remark}

Recall that for $n\ge1$,
\begin{equation}\label{q Recur}
    xq_{n,\nu}(x) = q_{n+1,\nu}(x) + \beta_n q_{n-1,\nu}(x),
\end{equation}
where $\beta_n = 1/[4(\nu+n-1)(\nu+n)]$ and
\begin{equation}\label{q Recur2}
    q_{0,\nu}(x) =1,\quad q_{1,\nu}(x) = \frac{x}{2}.
\end{equation}
We introduce a sequence of polynomials $\cb{q_{n,\nu}^*(x)}_{n=0}^\infty$, defined as
\begin{equation}\label{2nd}
    q_{n,\nu}^*(x) = \int_\R \frac{q_{n,\nu}(x)- q_{n,\nu}(y)}{x-y} d\mu(y),
\end{equation}
in which $\mu$ represents the orthogonality measure as in Remark \ref{remark:2.1} (i).
It is straightforward to see that $\cb{q_{n,\nu}^*(x)}_{n=0}^\infty$ satisfies the recurrence \eqref{q Recur} with initial $q_{0,\nu}^*(x)= 0$ and $q_{1,\nu}^*(x)= 1$. Hence it can be rewritten as
\begin{equation}\label{q*}
    q_{n,\nu}^*(x) = 2\nu N_n(x) = \frac{2\nu}{2^n (\nu)_{n}} R_{n-1,\nu+1}(1/x),
\end{equation}
for $n\ge0$ and $\nu>0$.

Combining all the results described above, we establish Markov-type theorem analogous to \cite[Theorem 2.6.2]{Ismail1} as follows:
\begin{theorem}\label{thm:2.3}
Let $\nu>0$. Then for $x\not\in \qb{-1/j_{\nu,1}', 1/j_{\nu,1}'}$,
\begin{equation*}
    \frac{2\nu J_\nu(1/x)}{J_\nu'(1/x)} = \int_{-1/j_{\nu,1}'}^{1/j_{\nu,1}'} \frac{d\mu(t)}{x-t}. 
\end{equation*}
\end{theorem}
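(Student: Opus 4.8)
The plan is to identify the quotient $q_{n,\nu}^*(x)/q_{n,\nu}(x)$ as a single sequence admitting two different limit evaluations that must coincide. On one side, the polynomials $q_{n,\nu}$ are orthogonal with respect to the compactly supported measure $\mu$ of Remark \ref{remark:2.1}(i), and $q_{n,\nu}^*$ is exactly the associated (numerator) polynomial of the first kind defined in \eqref{2nd}, so their quotient is the $n$th convergent of the underlying $J$-fraction. Invoking the Markov-type theorem \cite[Theorem 2.6.2]{Ismail1}, whose hypotheses hold because $\mu$ is determinate (its support is compact), I would conclude that
\begin{equation*}
    \lim_{n\to\infty}\frac{q_{n,\nu}^*(x)}{q_{n,\nu}(x)} = \int_\R\frac{d\mu(t)}{x-t}
\end{equation*}
locally uniformly on $\mathbb{C}\setminus[-1/j_{\nu,1}',1/j_{\nu,1}']$, the complement of the convex hull of $\mathrm{supp}(\mu)$ identified in Remark \ref{remark:2.1}(iii). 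This is precisely the range of $x$ in the statement.

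On the other side, I would compute the very same limit explicitly from the Lommel-polynomial representations. Writing $u=1/x$, the denominator is $q_{n,\nu}(x)=\bigl(R_{n,\nu}(u)-R_{n-2,\nu+2}(u)\bigr)/[2^{n+1}(\nu)_n]$, and by \eqref{q*} the numerator is $q_{n,\nu}^*(x)=2\nu R_{n-1,\nu+1}(u)/[2^{n}(\nu)_n]$. Applying Hurwitz's limit \eqref{limit1} three times --- once directly and twice with the index shifts $(\nu,n)\mapsto(\nu+2,n-2)$ and $(\nu,n)\mapsto(\nu+1,n-1)$ --- gives $R_{n,\nu}(u)\sim C_n J_{\nu-1}(u)$, $R_{n-2,\nu+2}(u)\sim C_n J_{\nu+1}(u)$, and $R_{n-1,\nu+1}(u)\sim C_n J_\nu(u)$, where $C_n=\Gamma(\nu+n)/(u/2)^{\nu+n-1}$. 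The common prefactor $C_n/[2^n(\nu)_n]$ then cancels in the quotient, and using $J_{\nu-1}(u)-J_{\nu+1}(u)=2J_\nu'(u)$ I obtain
\begin{equation*}
    \lim_{n\to\infty}\frac{q_{n,\nu}^*(x)}{q_{n,\nu}(x)} = \frac{2\nu\,J_\nu(1/x)}{J_\nu'(1/x)}.
\end{equation*}
Equating the two evaluations of the limit yields the asserted identity.

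The hard part will be justifying the passage to the limit inside the quotient, namely replacing the limit of the ratio by the ratio of the limits of the normalized sequences $C_n^{-1}q_{n,\nu}^*$ and $C_n^{-1}q_{n,\nu}$. This needs two ingredients: that the convergence in \eqref{limit1} is locally uniform in $u$, so that the manipulation is valid on compact sets in the $x$-variable, and that the limiting denominator $J_\nu'(1/x)$ does not vanish on the admissible region. The latter follows from the localization of the zeros of $J_\nu'$ for $\nu>0$: by Theorem \ref{thm:C}(i) they are all real, and the smallest in modulus is $j_{\nu,1}'$, so for finite $x\notin[-1/j_{\nu,1}',1/j_{\nu,1}']$ the point $1/x$ is never a zero of $J_\nu'$. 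This keeps the right-hand side finite and analytic exactly off the stated interval, consistent with the Stieltjes transform furnished by Markov's theorem, and completes the identification.
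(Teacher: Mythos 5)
Your proposal is correct, and it reaches the identity by a genuinely shorter route than the paper on the analytic half of the argument. Where you invoke \cite[Theorem 2.6.2]{Ismail1} (Markov's theorem) as a black box --- justified, as you note, by the compact support of $\mu$ from Remark \ref{remark:2.1}(i) (hence determinacy) and by the identification of the convex hull of $\mathrm{supp}(\mu)$ in Remark \ref{remark:2.1}(iii) --- the paper instead re-derives that convergence from scratch: it expands $q_{n,\nu}^*/q_{n,\nu}$ in partial fractions over the zeros $x_{n,k}$, computes the residues via the Christoffel--Darboux identity \eqref{CD1} to get $c_k=\rho(x_{n,k})$ (the Gauss-quadrature weights of Theorem \ref{thm:2.2}), so that $q_{n,\nu}^*(x)/q_{n,\nu}(x)=\int_\R d\psi_n(t)/(x-t)$, and then passes to the limit along a subsequence by Helly's selection principle (Remark \ref{remark:2.1}(ii)--(iii)). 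The second half of your argument --- the triple application of Hurwitz's limit \eqref{limit1} with shifted indices, the exact cancellation of the common factor $C_n$, and $J_{\nu-1}-J_{\nu+1}=2J_\nu'$ --- is precisely what the paper does via \eqref{limit1}, \eqref{limit2} and \eqref{q*}. What your route buys is brevity and reliance on standard theory; what the paper's route buys is self-containedness and the explicit link between the convergents and the quadrature measures $\psi_n$ built in Section 2, which makes transparent that the limiting measure is exactly the $\mu$ of Remark \ref{remark:2.1} rather than some other representing measure. One small remark: your worry about needing \emph{locally uniform} convergence in \eqref{limit1} to pass the limit inside the quotient is unnecessary for the pointwise identity claimed in the theorem. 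Since the same normalizing constant $C_n=\Gamma(\nu+n)/(u/2)^{\nu+n-1}$ multiplies numerator and denominator and cancels identically for each fixed $n$, pointwise convergence of the two normalized sequences together with the non-vanishing of $J_\nu'(1/x)$ (which you correctly establish from the reality of the zeros of $J_\nu'$ for $\nu>0$ and $|j_{\nu,k}'|\ge j_{\nu,1}'$) already gives convergence of the ratio at each admissible $x$; uniformity would only be needed to conclude analyticity of the limit or similar, which the statement does not require.
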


\begin{proof}
Let $x_{n,1}<\cdots<x_{n,n}$ be the zeros of $q_{n,\nu}$. Let us consider the partial fraction expansion
\begin{align*}
    \frac{q_{n,\nu}^*(x)}{q_{n,\nu}(x)} = \sum_{k=1}^n \frac{c_k}{x-x_{n,k}}.
\end{align*}
To determine $c_k$'s, we consider the closed contour $\gamma_k$ whose interior contains only $x_{n,k}$ for each $1\le k\le n$, and thus by taking contour integral both sides, it follows
\begin{equation*}
    c_k = \frac{q_{n,\nu}^*(x_{n,k})}{q_{n,\nu}'(x_{n,k})}.
\end{equation*}
We now examine the value of $q_{n,\nu}^*(x_{n,k})$. It is immediate from \eqref{2nd} that
\begin{equation*}
    q_{n,\nu}^*(x_{n,k}) = - \int_\R \frac{q_{n,\nu}(y)}{x_{n,k}-y}d\mu(y).
\end{equation*}
On the other hand, by substituting $n\mapsto n-1$ and $x=x_{n,k}$ in \eqref{CD1}, we find that
\begin{align*}
    \sum_{m=0}^{n-1}  \epsilon_m \frac{q_{m,\nu}(x_{n,k})q_{m,\nu}(y)}{\lambda_m}
    = -\frac{q_{n,\nu}(y)q_{n-1,\nu}(x_{n,k})}{\lambda_{n-1}(x_{n,k}-y)}. 
\end{align*}
Integrating both sides with respect to the orthogonality measure $\mu$, it follows
\begin{equation*}
    \int_\R \frac{q_{n,\nu}(y)}{x_{n,k}-y}d\mu(y) = -\frac{\lambda_{n-1}}{q_{n-1,\nu}(x_{n,k})}.
\end{equation*}
Hence we obtain $c_k = {\lambda_{n-1}}/\qb{q_{n,\nu}'(x_{n,k})q_{n-1,\nu}(x_{n,k})} = \rho(x_{n,k})$, leading to
\begin{equation*}
    \frac{q_{n,\nu}^*(x)}{q_{n,\nu}(x)} = \sum_{k=1}^n \frac{\rho(x_{n,k})}{x-x_{n,k}} = \int_\R \frac{d\psi_n(t)}{x-t}.
\end{equation*}
Regarding Remark \ref{remark:2.1} (ii) and (iii) (see also \cite[Theorem 1.2.1]{Ismail1}), there exists a subsequence of functions $\{\psi_{n_j}\}_{j=1}^\infty$ of $\cb{\psi_n}_{n=1}^\infty$ such that
\begin{equation*}
    \lim_{j\to \infty} \int_\R \frac{d\psi_{n_j}(t)}{x-t} = \int_{-1/j_{\nu,1}'}^{1/j_{\nu,1}'} \frac{d\mu(t)}{x-t}.
\end{equation*}
Therefore, by using \eqref{limit1}, \eqref{limit2} and \eqref{q*}, we conclude the completeness of the proof.
\end{proof}

Theorem \ref{thm:2.3} demonstrates that the Stieltjes transform of $\mu$ serves as a meromorphic function $2\nu J_{\nu}(1/x)/J_{\nu}'(1/x)$ on $\R\setminus\cb{0}$. Therefore, the Perron-Stieltjes inversion formula enables us to derive the explicit form of the orthogonality measure.

\begin{theorem}\label{thm:2.4}
    Let $\nu>0$. The orthogonality measure $\mu$ with respect to $\cb{q_{n,\nu}}_{n=0}^\infty$ is purely discrete. Moreover, $\mu$ is supported only on $\big\{\pm 1/j_{\nu,k}'\big\}_{k=1}^\infty$ with
    \begin{equation}\label{mass}
        \mu\rb{ \{1/j_{\nu,k}'\}} = \mu\rb{ \{ - 1/j_{\nu,k}'\}} = \frac{2\nu}{\big(j_{\nu,k}'\big)^2-\nu^2}.
    \end{equation}
\end{theorem}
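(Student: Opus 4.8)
The plan is to recover $\mu$ from its Cauchy--Stieltjes transform, which Theorem~\ref{thm:2.3} has already identified in closed form. Write $S_\mu(x)=\int_\R d\mu(t)/(x-t)$ and $F(x)=2\nu J_\nu(1/x)/J_\nu'(1/x)$, so that $S_\mu=F$ on $\R\setminus\qb{-1/j_{\nu,1}',1/j_{\nu,1}'}$ by Theorem~\ref{thm:2.3}. First I would observe that $F$ is meromorphic on $\mathbb{C}\setminus\cb{0}$, being the quotient of $x\mapsto J_\nu(1/x)$ and $x\mapsto J_\nu'(1/x)$; hence, by analytic continuation from the exterior region, $S_\mu$ itself extends to the meromorphic function $F$. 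The Perron--Stieltjes inversion formula then says that the density of the absolutely continuous part of $\mu$ is $-\pi^{-1}\operatorname{Im}F(x+i0)$ and that any continuous singular part must be carried where $F$ fails to have real-analytic boundary values. Since $\nu$ is real, $J_\nu$ and $J_\nu'$ are real on the real axis, so $F$ is real and real-analytic on each component of $\R\setminus\rb{\cb{0}\cup\cb{\pm1/j_{\nu,k}'}}$; thus $\operatorname{Im}F(x+i0)=0$ there and no continuous part can survive. This forces $\mu$ to be purely discrete with atoms located exactly at the poles of $F$.

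Next I would locate and weigh the poles. A pole of $F$ occurs precisely where $J_\nu'(1/x)=0$, i.e.\ at $1/x=\pm j_{\nu,k}'$, giving $x=\pm1/j_{\nu,k}'$; these are simple because the zeros of $J_\nu'$ are simple and $J_\nu,J_\nu'$ share no positive zeros, so the numerator does not vanish there. Because $S_\mu(x)=\sum_k m_k/(x-t_k)$ for a discrete $\mu=\sum_k m_k\delta_{t_k}$, each atom mass equals a residue, $\mu(\cb{t_k})=\operatorname{Res}_{x=t_k}F(x)$. To evaluate $\operatorname{Res}_{x=1/j_{\nu,k}'}F$, set $u_0=j_{\nu,k}'$ and use $\tfrac{d}{dx}J_\nu'(1/x)=-x^{-2}J_\nu''(1/x)$, which gives the residue $-2\nu J_\nu(u_0)/\bigl(u_0^2 J_\nu''(u_0)\bigr)$. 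Substituting Bessel's equation $u^2J_\nu''+uJ_\nu'+(u^2-\nu^2)J_\nu=0$ at $u_0$, where $J_\nu'(u_0)=0$, yields $u_0^2J_\nu''(u_0)=-(u_0^2-\nu^2)J_\nu(u_0)$ and hence the residue $2\nu/\bigl((j_{\nu,k}')^2-\nu^2\bigr)$, which is exactly the claimed mass in \eqref{mass} (and is positive, since $j_{\nu,k}'>\nu$ for $\nu>0$). The identical computation at $u_0=-j_{\nu,k}'$, the residue depending only on $u_0^2$, or alternatively the evenness of $\mu$ coming from the parity of $q_{n,\nu}$ forced by the symmetric recurrence \eqref{q Recur}--\eqref{q Recur2}, gives the same mass at $-1/j_{\nu,k}'$.

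The main obstacle, and the step demanding the most care, is the point $x=0$: the atoms $\pm1/j_{\nu,k}'$ accumulate there as $j_{\nu,k}'\to\infty$, and $F$ has an essential singularity at $0$, so the meromorphy argument alone does not exclude an atom there. I would resolve this by testing the atom through $\mu(\cb{0})=\lim_{\epsilon\to0^+}i\epsilon\,F(i\epsilon)$ and using the large-argument behaviour of $J_\nu$ and $J_\nu'$ along the imaginary axis, where the ratio $J_\nu/J_\nu'$ remains bounded, to conclude $\mu(\cb{0})=0$. As an independent consistency check I would match total masses: from $J_\nu(u)/J_\nu'(u)\sim u/\nu$ as $u\to0$ one gets $F(x)\sim 2/x$ as $x\to\infty$, so $\mu(\R)=2$, in agreement with $\int_\R d\mu=\lambda_0/\epsilon_0=2$ from \eqref{ortho}; this confirms that the atoms at $\pm1/j_{\nu,k}'$ carry all of the mass and that $0$ is not itself an atom.
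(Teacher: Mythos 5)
Your proposal is correct, and its skeleton coincides with the paper's: identify the Stieltjes transform of $\mu$ with $F(x)=2\nu J_\nu(1/x)/J_\nu'(1/x)$ via Theorem \ref{thm:2.3}, apply Perron--Stieltjes inversion to conclude that $\mu$ is purely discrete with atoms at the simple poles $\pm 1/j_{\nu,k}'$, and evaluate each mass as a residue, simplified through Bessel's equation $u^2J_\nu''(u)=-(u^2-\nu^2)J_\nu(u)$ at a zero $u$ of $J_\nu'$ (using that $J_\nu$ and $J_\nu'$ have no common zeros). Where you genuinely depart from the paper is at the origin, which you rightly single out as the one point the meromorphy argument cannot reach, since the atoms accumulate there and $F$ has an essential singularity. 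The paper settles this with a moment-theoretic criterion (Ismail, Theorem 2.5.6): an atom at $0$ exists if and only if $\sum_{n\ge 1} q_{n,\nu}^2(0)/\lambda_n<\infty$, and the recurrence \eqref{q Recur} gives $q_{2k,\nu}(0)=(-1)^k\beta_1\beta_3\cdots\beta_{2k-1}$, so the series equals $\sum_{k\ge1}(\nu+2k)/\nu$, which diverges. You instead stay entirely inside the transform framework: $\mu(\{0\})=\lim_{\epsilon\to 0^+} i\epsilon\, S_\mu(i\epsilon)$ by dominated convergence, $S_\mu=F$ on the upper half-plane by the identity theorem (both are holomorphic off $[-1/j_{\nu,1}',\,1/j_{\nu,1}']$ and agree on the real exterior by Theorem \ref{thm:2.3}), and $F(i\epsilon)$ remains bounded as $\epsilon\to0^+$ because $J_\nu(\pm iy)/J_\nu'(\pm iy)=\pm i\, I_\nu(y)/I_\nu'(y)\to \pm i$ as $y\to\infty$; hence $i\epsilon F(i\epsilon)\to 0$. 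Both treatments are complete; yours avoids citing the external moment criterion and the computation of $q_{n,\nu}(0)$, at the price of modified-Bessel asymptotics, and it is arguably more self-contained. One caution: your closing remark that the total-mass computation ($F(x)\sim 2/x$, so $\mu(\mathbb{R})=2=\lambda_0/\epsilon_0$) ``confirms'' that $0$ carries no mass overstates what that check shows --- by itself it cannot separate the atoms' total from $2$ without summing the residues, which is precisely \eqref{S0} and is derived in the paper only \emph{after} this theorem --- so the limit formula must remain the actual argument, as indeed you have it.
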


\begin{proof}
Let $\nu>0$ be fixed. From Theorem \ref{thm:2.3}, we define
\begin{equation*}
    S(x) := \frac{2\nu J_\nu(1/x)}{J_\nu'(1/x)} = \int_{-1/j_{\nu,1}'}^{1/j_{\nu,1}'} \frac{d\mu(t)}{x-t}.
\end{equation*}
Since, with the exception of the origin, $J_\nu$ and $J_\nu'$ do not have zeros in common (see \cite[\S 15.21]{Watson}) and $J_\nu'$ has only real zeros $\big\{ \pm j_{\nu,k}' \big\}_{k=1}^\infty$ for $\nu>0$, the function $S(x)$ is meromorphic on $\R\setminus\cb{0}$ with simple poles at $x= \pm 1/j_{\nu,k}'$, $k\in\mathbb{N}$, along with an essential singularity at $x=0$. Moreover, the function $S(x)$ stands for Stieltjes transform of orthogonality measure $\mu$. According to Perron-Stieltjes inversion formula (see \cite[Remark 1.2.1]{Ismail1})
\begin{equation*}
    \mu(t) -\mu(s) = \lim_{\epsilon \to 0+} \int_s^t \frac{S(x-i\epsilon)-S(x+i\epsilon)}{2\pi i}dx,
\end{equation*}
it is apparent that $\mu$ is purely discrete measure supported on a compact set. Furthermore, it turns out that $\mu$ has an isolated atom at a pole of $S(x)$, and the residue at this pole corresponds to the mass of the atom. In particular, when considering the signs $\pm$ in the same order, we have
\begin{equation*}
    \mu\rb{ \{\pm 1/j_{\nu,k}'\}} = \text{Res}\qb{ \frac{2\nu J_\nu(1/x)}{J_\nu'(1/x)}\,;\, x=\pm 1/j_{\nu,k}' } = -\frac{2\nu J_\nu(\pm j_{\nu,k}')}{\big(j_{\nu,k}'\big)^2 J_\nu''(\pm j_{\nu,k}')}.
\end{equation*}
This implies \eqref{mass}, as the differential equation for the Bessel functions $J_\nu$ yields $(j_{\nu,k}')^2 J_\nu''(\pm j_{\nu,k}') = -\big((j_{\nu,k}')^2-\nu^2\big) J_\nu(\pm j_{\nu,k}')$.

To determine the mass of $\mu$ at $x=0$, we refer readers to \cite[Theorem 2.5.6]{Ismail1} and references therein. This theorem states $\mu$ has an atom at $x=0$ if and only if the series $\sum_{n=1}^\infty {q_{n,\nu}^2(0)}/{\lambda_n}$ converges. It follows from \eqref{q Recur} that $q_{2k-1,\nu}(0)=0$ and $q_{2k,\nu}(0)= (-1)^k \beta_1\beta_3\cdots \beta_{2k-1}$. Consequently, we find
\begin{equation*}
    \sum_{k=1}^\infty \frac{q_{k,\nu}^2(0)}{\lambda_k} = \sum_{k=1}^\infty \frac{\beta_1\beta_3\cdots \beta_{2k-1}}{\beta_2\beta_4\cdots \beta_{2k}} = \sum_{k=1}^\infty \frac{\nu+2k}{\nu},
\end{equation*}
which diverges. Hence, we have $\mu(\cb{0})=0$, and the proof is complete.
\end{proof}

According to Theorem \ref{thm:2.4}, the identity \eqref{ortho} can be reformulated as:

\begin{corollary}\label{coro:2.2}
Let $\nu>0$. Then we have
\begin{equation*}
    \sum_{k\in \mathbb{Z}\setminus\cb{0}} \frac{2\nu}{\big(j_{\nu,k}'\big)^2-\nu^2}\,q_{j,\nu}(1/j_{\nu,k}')q_{s,\nu}(1/j_{\nu,k}') = \frac{\lambda_j}{\epsilon_j} \delta_{j,s},
\end{equation*}
for nonnegative integers $j,s$.
\end{corollary}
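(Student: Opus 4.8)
The plan is to combine the abstract orthogonality relation \eqref{ortho} from Remark \ref{remark:2.1}(i) with the explicit description of the orthogonality measure $\mu$ obtained in Theorem \ref{thm:2.4}. The corollary is essentially a bookkeeping step: once we know that $\mu$ is purely discrete and supported on $\bigl\{\pm 1/j_{\nu,k}'\bigr\}_{k=1}^\infty$, the integral in \eqref{ortho} collapses into a sum over the atoms. So the strategy is to substitute the mass formula \eqref{mass} into the integral representation of the inner product and relabel the index set.

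First I would start from \eqref{ortho}, namely $\int_\R q_{j,\nu}(t)q_{s,\nu}(t)\,d\mu(t) = (\lambda_j/\epsilon_j)\,\delta_{j,s}$, which holds for all nonnegative integers $j,s$ whenever $\nu>0$. Next I would invoke Theorem \ref{thm:2.4} to rewrite the integral as a discrete sum. Since $\mu$ is purely atomic with support exactly $\bigl\{\pm 1/j_{\nu,k}'\bigr\}_{k=1}^\infty$, for any integrand $g$ we have $\int_\R g\,d\mu = \sum_{k=1}^\infty \bigl[ g(1/j_{\nu,k}')\,\mu(\{1/j_{\nu,k}'\}) + g(-1/j_{\nu,k}')\,\mu(\{-1/j_{\nu,k}'\}) \bigr]$. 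Applying this with $g(t)=q_{j,\nu}(t)q_{s,\nu}(t)$ and inserting the common mass value $2\nu/\bigl((j_{\nu,k}')^2-\nu^2\bigr)$ from \eqref{mass} gives the positive- and negative-index contributions with the same weight. Finally I would merge the two families of atoms using the symmetry convention $j_{\nu,-k}'=-j_{\nu,k}'$ introduced in the Introduction, so that the sum over $k\in\mathbb{N}$ together with its reflection becomes a single sum over $k\in\mathbb{Z}\setminus\{0\}$, yielding exactly the stated identity.

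I do not anticipate a serious analytic obstacle here, since all the hard work—establishing that $\mu$ is discrete and computing its masses—was already done in Theorem \ref{thm:2.4}. The only point requiring a little care is the convergence of the resulting series, which must be absolute so that the reindexing over $\mathbb{Z}\setminus\{0\}$ and the splitting of the integral are legitimate; this is guaranteed because $\mu$ is a (finite) orthogonality measure of bounded support and $q_{j,\nu},q_{s,\nu}$ are bounded polynomials on that compact support, so the integrand is $\mu$-integrable. A secondary point to verify is that the mass at $x=0$ vanishes, but this was explicitly checked at the end of the proof of Theorem \ref{thm:2.4}, so no atom is lost when passing from the integral to the sum over nonzero $k$. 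With these remarks in place the proof reduces to the substitution and relabeling described above.
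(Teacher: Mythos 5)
Your proposal is correct and matches the paper's own treatment: the paper presents Corollary \ref{coro:2.2} as an immediate reformulation of the orthogonality relation \eqref{ortho} using Theorem \ref{thm:2.4}, i.e., substituting the purely discrete measure with atoms \eqref{mass} at $\pm 1/j_{\nu,k}'$ and reindexing over $\mathbb{Z}\setminus\{0\}$ via $j_{\nu,-k}'=-j_{\nu,k}'$, exactly as you describe. Your additional remarks on absolute convergence and the absence of an atom at the origin are sound and consistent with the checks already carried out in the proof of Theorem \ref{thm:2.4}.
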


\section{Orthogonal polynomials \texorpdfstring{$p_{n,\nu}$}{p n nu} with Rayleigh-type sum as its moments}

Regarding Theorem \ref{thm:2.4}, for fixed $\nu>0$, we introduce a moment functional $\mathcal{I}$ associated with the orthogonality measure $\mu$ as follows
\begin{equation*}
    \mathcal{I}(f) = \int_\R f(x) d\mu(x) = \sum_{k\in \mathbb{Z}\setminus\cb{0}} \frac{2\nu}{\big(j_{\nu,k}'\big)^2-\nu^2}\,f(1/j_{\nu,k}').
\end{equation*}
Given the fact (see \cite[\S 15.3]{Watson})
\begin{equation}\label{ineq}
    \nu<j_{\nu,1}'<j_{\nu,2}'<\cdots \for \nu>0,
\end{equation}
it is evident that $\mathcal{I}$ is positive definite for $\nu>0$. Consequently, $\mathcal{I}$ produces the unique (up to nonzero factors) sequence of orthogonal polynomials (see \cite[\S 2, 3]{Chihara}), precisely $\cb{q_{n,\nu}}_{n=1}^\infty$, as detailed in Corollary \ref{coro:2.2}.

Of particular interest is the moment functional, for fixed $\nu>0$, defined as
\begin{equation*}
    \mathcal{L}(f) = \frac{1}{2\nu} \int_\R f(x)(1-\nu^2 x^2) d\mu(x) = \sum_{k\in \mathbb{Z}\setminus\cb{0}} \frac{1}{\big(j_{\nu,k}'\big)^2}\,f(1/j_{\nu,k}'),
\end{equation*}
whose moment $\mu_n$ of order $n$ corresponds to the Rayleigh-type sum with respect to the zeros of $J_\nu'$. To be specific, we have $\mu_n := \mathcal{L}(x^n) = \sigma_\nu'(n+2)$, 
for each $n\in \mathbb{N}\cup \cb{0}$. It is worth noting that, owing to \cite[Corollary 3]{Kytmanov}, the Rayleigh-type sum $\sigma_\nu'(n)$ can be computed as
\begin{equation}\label{RL2}
    \sigma_\nu'(n) = (-1)^n \dm{ c_1 & c_0 & 0 & \cdots & 0\\
    2 c_2 & c_1 & c_0 & \cdots & 0 \\
    3 c_3 & c_2 & c_1 & \cdots & 0 \\
    \vdots & \vdots & \vdots & \ddots & \vdots\\
    n c_{n} & c_{n-1} & c_{n-2} & \cdots & c_1},\quad n=2,3,\cdots,
\end{equation}
where $c_n$ denotes the coefficient of $x^n$ in the \emph{normalized} Bessel derivative, given by
\begin{equation}\label{series}
    2^\nu \Gamma(\nu)x^{1-\nu}J_\nu'(x) = \sum_{k=0}^\infty \frac{(-1)^k(\nu/2+1)_k}{k! 4^k (\nu/2)_k (\nu+1)_k} x^{2k}.
\end{equation}
More precisely, $c_{2k+1} = 0$ and $c_{2k} = (-1)^k(\nu/2+1)_k/[k! 4^k (\nu/2)_k (\nu+1)_k ] $ for each $k\in\mathbb{N}\cup\cb{0}$. 
\begin{remark}\label{rem:3.1}
Following the notation in \cite{Sneddon}, we put
\begin{equation*}
    S_{2n,\nu}' = \sum_{k=1}^\infty \frac{1}{\big(j_{\nu,k}'\big)^{2n}\big[\big(j_{\nu,k}'\big)^2-\nu^2\big]} = \frac{1}{4\nu}\mathcal{I}\rb{x^{2n}}
\end{equation*}
for $n=0,1,\cdots$.
Applying Corollary \ref{coro:2.2} with $j=s=0$, it follows
\begin{equation}\label{S0}
    2S_{0,\nu}' = \sum_{k\in \mathbb{Z}\setminus\cb{0}} \frac{1}{\big(j_{\nu,k}'\big)^2-\nu^2} = \frac{1}{\nu} \for \nu >0.
\end{equation}
Meanwhile, one may compute $\mathcal{L}(x^{2n-2}) = \sigma_\nu'(2n)$ via \eqref{RL2} and \eqref{series}. For instance, we obtain that
\begin{equation}\label{sigma}
    \begin{split}
        \sigma_\nu'(2) &= \frac{\nu+2}{2\nu(\nu+1)},\quad \sigma_\nu'(4) = \frac{\nu^2+8\nu + 8}{8\nu^2(\nu+1)^2(\nu+2)},\\
        \sigma'_\nu(6)&=\frac{\nu^3+16\nu^2+38\nu+24}{16\nu^3(\nu+1)^3(\nu+2)(\nu+3)}.
    \end{split}
\end{equation}
Moreover, an elementary identity $\sigma_\nu'(2n) = 2S_{2n-2,\nu}'-2\nu^2 S_{2n,\nu}'$, along with \eqref{RL2}, \eqref{S0}, gives $S_{2m,\nu}'$ for $m\ge1$ in an iterative manner. Note that there is a typographical error in \cite{Sneddon}; the equation (76) should read
\begin{equation*}
    S'_{6,\nu}=\frac{5\nu^2+16\nu+12}{32\nu^4(\nu+1)^3(\nu+2)(\nu+3)} = \frac{5\nu+6}{32\nu^4(\nu+1)^3(\nu+3)}.
\end{equation*}

\end{remark}

As $j_{\nu,k}'>\nu$ for $\nu>0$ and $k\ge1$, it is evident that the weight $1-\nu^2x^2$ remains positive over $\text{supp}(\mu) = \{ \pm 1/j_{\nu,k}' \}_{k=1}^\infty$, leading that $\mathcal{L}$ is positive definite. We observe that $\mathcal{L}$ being positive definite ensures the existence of a monic sequence of orthogonal polynomials with respect to $\mathcal{L}$, denoted as $\cb{p_{n,\nu}}_{n=0}^\infty$. Additionally, $\mathcal{L}$ is symmetric, implying $\mu_n = 0$ for odd $n$. The objective of this section is to elucidate explicit forms of the sequence $\cb{p_{n,\nu}}_{n=0}^\infty$ and its recurrence relation.

\begin{theorem} \label{thm:3.1}
Let $\nu>0$, and let $\cb{p_{n,\nu}}_{n=0}^\infty$ be a monic sequence of orthogonal polynomials with respect to $\mathcal{L}$. We have
\begin{equation}\label{p poly}
    p_{n,\nu}(x) = \frac{2}{q_{n,\nu}(1/\nu)}\frac{ q_{n,\nu}(1/\nu)q_{n+2,\nu}(x)- q_{n,\nu}(x)q_{n+2,\nu}(1/\nu)}{x^2-\nu^{-2}} ,
\end{equation}
for $n\in \mathbb{N}\cup \cb{0}$.

\end{theorem}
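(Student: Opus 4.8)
The plan is to exploit the fact that $\mathcal{L}$ is a quadratic polynomial modification of the functional $\mathcal{I}$, whose orthogonal polynomials $\{q_{n,\nu}\}$ are already understood via Corollary \ref{coro:2.2}. From the definitions one has $\mathcal{L}(f) = \frac{1}{2\nu}\mathcal{I}\!\left((1-\nu^2x^2)f\right)$, and since $1-\nu^2x^2 = -\nu^2(x^2-\nu^{-2}) = -\nu^2(x-\nu^{-1})(x+\nu^{-1})$, this is a Christoffel-type transform of $\mathcal{I}$ by a quadratic with roots $\pm\nu^{-1}$. Because $\mathcal{L}$ is positive definite, its monic orthogonal polynomial of each degree is unique, so it suffices to verify that the right-hand side of \eqref{p poly}, call it $\widetilde p_n(x)$, is a monic polynomial of degree $n$ satisfying $\mathcal{L}(x^m\widetilde p_n)=0$ for $0\le m\le n-1$. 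Throughout I will use the parity relation $q_{n,\nu}(-x)=(-1)^n q_{n,\nu}(x)$, which follows by induction from the symmetric recurrence \eqref{q Recur}--\eqref{q Recur2}: the base cases $q_{0,\nu}=1$ and $q_{1,\nu}=x/2$ have parities $+,-$, and each recurrence step preserves the alternation.

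First I would confirm that $\widetilde p_n$ is genuinely a polynomial of degree $n$. Writing $g_n(x)=q_{n,\nu}(\nu^{-1})q_{n+2,\nu}(x)-q_{n,\nu}(x)q_{n+2,\nu}(\nu^{-1})$ for the numerator, evaluation at $x=\nu^{-1}$ gives $g_n(\nu^{-1})=0$ at once, while evaluation at $x=-\nu^{-1}$ gives $g_n(-\nu^{-1})=0$ after applying the parity relation to $q_{n,\nu}(-\nu^{-1})$ and $q_{n+2,\nu}(-\nu^{-1})$, both of which pick up the same sign $(-1)^n$. Hence $x^2-\nu^{-2}$ divides $g_n$, so $\widetilde p_n$ is a polynomial. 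For degree and normalization, the recurrence shows $q_{n,\nu}$ has leading coefficient $\tfrac12$ for $n\ge 1$, so $g_n$ has degree $n+2$ with leading coefficient $\tfrac12\,q_{n,\nu}(\nu^{-1})$ (the second summand contributes only degree $n$); dividing by $x^2-\nu^{-2}$ and multiplying by $2/q_{n,\nu}(\nu^{-1})$ then yields a monic polynomial of degree $n$.

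For orthogonality, the clean point is that multiplying by $x^2-\nu^{-2}$ cancels the denominator, giving $(x^2-\nu^{-2})\widetilde p_n(x) = \frac{2}{q_{n,\nu}(\nu^{-1})}g_n(x) = 2q_{n+2,\nu}(x)-\frac{2q_{n+2,\nu}(\nu^{-1})}{q_{n,\nu}(\nu^{-1})}q_{n,\nu}(x)$. Then for any polynomial $\pi$ with $\deg\pi\le n-1$,
\begin{align*}
    \mathcal{L}(\pi\,\widetilde p_n) &= \tfrac{1}{2\nu}\,\mathcal{I}\!\left(-\nu^2(x^2-\nu^{-2})\pi\,\widetilde p_n\right) = -\tfrac{\nu}{2}\,\mathcal{I}\!\left(\pi\cdot(x^2-\nu^{-2})\widetilde p_n\right)\\
    &= -\nu\,\mathcal{I}(\pi\,q_{n+2,\nu}) + \tfrac{\nu\,q_{n+2,\nu}(\nu^{-1})}{q_{n,\nu}(\nu^{-1})}\,\mathcal{I}(\pi\,q_{n,\nu}),
\end{align*}
and both terms vanish since $q_{n,\nu}$ and $q_{n+2,\nu}$ are orthogonal with respect to $\mathcal{I}$ to every polynomial of degree below $n$. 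Uniqueness of monic orthogonal polynomials then identifies $\widetilde p_n=p_{n,\nu}$, establishing \eqref{p poly}.

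The step requiring the most care is the divisibility and leading-coefficient bookkeeping of the second paragraph: the vanishing of $g_n$ at $x=-\nu^{-1}$ genuinely relies on the parity of $q_{n,\nu}$ (equivalently, on the symmetry of $\mathcal{I}$), and it is precisely this symmetry that makes the intermediate polynomial $q_{n+1,\nu}$ drop out, leaving only $q_{n,\nu}$ and $q_{n+2,\nu}$. Recognizing the transform structure $2\nu\,\mathcal{L}(f)=\mathcal{I}\!\left((1-\nu^2x^2)f\right)$ is what collapses the entire computation; once it is in hand, the orthogonality is essentially immediate.
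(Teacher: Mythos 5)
Your proof is correct, but it follows a genuinely different route from the paper's. The paper obtains \eqref{p poly} constructively, by invoking the quadratic Christoffel-transform argument of \cite[Theorem 2.7.1]{Ismail1} for the multiplier $x^2-\nu^{-2}$ (noting it is negative on $\mathrm{supp}(\mu)$, so the cited reasoning must be adapted to $\int_\R (x^2-\nu^{-2})\,d\mu<0$): this yields a $3\times 3$ determinantal expression in $q_{n,\nu}$, $q_{n+1,\nu}$, $q_{n+2,\nu}$ evaluated at the two roots $\pm 1/\nu$, and the stated two-term formula then follows from exactly the parity relations you use, which annihilate the $q_{n+1,\nu}$ column. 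You instead verify the already-collapsed formula directly: divisibility of the numerator by $x^2-\nu^{-2}$ via parity, the degree and monicity count, $\mathcal{L}$-orthogonality to lower degrees by unwinding $2\nu\,\mathcal{L}(f)=\mathcal{I}\big((1-\nu^2x^2)f\big)$ and using the $\mathcal{I}$-orthogonality of $q_{n,\nu}$ and $q_{n+2,\nu}$, and finally uniqueness of the monic orthogonal sequence for the positive-definite $\mathcal{L}$. In effect you re-prove the needed special case of Christoffel's theorem rather than cite it; what this buys is a self-contained argument that sidesteps the delicate sign issue in applying the cited theorem (your route only ever uses positivity of $\mathcal{L}$ itself), while the paper's route is shorter and shows where the formula comes from rather than pulling it out of thin air for verification. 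One small point that you (and, implicitly, the paper) should make explicit, since your leading-coefficient step divides by it and the formula is meaningless without it: $q_{n,\nu}(1/\nu)\ne 0$ for $\nu>0$. This follows from \eqref{ineq} together with Remark \ref{remark:2.1} (iii), because the zeros of $q_{n,\nu}$ lie inside the convex hull $\qb{-1/j_{\nu,1}',\,1/j_{\nu,1}'}$ of $\mathrm{supp}(\mu)$, whereas $1/\nu>1/j_{\nu,1}'$; one sentence to this effect would close the only gap in your write-up.
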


\begin{proof}
It is obvious from \eqref{ineq} and Theorem \ref{thm:2.4} that the weight function $x^2-\nu^{-2}$ is strictly negative on the support of $d\mu(x)$ when $\nu>0$.
Applying the same reasoning in the proof of \cite[Theorem 2.7.1]{Ismail1} with $\int_\R (x^2-\nu^{-2})d\mu(x)<0$, we deduce that
\begin{equation*}
        p_{n,\nu}(x) = \frac{C_n}{x^2-\nu^{-2}} \dm{ q_{n,\nu}(1/\nu) & q_{n+1,\nu}(1/\nu) & q_{n+2,\nu}(1/\nu) \\
        q_{n,\nu}(-1/\nu)  & q_{n+1,\nu}(-1/\nu) & q_{n+2,\nu}(-1/\nu)\\
        q_{n,\nu}(x) & q_{n+1,\nu}(x) & q_{n+2,\nu}(x)},
\end{equation*}
where $\cb{q_{n,\nu}}_{n=0}^\infty$ denotes a sequence of orthogonal polynomials with respect to $\mu(x)$ and 
\begin{equation*}
    C_n = \frac{1}{2}\dm{ q_{n,\nu}(1/\nu) & q_{n+1,\nu}(1/\nu) \\
q_{n,\nu}(-1/\nu) & q_{n+1,\nu}(-1/\nu) }.
\end{equation*}
In view of $q_{2k,\nu}(x) = q_{2k,\nu}(-x)$ and $q_{2k+1,\nu}(x) = - q_{2k+1,\nu}(-x)$, $k = 0,1,\cdots$, the desired result is immediate.

\end{proof}

\begin{theorem}\label{thm:3.2}
Let $\nu>0$. The sequence of orthogonal polynomials $\cb{p_{n,\nu}}_{n=0}^\infty$, as defined above, satisfies the three-term recurrence relation as follows
\begin{equation}\label{p recur}
    p_{n,\nu}(x)-xp_{n-1,\nu}(x)+\gamma_n p_{n-2,\nu}(x)=0, \quad n=1,2,\cdots,
\end{equation}
where we define $p_{-1,\nu}(x) = 0$, $\gamma_1 = (\nu+2)/[2\nu(\nu+1)]$ and
\begin{equation}\label{p coeef}
    \gamma_n = \frac{1}{4(\nu+n-1)(\nu+n-2)}\frac{q_{n+1,\nu}(1/\nu)q_{n-2,\nu}(1/\nu)}{q_{n,\nu}(1/\nu)q_{n-1,\nu}(1/\nu)},
\end{equation}
for $n=2,3,\cdots$, with initial conditions $p_{0,\nu}(x)=1$ and $p_{1,\nu}(x)=x$. 
\end{theorem}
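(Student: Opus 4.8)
The plan is to separate the \emph{shape} of the recurrence, which is automatic, from the \emph{value} of $\gamma_n$, which requires a computation of the normalization constants $h_m := \mathcal{L}(p_{m,\nu}^2)$. Since $\mathcal{L}$ is positive definite and symmetric (so $\mu_n=0$ for odd $n$), the general theory of orthogonal polynomials for a symmetric moment functional (\cite[Theorems 4.1--4.3]{Chihara}, \cite[\S2.1]{Ismail1}) guarantees that the monic sequence $\{p_{n,\nu}\}$ satisfies a three-term recurrence $p_{n,\nu}(x)=(x-c_n)p_{n-1,\nu}(x)-\gamma_n p_{n-2,\nu}(x)$ in which symmetry forces $c_n=0$ and positivity forces $\gamma_n>0$; this is precisely the form \eqref{p recur}. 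The same theory identifies $\gamma_n=h_{n-1}/h_{n-2}$ for $n\ge 2$ and $\gamma_1=h_0=\mathcal{L}(1)=\mu_0$. Because $\mu_0=\sigma_\nu'(2)$, the moment computation recorded in \eqref{sigma} immediately yields $\gamma_1=(\nu+2)/[2\nu(\nu+1)]$, matching the stated initial value.

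The crux is thus to evaluate $h_n$ in closed form. I would start from \eqref{p poly} and clear the denominator, which produces the clean identity
\[
(x^2 - \nu^{-2})\,p_{n,\nu}(x) = 2\,q_{n+2,\nu}(x) - \frac{2\,q_{n+2,\nu}(1/\nu)}{q_{n,\nu}(1/\nu)}\,q_{n,\nu}(x).
\]
Writing $\mathcal{L}(f)=-\tfrac{\nu}{2}\int_\R (x^2-\nu^{-2})f\,d\mu$ and pairing this identity against $p_{n,\nu}$ gives $h_n=-\tfrac{\nu}{2}\int_\R (x^2-\nu^{-2})p_{n,\nu}^2\,d\mu$, which reduces to two $\mu$-integrals of $q$-polynomials against $p_{n,\nu}$. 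Orthogonality \eqref{ortho} annihilates $\int_\R q_{n+2,\nu}p_{n,\nu}\,d\mu=0$ for degree reasons, while a comparison of leading coefficients (monic $p_{n,\nu}$ against $q_{n,\nu}=\tfrac12 x^n+\cdots$ for $n\ge 1$, using parity to drop intermediate terms) yields $\int_\R q_{n,\nu}p_{n,\nu}\,d\mu=2\lambda_n/\epsilon_n$. This delivers $h_n=2\nu\,q_{n+2,\nu}(1/\nu)\lambda_n/[q_{n,\nu}(1/\nu)\epsilon_n]$ for $n\ge 1$. Forming the ratio $\gamma_n=h_{n-1}/h_{n-2}$ and inserting $\lambda_{n-1}/\lambda_{n-2}=\beta_{n-1}=1/[4(\nu+n-2)(\nu+n-1)]$ together with $\epsilon_m=1$ for $m\ge 1$ collapses the expression exactly into \eqref{p coeef}.

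I expect the main obstacle to be the boundary bookkeeping rather than the central computation. The polynomial $q_{0,\nu}=1$ carries leading coefficient $1$ rather than $\tfrac12$, and $\epsilon_0=\tfrac12$, so the closed form for $h_n$ above breaks at $n=0$: one must compute $h_0=\mathcal{L}(1)=\mu_0$ directly and then verify that $\gamma_2=h_1/h_0$ still reproduces \eqref{p coeef} with the factor $q_{0,\nu}(1/\nu)$ appearing in the numerator, so that the product formula holds uniformly from $n=2$ onward. A secondary point worth recording is positivity: since $1/\nu$ lies strictly to the right of every zero of $q_{m,\nu}$ (the zeros sit in $[-1/j_{\nu,1}',1/j_{\nu,1}']$ by Remark \ref{remark:2.1}(iii), and $\nu<j_{\nu,1}'$ by \eqref{ineq}), one has $q_{m,\nu}(1/\nu)>0$ for every $m$, which confirms both $h_n>0$ and the consistency of $\gamma_n>0$ in \eqref{p recur}.

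As a cross-check I would also carry out the alternative derivation by coefficient extraction: writing $p_{n,\nu}(x)=x^n+e_n x^{n-2}+\cdots$, the recurrence gives $\gamma_n=e_{n-1}-e_n$, and reading $e_n$ off \eqref{p poly} via the $q$-recurrence \eqref{q Recur} (which controls the subleading coefficients $d_m$ of $q_{m,\nu}$) leads, after using $x\,q_{n-1,\nu}(x)=q_{n,\nu}(x)+\beta_{n-1}q_{n-2,\nu}(x)$ at $x=1/\nu$, to the same product form. Agreement of the two routes is a convenient safeguard against sign and indexing slips in the boundary cases.
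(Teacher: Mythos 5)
Your proposal is correct, but it reaches \eqref{p coeef} by a genuinely different route than the paper. The paper begins the same way you do (Chihara's theorems give the symmetric three-term recurrence with $\gamma_1=\sigma_\nu'(2)$), but it then determines $\gamma_n$ for $n\ge2$ by pure polynomial algebra: it multiplies $p_{n,\nu}(x)-xp_{n-1,\nu}(x)$ by $(x^2-\nu^{-2})/2$, substitutes \eqref{p poly}, and applies the recurrence \eqref{q Recur} repeatedly, both in the variable $x$ and at the point $x=1/\nu$, until the expression collapses into $-\tfrac{x^2-\nu^{-2}}{2}\gamma_n\,p_{n-2,\nu}(x)$; no integration against $\mu$ is involved. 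You instead invoke the standard identification $\gamma_n=h_{n-1}/h_{n-2}$ with $h_m=\mathcal{L}(p_{m,\nu}^2)$ and compute the norms in closed form by pairing the cleared-denominator form of \eqref{p poly} against $p_{n,\nu}$ under $d\mu$; your evaluations $\int_\R q_{n+2,\nu}p_{n,\nu}\,d\mu=0$ and $\int_\R q_{n,\nu}p_{n,\nu}\,d\mu=2\lambda_n/\epsilon_n$ (for $n\ge1$) are right, and the ratio does collapse to \eqref{p coeef} using $\lambda_{n-1}/\lambda_{n-2}=\beta_{n-1}$. What your route buys is the closed form $h_n=2\nu\lambda_n\,q_{n+2,\nu}(1/\nu)/q_{n,\nu}(1/\nu)$, which is exactly the quantity the paper needs later for Theorem \ref{thm:3.3} (there one has $\Delta_n/\Delta_{n-1}=\mathcal{L}(p_{n,\nu}^2)=\prod_{k=1}^{n+1}\gamma_k$), so your derivation would streamline the Hankel determinant formula; what the paper's route buys is independence from any integral computation beyond positive definiteness, at the cost of a longer chain of recurrence manipulations. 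The boundary issue you flagged resolves exactly as you anticipated: computing $h_0=\mathcal{L}(1)=\sigma_\nu'(2)$ directly from \eqref{sigma} and $q_{2,\nu}(1/\nu)=(\nu+2)/[4\nu^2(\nu+1)]$ shows that $h_0=2\nu\lambda_0\,q_{2,\nu}(1/\nu)/q_{0,\nu}(1/\nu)$, i.e.\ the closed form persists at $n=0$ because the factor $\lambda_0/\epsilon_0=2$ exactly compensates for $q_{0,\nu}$ having leading coefficient $1$ rather than $\tfrac12$ (one has $\int_\R q_{0,\nu}p_{0,\nu}\,d\mu=\lambda_0/\epsilon_0$, not $2\lambda_0/\epsilon_0$, and the two anomalies cancel); consequently $\gamma_2=h_1/h_0=(\nu^2+8\nu+8)/[4\nu(\nu+1)(\nu+2)^2]$ agrees with \eqref{p coeef}, so the product formula holds uniformly from $n=2$ onward. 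Your positivity remark is also sound: since the zeros of each $q_{m,\nu}$ lie in the convex hull $[-1/j_{\nu,1}',1/j_{\nu,1}']$ of $\mathrm{supp}(\mu)$ by Remark \ref{remark:2.1}(iii) and $1/\nu>1/j_{\nu,1}'$ by \eqref{ineq}, every $q_{m,\nu}(1/\nu)$ is strictly positive, which legitimizes all divisions and confirms $\gamma_n>0$.
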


\begin{proof}
As readily verified, $p_{0,\nu}(x)=1$ and $p_{1,\nu}(x)=x$ follow from Theorem \ref{thm:3.1}.
Since the moment functional $\mathcal{L}$ is positive definite and symmetric for $\nu>0$, by applying \cite[Theorem 4.1 and Theorem 4.3]{Chihara}, there exist $\gamma_n \ne 0$, $n=1,2,\cdots$, such that the three-term recurrence \eqref{p recur} for $\cb{p_{n,\nu}}_{n=0}^\infty$ holds true, where we define $p_{-1,\nu}(x) = 0$ and $\gamma_1 = \sigma_\nu'(2)$. From \eqref{sigma}, we note that $\sigma_\nu'(2) = (\nu+2)/[2\nu(\nu+1)]$.

To determine $\gamma_n$ for $n=2,3,\cdots$, we proceed by using \eqref{p poly} to find that
\begin{align*}
    &\frac{x^2-\nu^{-2}}{2}\big(p_{n,\nu}(x) - xp_{n-1,\nu}(x) \big)\\
    &= q_{n+2,\nu}(x) - xq_{n+1,\nu}(x) - \frac{q_{n+2,\nu}(1/\nu)}{q_{n,\nu}(1/\nu)} q_{n,\nu}(x) + \frac{q_{n+1,\nu}(1/\nu)}{q_{n-1,\nu}(1/\nu)}x q_{n-1,\nu}(x).
\end{align*}
Using \eqref{q Recur} twice, once as itself and once for $x=1/\nu$, we find that
\begin{equation*}
     q_{n+2,\nu}(x) - xq_{n+1,\nu}(x) - \frac{q_{n+2,\nu}(1/\nu)}{q_{n,\nu}(1/\nu)} q_{n,\nu}(x) = -\frac{q_{n+1,\nu}(1/\nu)}{q_{n,\nu}(1/\nu)} \frac{1}{\nu}\, q_{n,\nu}(x),
\end{equation*}
and by applying \eqref{q Recur} again, we have
\begin{multline*}
    \frac{q_{n+1,\nu}(1/\nu)}{q_{n-1,\nu}(1/\nu)}x q_{n-1,\nu}(x) \\
    = \frac{q_{n+1,\nu}(1/\nu)}{q_{n-1,\nu}(1/\nu)} \rb{q_{n,\nu}(x) + \frac{1}{4(\nu+n-1)(\nu+n-2)}q_{n-2,\nu}(x)}.
\end{multline*}
Combining the above identities and applying \eqref{q Recur} with $x=1/\nu$, we obtain
\begin{align*}
    &\frac{x^2-\nu^{-2}}{2}\big(p_{n,\nu}(x) - xp_{n-1,\nu}(x) \big)\\
    &= \frac{q_{n+1,\nu}(1/\nu)}{q_{n,\nu}(1/\nu)q_{n-1,\nu}(1/\nu)} \rb{ q_{n,\nu}(1/\nu) - \frac{1}{\nu}q_{n-1,\nu}(1/\nu)}q_{n,\nu}(x) \hspace{5em}\\
    & \pushright{ + \frac{q_{n+1,\nu}(1/\nu)}{q_{n-1,\nu}(1/\nu)} \frac{1}{4(\nu+n-1)(\nu+n-2)}q_{n-2,\nu}(x)}\\
    &= -\frac{q_{n+1,\nu}(1/\nu)q_{n-2,\nu}(1/\nu)}{q_{n,\nu}(1/\nu)q_{n-1,\nu}(1/\nu)} \frac{1}{4(\nu+n-1)(\nu+n-2)} q_{n,\nu}(x)\\
    & \pushright{ + \frac{q_{n+1,\nu}(1/\nu)}{q_{n-1,\nu}(1/\nu)} \frac{1}{4(\nu+n-1)(\nu+n-2)}q_{n-2,\nu}(x)}\\
    &=-\frac{x^2-\nu^{-2}}{2} \gamma_n\, p_{n-2,\nu}(x),
\end{align*}
in which the last identity follows from \eqref{p poly}.
\end{proof}

Some of examples for $p_{n,\nu}$ and $q_{n,\nu}$ are listed in Table \ref{table1} below:

\begin{table}[H]
    \centering
    \begin{tabular}{c|c|c}
        $n$ & $p_{n,\nu}(x)$ & $q_{n,\nu}(x)$ \\ \hline \hline
        $0$ & $1$ & $1$ \\ [0.5em]
        $1$ & $x$ & $\displaystyle \frac{x}{2}$\\ [1em]
        $2$ & $\displaystyle x^2-\frac{\nu^2+8\nu+8}{4\nu(\nu+1)(\nu+2)^2}$ & $\displaystyle \frac{x^2 }{2}-\frac{1}{4\nu {\left(\nu +1\right)}}$ \\ [1em]
        $3$ & $\displaystyle x^3-\frac{\nu^3+16\nu^2+38\nu+24}{2\nu(\nu+1)(\nu+3)(\nu^2+8\nu+8)}x$ & $\displaystyle \frac{x^3 }{2}-\frac{3\nu +4}{8\nu {\left(\nu +1\right)}{\left(\nu +2\right)}}x$
    \end{tabular}
    \caption{The first few expressions for $p_{n,\nu}$ and $q_{n,\nu}$}
    \label{table1}
\end{table}

It is important point to note here that the results \eqref{P1} and \eqref{P2} induced in classical theory align with the results in Theorem \ref{thm:3.1} and \ref{thm:3.2}, respectively. In particular, the comparison of Theorem \ref{thm:3.2} with \eqref{P2} suggests that the Hankel determinant $\Delta_n$ can be expressed in terms of $\gamma_k$. More specifically, we establish the following theorem:

\begin{theorem}\label{thm:3.3}
    Let $\nu \in \R \setminus\cb{0,-1,-2,\cdots}$. Then we have, for each $n\in \mathbb{N}$,
    \begin{equation}\label{Hankel1}
        \Delta_n =  \frac{\nu^2 q_{n+1,\nu}(1/\nu) q_{n+2,\nu}(1/\nu)}{ 2^{n^2-2}(\nu+1)^{2n-1} (\nu+2)^{2n-3}  \cdots (\nu+n-1)^3 (\nu+n)}.
    \end{equation}
\end{theorem}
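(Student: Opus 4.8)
The plan is to turn the claimed closed form into a statement about a two-step recursion that the Hankel determinants already satisfy, and then to verify that recursion together with low-order base cases. Throughout I write $Q_m := q_{m,\nu}(1/\nu)$, which is nonzero for $\nu>0$ because $1/\nu$ lies strictly outside the convex hull $[-1/j_{\nu,1}',1/j_{\nu,1}']$ of $\operatorname{supp}\mu$ by \eqref{ineq}, hence is never a zero of $q_{m,\nu}$.

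First I would fix $\nu>0$, where $\mathcal{L}$ is positive definite, so its monic orthogonal polynomials are exactly the $\cb{p_{n,\nu}}$ and coincide with the determinantal polynomials $P_n$ of \eqref{P1} formed from the moments $\mu_n=\sigma_\nu'(n+2)$. Writing \eqref{p recur} at index $n+1$ as $p_{n+1,\nu}=xp_{n,\nu}-\gamma_{n+1}p_{n-1,\nu}$ and matching the coefficient of $p_{n-1,\nu}$ with that in \eqref{P2} yields
\begin{equation*}
    \gamma_{n+1} = \frac{\Delta_{n-2}\Delta_n}{\Delta_{n-1}^2},\qquad n\ge 1,
\end{equation*}
with $\Delta_{-1}=1$; positive definiteness forces $\Delta_m>0$, so everything is well defined, and rearranging gives the recursion $\Delta_n=\gamma_{n+1}\,\Delta_{n-1}^2/\Delta_{n-2}$.

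I would then induct on $n$ for $\nu>0$. The base cases $n=0$ and $n=1$ follow by direct computation: $\Delta_0=\mu_0=\sigma_\nu'(2)$ and, by symmetry, $\Delta_1=\mu_0\mu_2=\sigma_\nu'(2)\sigma_\nu'(4)$, and inserting the values of $Q_1,Q_2,Q_3$ read from Table \ref{table1} together with \eqref{sigma} reproduces \eqref{Hankel1} (the claimed formula does hold at $n=0$, which is what the step $n=2$ needs). For the step $n\ge 2$ I substitute the explicit $\gamma_{n+1}$ from \eqref{p coeef},
\begin{equation*}
    \gamma_{n+1} = \frac{1}{4(\nu+n)(\nu+n-1)}\,\frac{Q_{n+2}Q_{n-1}}{Q_{n+1}Q_n},
\end{equation*}
and the induction hypotheses for $\Delta_{n-1},\Delta_{n-2}$ into $\Delta_n=\gamma_{n+1}\Delta_{n-1}^2/\Delta_{n-2}$. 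The $Q$-part telescopes: the numerator $Q$'s of $\Delta_{n-1}^2/\Delta_{n-2}$ are $(Q_nQ_{n+1})^2/(Q_{n-1}Q_n)$, and multiplying by $Q_{n+2}Q_{n-1}/(Q_{n+1}Q_n)$ cancels every intermediate $Q$ and leaves $Q_{n+1}Q_{n+2}$; with the $\nu$-powers $\nu^4/\nu^2=\nu^2$ this is exactly the numerator $\nu^2Q_{n+1}Q_{n+2}$ of \eqref{Hankel1}.

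The only place demanding genuine care is the exponent bookkeeping in the denominator, and this is where I expect the work to concentrate. Writing each $\Delta_m$ over $2^{m^2-2}\prod_{k=1}^m(\nu+k)^{2(m-k)+1}$, the denominator of $\Delta_{n-1}^2/\Delta_{n-2}$ carries $2^{n^2-4}$ and $(\nu+n-1)^2\prod_{k=1}^{n-2}(\nu+k)^{2(n-k)+1}$; the factor $1/[4(\nu+n)(\nu+n-1)]$ of $\gamma_{n+1}$ then supplies the missing $2^2$, lifts the $(\nu+n-1)$-power from $2$ to $3$, and introduces $(\nu+n)^1$, reassembling precisely $2^{n^2-2}\prod_{k=1}^n(\nu+k)^{2(n-k)+1}$. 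This elementary simplification closes the induction for $\nu>0$. Finally I would remove the restriction by analytic continuation: both sides of \eqref{Hankel1} are rational in $\nu$ — the left because $\Delta_n$ is a polynomial in the moments $\sigma_\nu'(k)$, rational in $\nu$ by \eqref{RL2}--\eqref{series}, the right because the $q_{m,\nu}(1/\nu)$ are rational in $\nu$ — so agreement on $(0,\infty)$ forces agreement on all of $\R\setminus\cb{0,-1,-2,\cdots}$, which is the full range claimed.
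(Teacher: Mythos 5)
Your proof is correct and follows essentially the same route as the paper's: restrict to $\nu>0$ where $\mathcal{L}$ is positive definite, express the Hankel determinants through the recurrence coefficients $\gamma_k$ of Theorem \ref{thm:3.2}, let the factors $q_{m,\nu}(1/\nu)$ telescope, and then extend to all $\nu\in\R\setminus\cb{0,-1,-2,\cdots}$ by an analytic-continuation (rationality/identity-theorem) argument. The only cosmetic difference is that you induct on the two-step relation $\Delta_n=\gamma_{n+1}\Delta_{n-1}^2/\Delta_{n-2}$ obtained by matching \eqref{p recur} with \eqref{P2}, whereas the paper uses the equivalent identity $\Delta_n/\Delta_{n-1}=\mathcal{L}\big(p_{n,\nu}^2(x)\big)=\prod_{k=1}^{n+1}\gamma_k$ and telescopes that product directly.
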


\begin{proof}
    It is plain from \cite[Theorem 4.2 (a) and (b)]{Chihara} and Theorem \ref{thm:3.2} to find that
    \begin{equation*}
        \frac{\Delta_n}{\Delta_{n-1}} = \mathcal{L}\big( p_{n,\nu}^2(x)\big) = \prod_{k=1}^{n+1} \gamma_k,
    \end{equation*}
    in which the first equality comes from $\mathcal{L}\rb{ p_{0,\nu}^2(x)} = \gamma_1 = \sigma'_\nu(2)$. On account of \eqref{p coeef}, we have that for $n\ge1$,
    \begin{equation*}
        \prod_{k=1}^{n+1} \gamma_k= \sigma_\nu'(2) \prod_{k=2}^{n+1} \gamma_k = \frac{\sigma_\nu'(2)}{4^{n}(\nu+1)_{n}(\nu)_{n}}  \frac{q_{0,\nu}(1/\nu)q_{n+2,\nu}(1/\nu)}{q_{2,\nu}(1/\nu)q_{n,\nu}(1/\nu)}.
    \end{equation*}
    Hence, combining the above results with expressions in \eqref{sigma} and Table \ref{table1},
    the desired result in \eqref{Hankel1} has been established, provided that $\nu>0$.

    On considering \eqref{RL2} and \eqref{series}, it follows that $\sigma_\nu'(k)$ is holomorphic in terms of $\nu \in \R \setminus\cb{0,-1,-2,\cdots}$, implying that the Hankel determinant $\Delta_m = \det(\mu_{i+j-2})_{1\le i,j\le m+1}$, where $\mu_n = \sigma_\nu'(n+2)$ for $m\in \mathbb{N}\cup\cb{0}$, is also holomorphic function in $\nu \in \R \setminus\cb{0,-1,-2,\cdots}$. On the other hand, we observe that for each $k\ge1$, $q_{k,\nu}(1/\nu)$ is holomorphic in terms of $\nu \in \R \setminus\cb{0,-1,-2,\cdots}$ by making use of \eqref{q Recur}, \eqref{q Recur2}, and hence so is the right-hand expression in \eqref{Hankel1}. Therefore, by applying the identity theorem in complex analysis, the condition $\nu>0$ for \eqref{Hankel1} can be relaxed as $\nu \in \R \setminus\cb{0,-1,-2,\cdots}$, and the proof is now complete.

\end{proof}

\section{The number of complex zeros of \texorpdfstring{$J_\nu'$}{J nu'}}

The consistent context of Theorem \ref{thm:B} and Theorem \ref{thm:C} (iii) suggest that the number of complex zeros of $J_\nu'$ changes as $\nu$ crosses negative integers. Not only that, but this issue is closely related to the specific value of $\nu$ at which $J_\nu'$ has a double zero. It is supported by earlier results Theorem \ref{thm:C} (iii) and (iv). In details, $J_\nu'$ has $4$ complex zeros when $-2<\nu<\nu^*$, while it has none when $\nu^*<\nu<-1$, for some $\nu^*\in (-2,-1)$. In fact, as $\nu$ passes from the left to the particular value at which $J_\nu'$ has a double zero, it experiences a reduction of 4 complex zeros. In this section, we elaborate on all the scenarios that involve changes in the number of complex zeros.

We begin by characterizing all the specific values of $\nu$ at which $J_\nu'$ has a double zero. We refer to \cite{Kerimov1,Kerimov2} for historical background. It is straightforward from the Bessel differential equation that $J_\nu'$ achieves its double zero other than the origin only at $x = \pm \nu$. Moreover, Heinhold and Kuliseh \cite{Heinhold} have verified that the double zeros of $J_\nu'(x)$ coincide with the zeros of $J_\nu'(\nu)$ as a function of $\nu$, and vice versa. Additionally, they demonstrated that $J_\nu'(\nu)$ has no positive zeros and an odd number of negative zeros within each interval $(-k-1,-k)$, where $k\in \mathbb{N}$. To avoid ambiguity of variables for the functions, we shall specify its variables for the functions hereafter.

\begin{definition}\label{def:4.1}
    Let $\{\nu_k\}_{k=1}^\infty$ be the sequence of negative real zeros of $J'_\nu(\nu)$ arranged in the ascending order of magnitude, i.e., $|\nu_1|<|\nu_2|<\cdots$.
\end{definition}
Lorch and Muldoon \cite[Corollary 4.8]{Lorch2} verified that $J_\nu'(\nu)$ has one and only one zero within each interval $(-k-1,-k)$, where $k\in \mathbb{N}$, i.e., $-k-1< \nu_k <-k$. For a list of evaluations of $\nu_k$'s, we refer the reader to \cite[p. 122]{Heinhold}. We further elaborate on the range of these zeros as follows:

\begin{proposition}\label{prop:4.1}
As a function in $\nu$, $J'_\nu(\nu)$ has exactly one zero in each of the intervals $(-k-1/2,-k)$, where $k\in \mathbb{N}$. In other words, $-k-1/2 < \nu_k < -k$, for each $k\in \mathbb{N}$.

\end{proposition}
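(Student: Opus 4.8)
The plan is to reduce the statement to a single sign change and then borrow the uniqueness already recorded by Lorch and Muldoon \cite{Lorch2}. Since the double zero of $J_\nu'$ attached to a negative parameter $\nu$ sits at the positive turning point $x=-\nu$, I would work with the real-valued function $g(\nu):=J_\nu'(-\nu)$, defined for $\nu<0$. Evaluating the Bessel equation at $x=-\nu$ gives $J_\nu''(-\nu)=\nu^{-1}J_\nu'(-\nu)$, so $g(\nu)=0$ holds exactly when $J_\nu'$ has a double zero, i.e.\ exactly at the points $\nu_k$. By \cite{Lorch2} each interval $(-k-1,-k)$ contains precisely one such zero, so it suffices to prove that $g(-k)$ and $g(-k-1/2)$ are nonzero and of opposite sign; the sign change then forces the unique zero $\nu_k$ into the sub-interval $(-k-1/2,-k)$.

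The two endpoint evaluations are explicit. At the negative integer, $J_{-k}=(-1)^kJ_k$ gives $g(-k)=J_{-k}'(k)=(-1)^kJ_k'(k)$, and since $k<j_{k,1}'$ by \eqref{ineq} the function $J_k$ is positive and increasing on $(0,j_{k,1}')$, whence $J_k'(k)>0$ and $\operatorname{sign}g(-k)=(-1)^k$. At the half-integer I would use the elementary identity $J_{-k-1/2}=(-1)^{k+1}Y_{k+1/2}$, so that $g(-k-1/2)=(-1)^{k+1}Y_{k+1/2}'(k+1/2)$; the whole problem thus collapses to the single inequality $Y_{k+1/2}'(k+1/2)>0$, after which the endpoint signs are $(-1)^k$ and $(-1)^{k+1}$, as required.

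For the positivity of $Y_\nu'(\nu)$ at $\nu=k+1/2$ I would start from Watson's integral representation of $Y_\nu$ \cite{Watson}. Differentiating in $x$, setting $x=\nu$, and using $\cos\nu\pi=0$ at half-integers gives $\pi Y_\nu'(\nu)=A+B$, where $B=\int_0^\infty \sinh t\, e^{\nu(t-\sinh t)}\,dt>0$ trivially (the integrand is positive and $t-\sinh t<0$ guarantees convergence), and $A=\int_0^\pi \sin\theta\cos\!\big(\nu(\theta-\sin\theta)\big)\,d\theta$. The crux — and the step I expect to be the main obstacle — is showing $A>0$, since $A$ is a genuinely oscillatory integral. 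I would handle it by setting $\phi=\theta-\sin\theta$, integrating by parts (the boundary terms vanish because $\cot(\theta/2)\to 0$ at $\theta=\pi$ while the factor $\sin(\nu\phi)$ absorbs the singularity at $\theta=0$), and changing variables to obtain $A=\tfrac{1}{4\nu}\int_0^\infty V(\phi)\sin(\nu\phi)\,d\phi$, where $V(\phi)=\csc^4(\theta(\phi)/2)$ is nonnegative and nonincreasing in $\phi$ (extended by $0$ past $\phi=\pi$). Positivity then follows from the elementary fact that the sine transform of a nonnegative nonincreasing function is nonnegative: splitting $\int_0^\infty V\sin(\nu\phi)\,d\phi$ into consecutive half-periods produces an alternating series whose terms decrease in absolute value, so the sum is bounded below by its first two (positive) contributions.

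With $A>0$ and $B>0$ one gets $Y_{k+1/2}'(k+1/2)>0$, hence $g(-k-1/2)$ and $g(-k)$ have opposite nonzero signs. The intermediate value theorem supplies a zero of $g$ in $(-k-1/2,-k)$, and the uniqueness in $(-k-1,-k)$ from \cite{Lorch2} identifies it as $\nu_k$, giving $-k-1/2<\nu_k<-k$. The delicate points I anticipate are justifying the boundary and convergence manipulations for the singular weight $V$ near $\phi=0$ (the singularity is integrable once paired with $\sin(\nu\phi)$, and $\nu=k+1/2\ge 3/2$ keeps the half-period bookkeeping clean) and confirming that the half-integer reduction through $Y_{k+1/2}$ is legitimate for every $k\ge 1$.
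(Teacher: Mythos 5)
Your proposal is correct and shares its skeleton with the paper's proof: both reduce the claim to a sign change of $J'_{-\nu}(\nu)$ between $\nu=k$ and $\nu=k+1/2$, invoke the intermediate value theorem, and then pin down uniqueness with Lorch--Muldoon \cite{Lorch2}; your integer-endpoint evaluation $(-1)^kJ_k'(k)$ with $J_k'(k)>0$ from \eqref{ineq} is exactly the paper's. Where you genuinely diverge is the crux, the positivity $Y_{k+1/2}'(k+1/2)>0$. The paper obtains this in one line, for \emph{all} $\nu>0$: from the general connection formula \eqref{Jnn}, the small-argument asymptotics $Y_\nu'(x)\sim \frac{\Gamma(\nu+1)}{2\pi}(x/2)^{-\nu-1}>0$, and P\'almai's interlacing inequality $0<\nu<j'_{\nu,1}<y'_{\nu,1}$ \cite{Palmai}, so that $Y_\nu'$ simply has no zero to the left of $y'_{\nu,1}$ and in particular none at $x=\nu$. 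You instead prove the needed inequality from scratch via the Schl\"afli--Watson representation, writing $\pi Y_\nu'(\nu)=A+B$ with $B>0$ trivially and $A=\frac{1}{4\nu}\int_0^\infty V(\phi)\sin(\nu\phi)\,d\phi$, $V(\phi)=\csc^4(\theta(\phi)/2)$ nonincreasing, after the substitution $\phi=\theta-\sin\theta$ and an integration by parts. I checked the delicate points you flagged: the boundary terms do vanish ($\cot(\theta/2)\sin(\nu\phi)\sim \nu\theta^2/3$ at $0$), $V(\phi)\sim C\phi^{-4/3}$ is integrable against $\sin(\nu\phi)$, the half-period terms $a_m$ are nonincreasing by monotonicity of $V$, and strict positivity follows from $a_0>a_1$ (since $V$ blows up at $0$), so your argument is sound. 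The trade-off: your route is self-contained, avoiding the citation of P\'almai's interlacing theorem, at the price of a genuinely harder oscillatory-integral analysis; the paper's route is shorter, needs no half-integer specialization (your identity $J_{-k-1/2}=(-1)^{k+1}Y_{k+1/2}$ is just \eqref{Jnn} evaluated at half-integers), and delivers positivity of both $J'_\nu(\nu)$ and $Y'_\nu(\nu)$ on the whole half-line $\nu>0$ in one stroke.
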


\begin{proof}
When $\nu$ is positive, the first positive zero of $J'_\nu(x)$ exceeds $\nu$ (\cite[\S 15.3]{Watson}). Thus, $J'_\nu(\nu)$ has no positive zeros, and we now assume $\nu$ is negative. Since the zeros of $J'_\nu(\nu)$ coincide with those of $J'_\nu(-\nu)$ due to the symmetry of the Bessel functions, we may consider the zeros of $J'_{-\nu}(\nu)$ for positive $\nu$.

From the definition of the Bessel function of the second kind
\begin{equation*}
    Y_\nu(x)=\frac{\cos{\nu\pi} J_\nu(x)-J_{-\nu}(x)}{\sin{\nu\pi}},
\end{equation*}
we have
\begin{equation}\label{Jnn}
    J'_{-\nu}(\nu)=\cos{\nu\pi}J'_\nu(\nu)-\sin{\nu\pi}Y'_\nu(\nu).
\end{equation}
On consideration behavior of $J_\nu'(x)$ and $Y_\nu'(x)$ for $x>0$ sufficiently small,
\begin{equation*}
    J_\nu'(x) \sim \frac{1}{2\Gamma(\nu)}\rb{\frac{x}{2}}^{\nu-1}>0,\quad Y_\nu'(x)\sim \frac{\Gamma(\nu+1)}{2\pi}\rb{\frac{x}{2}}^{-\nu-1}>0,
\end{equation*}
and the inequality $0< \nu< j'_{\nu,1} < y'_{\nu,1}$ (see \cite[Theorem 1, (b)]{Palmai}), we deduce that both $J'_\nu(\nu)$ and $Y'_\nu(\nu)$ are positive for $\nu>0$. Hence, by using \eqref{Jnn}, it follows that
\begin{equation*}
    (-1)^{k} J'_{-k}(k) >0,\quad (-1)^{k+1} J'_{-(k+1/2)}(k+1/2) >0.
\end{equation*}
As the function $J_{-\nu}(\nu)$ is continuous in $\nu$, it has at least one zero on each of intervals $(k,k+1/2)$, $k\in\mathbb{N}$. The uniqueness follows from the aforementioned theorem \cite[Corollary 4.8]{Lorch2}.

\end{proof}

\begin{remark}
    We note that the real entire function $x^{1-\nu}J_\nu'(x)$ is of order of growth $1$, and we refer to \cite[p. 215]{Baricz2} for detailed proof. It would be, however, helpful to note that $2m \log(2m)$ should be used in place of $m\log m$ in the second equation on page 215.
\end{remark}

Taking account of Theorem \ref{thm:3.3} and Theorem \ref{thm:A}, our main theorem can be established as follows:

\begin{theorem}\label{thm:main}
The following statements hold true:
    \begin{enumerate}[label=\rm(\roman*)]
        \item If $\nu\ge 0$ or $\nu=-1,-2,\cdots$, then $J_\nu'(x)$ has only real zeros.
        \item If $-1< \nu <0$, then $J_\nu'(x)$ has $2$ purely imaginary zeros.
        \item If $\nu_k \le \nu <-k$, $k\in \mathbb{N}$, then $J_\nu'(x)$ has $2k-2$ complex zeros.
        \item If $-k-1 < \nu <\nu_k$, $k\in \mathbb{N}$, then $J_\nu'(x)$ has  $2k+2$ complex zeros.
    \end{enumerate}
    Furthermore, if $-k-1 < \nu <-k$, $k\in \mathbb{N}\cup\cb{0}$, $J_\nu'(x)$ has exactly $2$ purely imaginary zeros for even $k$, and it has no purely imaginary zeros for odd $k$.
\end{theorem}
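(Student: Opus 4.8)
The plan is to combine the machinery of Theorem \ref{thm:A} with the explicit Hankel-determinant formula of Theorem \ref{thm:3.3}, reducing the entire count of complex zeros to a sign analysis of a concrete sequence. First I would set $f(x) = x^{1-\nu}J_\nu'(x)$, which by the preceding remark is a real entire function of order $1$ with infinitely many zeros; its nonzero zeros coincide with those of $J_\nu'$, so counting complex zeros of $J_\nu'$ is equivalent to counting them for $f$. The associated power sums are $\sigma_k = \sigma_\nu'(k)$ for $k \ge 2$, so the determinant $\mathcal{D}_n$ of Theorem \ref{thm:A} is exactly the Hankel determinant $\Delta_n$ built from $\mu_n = \sigma_\nu'(n+2)$. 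Theorem \ref{thm:A} then says that the number of conjugate pairs of complex zeros of $J_\nu'$ equals the number of negative terms in the sequence $\{\Delta_{n-1}\Delta_n\}_{n=0}^\infty$ (with $\Delta_{-1}=1$).

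The central step is therefore to read off the signs of $\Delta_{n-1}\Delta_n$ from the closed form \eqref{Hankel1}. Writing
\begin{equation*}
    \Delta_n = \frac{\nu^2\, q_{n+1,\nu}(1/\nu)\, q_{n+2,\nu}(1/\nu)}{2^{n^2-2}(\nu+1)^{2n-1}(\nu+2)^{2n-3}\cdots(\nu+n-1)^3(\nu+n)},
\end{equation*}
I would track the sign of $\Delta_n$ through the sign of the numerator $q_{n+1,\nu}(1/\nu)q_{n+2,\nu}(1/\nu)$ and of the denominator. The product $\Delta_{n-1}\Delta_n$ telescopes nicely: consecutive determinants share the factor $q_{n+1,\nu}(1/\nu)$, so a given $q_{k,\nu}(1/\nu)$ contributes a squared (hence positive) factor to the product, and the net sign of $\Delta_{n-1}\Delta_n$ is governed by the sign pattern of the $q_{k,\nu}(1/\nu)$ together with the powers of the factors $(\nu+j)$ in the denominators. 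The crux is thus to determine, for $\nu$ in each of the subintervals $(-k-1,-k)$ and at the endpoints $\nu = \nu_k$, exactly how many values of $k$ produce a negative $\Delta_{n-1}\Delta_n$, and to match that count against the asserted numbers $2k-2$ and $2k+2$ of complex zeros (i.e. $k-1$ and $k+1$ conjugate pairs).

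Accordingly I would proceed in the regimes of the theorem. For $\nu \ge 0$ the positivity established in Section 2 (all $\beta_n>0$, hence $\mathcal{L}$ positive definite) forces every $\Delta_n>0$, so no sign changes occur and all zeros are real, recovering statement (i) for positive $\nu$; the integer cases $\nu=-1,-2,\dots$ must be handled by a limiting or direct argument since several denominator factors vanish there, and I would appeal to Lense's result (Theorem \ref{thm:C}(iii)) or continuity of $\sigma_\nu'(k)$ in $\nu$ to conclude all-real zeros at the integers. For $-1<\nu<0$ a short direct computation of $\Delta_0,\Delta_1$ from \eqref{sigma} exhibits precisely one negative product, giving one conjugate pair, consistent with (ii). For the general band $-k-1<\nu<-k$, the key input is the behavior of $q_{m,\nu}(1/\nu)$ as $\nu$ varies: I expect to show that as $\nu$ decreases through such a band the relevant numerator factors change sign, and that $\nu_k$ — the zero of $J_\nu'(\nu)$, which by \eqref{limit2} is the limit of $2^{-n}(\nu)_n^{-1}q_{n,\nu}(\nu)$-type quantities — is exactly the threshold at which one additional negative product appears or disappears, producing the jump from $2k-2$ to $2k+2$ across $\nu_k$.

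The parity statement at the end is the cleanest part: once the number of conjugate pairs is known to be $k-1$ or $k+1$ in the band $(-k-1,-k)$, purely imaginary zeros come in a single conjugate pair on the imaginary axis whose presence is detected by the sign of $f$ (equivalently $J_\nu'$) evaluated along the imaginary axis, where $J_\nu'(ix)$ has a definite-sign series. I would argue that $J_\nu'$ has a purely imaginary conjugate pair precisely when the total number of complex zeros is $\equiv 2 \pmod 4$, which by (iii) and (iv) occurs exactly for even $k$, matching Lense's description in Theorem \ref{thm:C}(iii). The main obstacle I anticipate is the sign bookkeeping for $q_{m,\nu}(1/\nu)$ across the bands and pinning the sign change exactly at $\nu_k$ rather than elsewhere in $(-k-1,-k)$; this is where Proposition \ref{prop:4.1} (localizing $\nu_k$ in $(-k-1/2,-k)$) and the Heinhold–Kulisch identification of double zeros of $J_\nu'$ with zeros of $J_\nu'(\nu)$ must be invoked to guarantee that a double real zero of $J_\nu'$ (hence a single vanishing of $\Delta_{n-1}\Delta_n$ through zero) is the sole mechanism producing the count jump.
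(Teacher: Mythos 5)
Your overall framework is exactly the paper's: apply Theorem \ref{thm:A} to $x^{1-\nu}J_\nu'(x)$ with $\sigma_k=\sigma_\nu'(k)$, identify $\mathcal{D}_n$ with $\Delta_n$, and read the signs of $\Lambda_n=\Delta_{n-1}\Delta_n$ off \eqref{Hankel1} through the factors $q_{m,\nu}(1/\nu)$. But the heart of the proof --- the part you defer with ``I expect to show'' --- is precisely where all the work lies, and your sketch of it is both incomplete and internally inconsistent. The functions $h_m(\nu)=2^m(\nu)_m q_{m,\nu}(1/\nu)$ each have $m-1$ negative simple zeros, mutually interlacing (the paper's Lemma \ref{lem:5.1}, proved via a Wronskian identity), and for fixed $k$ the zeros $\mu_{m,k}$ accumulate at $\nu_k$ from the left (Remark \ref{rem:5.1}). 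Hence the sign pattern of $\{\Lambda_n\}$ changes at \emph{infinitely many} points inside every band $(-k-1,-k)$, not only at $\nu_k$; one must show that crossing each individual $\mu_{m,j}$ flips exactly two $\Lambda$'s in opposite directions so the count is unchanged (Lemma \ref{lem:5.2}), that crossing a negative integer changes the count by one negative product, i.e.\ two complex zeros (Lemma \ref{lem:5.3}), and that crossing the accumulation point $\nu_k$ changes it by two negative products, i.e.\ four complex zeros (Lemma \ref{lem:5.4}, which requires controlling all $\Lambda_n$ with $n$ large simultaneously). Your bookkeeping conflates the last two: you assert that ``one additional negative product'' produces the jump from $2k-2$ to $2k+2$, but that jump is four zeros and needs two products, while the single-product jump is what happens at the integers --- a transition your proposal never analyzes, even though statements (iii)--(iv) cannot be propagated from band to band without it. Relatedly, for $-1<\nu<0$ checking $\Delta_0,\Delta_1$ is not enough: you must show $\Lambda_n>0$ for \emph{all} $n\ge1$ throughout $(-1,0)$, which again rests on the zero count and location for the $h_n$.

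The second genuine gap is the purely imaginary count. Your mod-$4$ parity argument only shows that the number of purely imaginary conjugate pairs is odd when the total number of complex zeros is $\equiv 2 \pmod 4$ and even when it is $\equiv 0 \pmod 4$; it cannot exclude two purely imaginary pairs when $k$ is odd, nor three when $k$ is even, so it does not prove the theorem's final assertion. Moreover, your stated tool --- that ``$J_\nu'(ix)$ has a definite-sign series'' --- fails for $\nu<0$, which is the only regime at issue. The paper instead proves Proposition \ref{prop:5.1} by a separate analytic argument on the positive real axis for the modified Bessel functions: purely imaginary zeros of $J_{-\nu}'$ correspond to positive zeros of $I_{-\nu}'$, and the decomposition $I_{-\nu}'(x)=I_\nu'(x)+\tfrac{2\sin\nu\pi}{\pi}K_\nu'(x)$ together with sign and convexity properties of $I_\nu',K_\nu'$ yields nonexistence on one parity of bands and existence-plus-uniqueness (via monotonicity of $I_{-\nu}'$) on the other. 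Some such argument is indispensable; parity alone cannot substitute for it.
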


\noindent
The detailed proof will be given in the next section.

\begin{figure}[!ht]
  \hspace*{\fill}
  \begin{subfigure}{0.48\textwidth}
    \includegraphics[width=\linewidth]{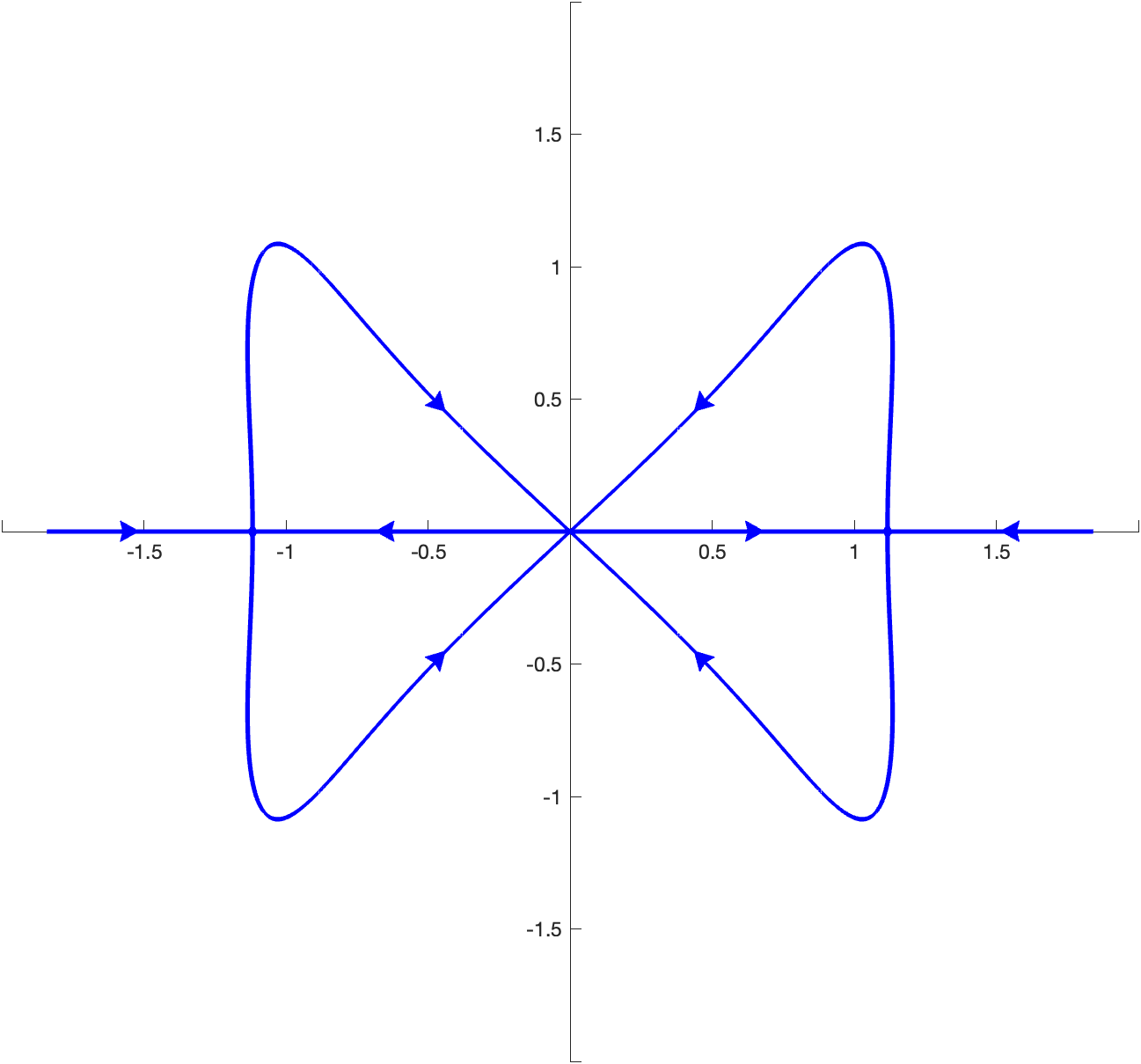}
    \caption{$\nu$ traverses from $-1$ to $-2$} \label{fig:a}
  \end{subfigure}
  \hspace*{\fill}   
  \begin{subfigure}{0.48\textwidth}
    \includegraphics[width=\linewidth]{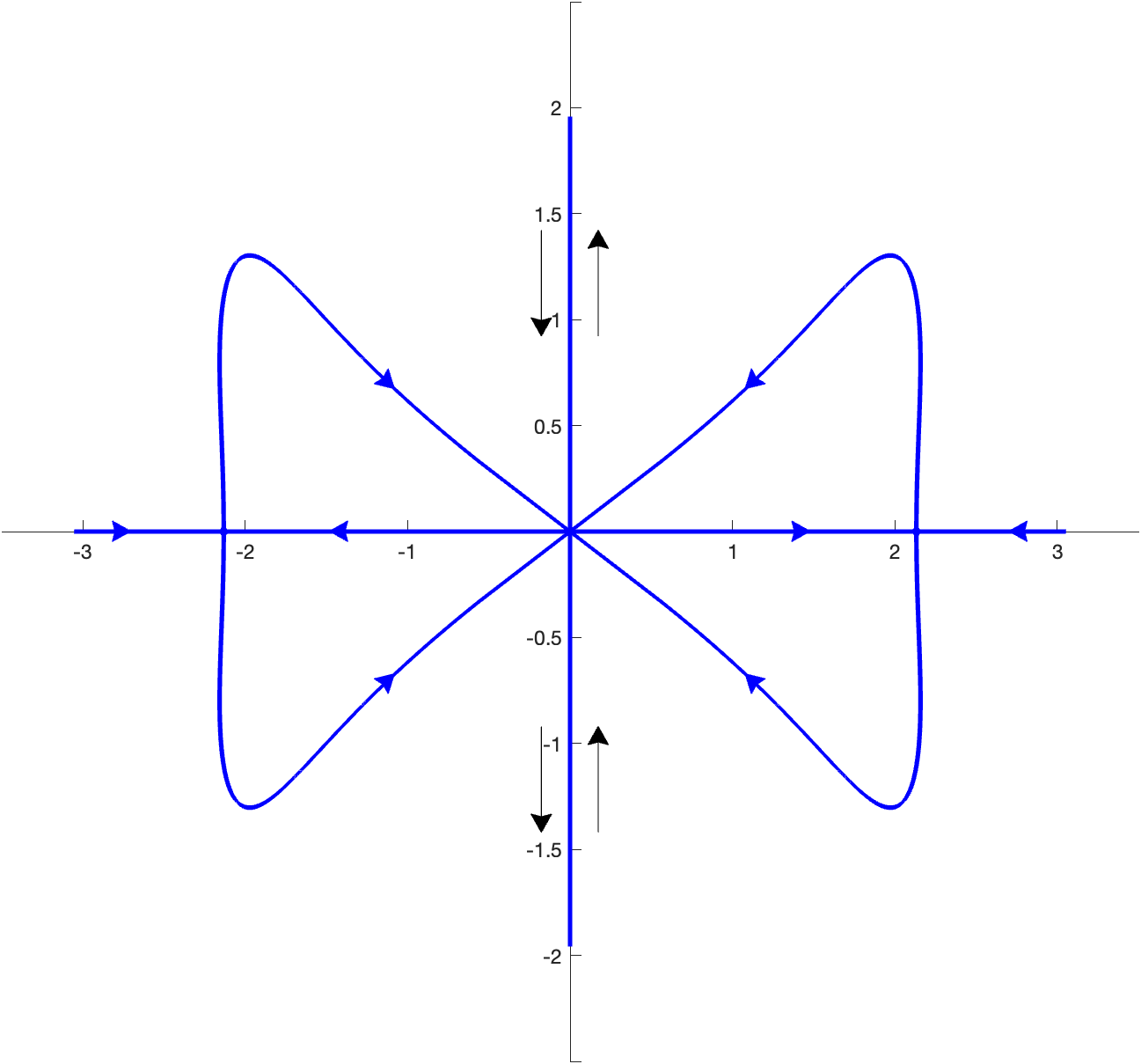}
    \caption{$\nu$ traverses from $-2$ to $-3$} \label{fig:b}
  \end{subfigure}
  \hspace*{\fill}

  \vspace{0.5em}
  \hspace*{\fill}
  \begin{subfigure}{0.48\textwidth}
    \includegraphics[width=\linewidth]{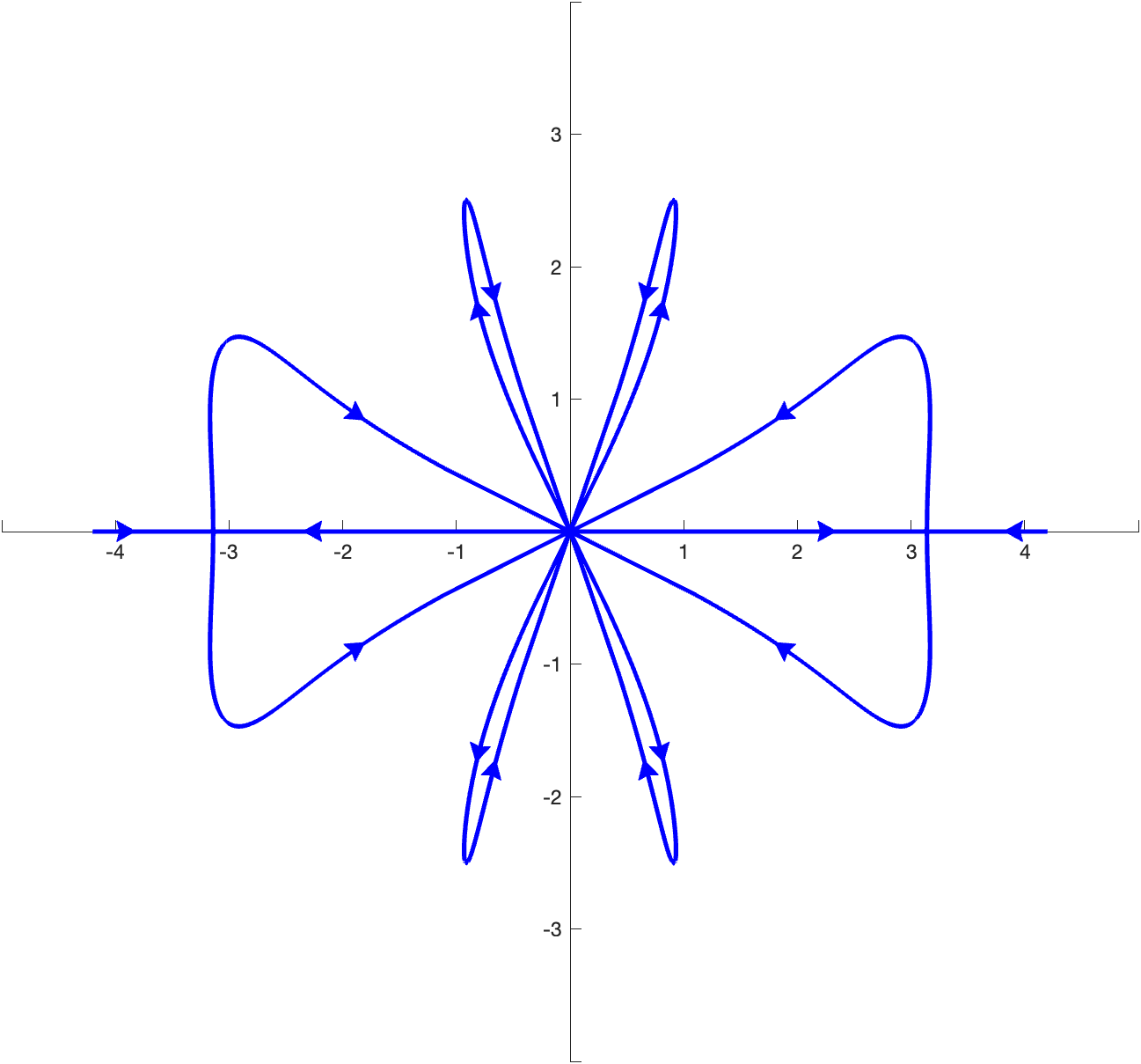}
    \caption{$\nu$ traverses from $-3$ to $-4$} \label{fig:c}
  \end{subfigure}
  \hspace*{\fill}
  \begin{subfigure}{0.48\textwidth}
    \includegraphics[width=\linewidth]{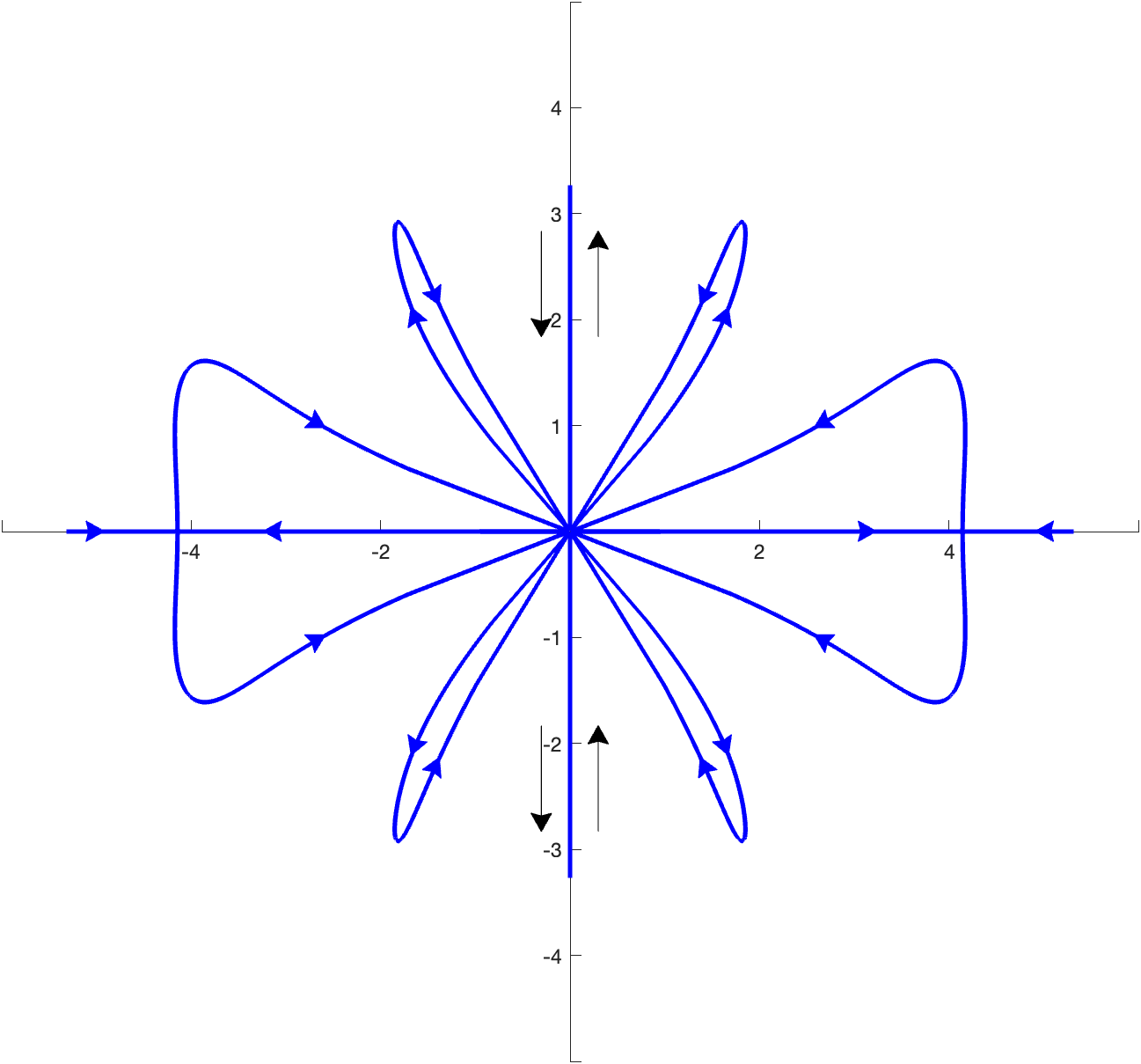}
    \caption{$\nu$ traverses from $-4$ to $-5$} \label{fig:d}
  \end{subfigure}
  \hspace*{\fill}
\caption{Trajectories of the zeros of $J_\nu'(x)$ when $\nu$ decreases continuously.} \label{fig:1}
\end{figure}

We observe from \eqref{series} that for $\nu \in \R \setminus\cb{0,-1,-2,\cdots}$,
\begin{equation}\label{limiting1}
    J_\nu'(x) \sim \frac{1}{2^\nu\Gamma(\nu)}x^{\nu-1} \quad \text{as }x \to 0.
\end{equation}
On the other hand, if $\nu=0$ or $\nu =-n$, $n\in\mathbb{N}$, then we have
\begin{equation}\label{limiting2}
    J_0'(x) \sim -\frac{1}{2}\,x,\quad J_{-n}'(x) \sim \frac{(-1)^n}{2^n \Gamma(n)} x^{n-1} \quad \text{as }x \to 0,
\end{equation}
by using well-known identities $J_\nu'(x) = \big(J_{\nu-1}(x) - J_{\nu+1}(x)\big)/2$ and $J_{-n}(x) = (-1)^n J_n(x)$, $n\in \mathbb{Z}$. Regarding \eqref{limiting1} and \eqref{limiting2}, the limiting behavior of $J_\nu'(x)$ at the origin seems to have jumps where $\nu = 0,-1,-2,\cdots$. In fact, the difference between the order of pole in \eqref{limiting1} and the multiplicity of zero in \eqref{limiting2} is exactly $2n$ when $\nu = -n$, $n\in \mathbb{N}$ (particularly, it becomes $2$ when $\nu=0$). In other words, $2n$ complex zeros stop by the origin at the moment when $\nu = -n$, $n\in \mathbb{N}$.

Therefore, considering Theorem \ref{thm:C} (iii) and Theorem \ref{thm:main}, it can be inferred that for fixed $n\in\mathbb{N}$, all complex zeros of $J_\nu'(x)$ move into the origin as $\nu$ decreases from $-n+\epsilon$ to $-n$ for small $\epsilon>0$. Indeed, as $\nu$ decreases from $-n$ to $-n-\epsilon$, it holds true:
\begin{itemize}
    \item ($n$ is even) $2$ real zeros, $2$ purely imaginary zeros and $2n-4$ complex zeros move out from the origin.
    \item ($n$ is odd) $2$ real zeros and $2n-2$ complex zeros move out from the origin.
\end{itemize}
In the special case of $n=0$, it can be also deduced that $2$ purely imaginary zeros move out from the origin. 
We refer to Figure \ref{fig:1}, which depicts the behavior of all zeros of $J_\nu'(x)$ as $\nu$ decreases continuously. 

To give further insight for the real zeros of $J_\nu'(x)$, it would be helpful to refer to \cite[\S 15.6 (4)]{Watson}, given by
\begin{equation}\label{N-W}
    \frac{d j'}{d\nu}=\frac{2 j'}{(j')^2-\nu^2}\int_0^\infty \Big((j')^2 \cosh{2t}-\nu^2\Big)K_0( 2j' \sinh{t}) e^{-2\nu t}\,dt,
\end{equation}
where $j'$ denotes any zero of $J_\nu'(x)$ and
\begin{equation}\label{MB}
    K_0(t) = \int_0^\infty e^{-t \cosh{\theta}}d\theta.
\end{equation} 
As discussed before, if $\nu>0$, $J_\nu'(x)$ has infinitely many zeros, all of which are real and symmetric with respect to the origin. Moreover,
\begin{equation*}
    0< \nu < j'_{\nu,1} < j'_{\nu,2} <\cdots.
\end{equation*}
On inspecting the formulae \eqref{N-W} and \eqref{MB}, it is immediate that the zeros $j'_{\nu,k}$ (resp. $j'_{\nu,-k}$), $k\in \mathbb{N}$, are monotonically increasing (resp. decreasing) in terms of $\nu$, provided that $\nu>0$. Consequently, as $\nu$ decreases, all zeros move towards the origin. According to  Theorem \ref{thm:C} (iii), when $\nu$ approaches $0$ from the right, two real zeros $j'_{\nu,1}$ and $j'_{\nu,-1}$ meet at the origin. Subsequently, they become purely imaginary with one zero located on the positive imaginary axis and the other on the negative imaginary axis.

As to the case where $\nu<0$, it is worth mentioning the work of Lorch and Muldoon \cite{Lorch2}. Some of the results can be summarized as follows:
\begin{itemize}
    \item If $\nu_k < \nu < -k$, $k\in \mathbb{N}$ then $j'_{\nu,n}$ is increasing in $\nu$ for each $n\ge2$ whereas $j'_{\nu,1}$ is decreasing in $\nu$, approaching to the origin.
    \item If $-k-1< \nu < \nu_k$, $k\in \mathbb{N}$, then $j'_{\nu,n}$ is increasing in $\nu$ for each $n\ge1$.
\end{itemize}
As illustrated in Figure \ref{fig:1}, when $\nu$ decreases monotonically in $(-k-1,-k)$, $k\in\mathbb{N}$, two zeros $j'_{\nu,1}$ and $j'_{\nu,2}$ come together into $x= -\nu$ at the moment of $\nu=\nu_k$ and turn to a conjugate pair of complex zeros for $-k-1<\nu <\nu_k$. Regarding the remaining case where $-1<\nu<0$, it can be inferred from the proof of \cite[Theorem 3.2]{Lorch2} that $0 < |\nu| < j'_{\nu,1} < j'_{\nu,2}<\cdots$. Moreover, it is apparent that $j'_{\nu,n}$ is increasing in $\nu$ for each $n\ge1$ and $-1<\nu<0$ by combining the above inequality with \eqref{N-W} and \cite[Corollary 4.7]{Lorch2}.

\section{Proof of Theorem \ref{thm:main}}

We begin with a proposition concerning the number of the purely imaginary zeros of $J'_\nu(x)$, depending on the range of $\nu$.

\begin{proposition}\label{prop:5.1}
    $J'_\nu(x)$ has a single pair of purely imaginary zeros for $-2k+1<\nu<-2k+2,k\in \mathbb{N}$ and no purely imaginary zeros for $-2k<\nu<-2k+1,k\in \mathbb{N}$.
\end{proposition}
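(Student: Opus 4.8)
The plan is to reduce the count of purely imaginary zeros of $J_\nu'$ to a count of positive real zeros of the modified Bessel derivative $I_\nu'$, and then to pin that count down by combining an ``at most one zero'' lemma with a boundary sign analysis. First I would set $x=iy$ with $y\in\mathbb{R}$ in the power series for $J_\nu'$; a termwise computation gives the identity $J_\nu'(iy)=i^{\nu-1}I_\nu'(y)$, where $I_\nu(y)=\sum_{k\ge0}(y/2)^{2k+\nu}/[k!\,\Gamma(\nu+k+1)]$. Because $i^{\nu-1}\ne0$ and, by \eqref{series}, $J_\nu'$ has the even structure $J_\nu'(x)=(x/2)^{\nu-1}g(x^2)$ with $g$ entire, its nonzero zeros occur in pairs $\pm x$; consequently $\pm iy_0$ (with $y_0>0$) is a conjugate pair of purely imaginary zeros of $J_\nu'$ precisely when $I_\nu'(y_0)=0$. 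Thus the number of such pairs equals the number of positive zeros of $I_\nu'$.

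The heart of the argument, and the step I expect to be the main obstacle, is the lemma that \emph{$I_\nu'$ has at most one positive zero} for every real $\nu$. I would derive this from the modified Bessel equation $y^2I_\nu''+yI_\nu'-(y^2+\nu^2)I_\nu=0$: at a positive zero $y_0$ of $I_\nu'$ it gives $I_\nu''(y_0)=\frac{y_0^2+\nu^2}{y_0^2}I_\nu(y_0)$, so $I_\nu''(y_0)$ has the sign of $I_\nu(y_0)$, which is nonzero (otherwise $y_0$ would be a double zero of $I_\nu$, forcing $I_\nu\equiv0$). Hence every critical point of $I_\nu$ is a strict local minimum where $I_\nu>0$ and a strict local maximum where $I_\nu<0$, and in particular every zero of $I_\nu'$ is simple. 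If $I_\nu'$ had two consecutive positive zeros $y_1<y_2$, then $I_\nu$ would be strictly monotone on $[y_1,y_2]$, making one endpoint a local minimum and the other a local maximum; the sign classification together with this monotonicity then forces a contradiction. This establishes the lemma and shows that any positive zero is a genuine sign change of $I_\nu'$.

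It remains to record the behaviour of $I_\nu'$ at the two ends of $(0,\infty)$. As $y\to0^+$ one has $I_\nu'(y)\sim(y/2)^{\nu-1}/[2\Gamma(\nu)]$, so $I_\nu'(y)$ carries the sign of $\Gamma(\nu)$ there, whereas $I_\nu'(y)\sim e^{y}/\sqrt{2\pi y}>0$ as $y\to\infty$. Writing $\nu\in(-m-1,-m)$, the sign of $\Gamma(\nu)$ on this interval is $(-1)^{m+1}$. The first family $-2k+1<\nu<-2k+2$ is the case $m=2k-2$ (even), where $\Gamma(\nu)<0$: then $I_\nu'$ changes sign on $(0,\infty)$, so it has at least one positive zero, and by the lemma exactly one, giving a single conjugate pair of purely imaginary zeros. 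The second family $-2k<\nu<-2k+1$ is the case $m=2k-1$ (odd), where $\Gamma(\nu)>0$: now $I_\nu'$ is positive at both ends, so it has no sign change, and since any zero would be a sign change while at most one can exist, it has none, i.e. $J_\nu'$ has no purely imaginary zeros. These two cases are exactly the assertions of the proposition.
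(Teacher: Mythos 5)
Your proof is correct, and after the shared first step (identifying conjugate pairs of purely imaginary zeros of $J_\nu'$ with positive zeros of $I_\nu'$, made rigorous through the even entire function in \eqref{series}) it takes a genuinely different route from the paper's. The paper expands $I'_{-\nu}(x)=I'_{\nu}(x)+\frac{2\sin\nu\pi}{\pi}K'_{\nu}(x)$ and argues by cases: on the intervals with no purely imaginary zeros, $I'_\nu>0$ together with $\sin(\nu\pi)K'_\nu(x)>0$ forces $I'_{-\nu}>0$; on the other intervals, $I''_\nu>0$ and $\sin(\nu\pi)K''_\nu(x)>0$ make $I'_{-\nu}$ increasing from $-\infty$ (as $x\to0^+$) to $+\infty$, giving exactly one zero --- and the window $-1<\nu<0$ must be quoted separately from Dixon's theorem (Theorem C (ii)). You instead prove one lemma directly from the modified Bessel equation --- every positive critical point of $I_\nu$ is a strict local minimum where $I_\nu>0$ and a strict local maximum where $I_\nu<0$, so $I_\nu'$ has at most one positive zero and every zero is simple --- and then settle both families simultaneously by comparing the sign of $\Gamma(\nu)$ governing $I_\nu'$ near the origin with positivity at infinity. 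Your route buys uniformity and self-containedness: it uses no properties of $K_\nu$, no connection formula, and no appeal to Dixon, since the case $k=1$ (i.e. $-1<\nu<0$) falls under the same boundary-sign argument. The paper's route, in exchange, yields the stronger structural fact that $I'_{-\nu}$ is monotone on the one-zero intervals, at the cost of the extra case split and the external citation. Two small points you should write out explicitly: zeros of $I_\nu'$ are isolated (real-analyticity), so ``two consecutive zeros'' is meaningful; and in the two-zero contradiction, monotone increase forces a minimum at $y_1$ (so $I_\nu(y_1)>0$) and a maximum at $y_2$ (so $I_\nu(y_2)<0$), which contradicts $I_\nu(y_2)>I_\nu(y_1)$ --- both are routine but deserve a line.
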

\begin{proof}
    Let us consider $J'_{-\nu}(x)$ for $\nu>0$. We note that purely imaginary zeros of $J'_{-\nu}(x)$ are necessarily positive real zeros of $I'_{-\nu}(x)$ where $I_\nu$ stands for the modified Bessel function of the first kind of order $\nu$ (see \cite[\S 3.7]{Watson}). 
    From \cite[\S 3.7 (6)]{Watson}, we deduce that
    \begin{equation}\label{Inu}
        I'_{-\nu}(x)=I'_{\nu}(x)+\frac{2\sin{\nu\pi}}{\pi}K'_{\nu}(x). 
    \end{equation}
    In view of the series representation for $I_\nu(x)$ \cite[\S 3.7 (2)]{Watson} and the integral representation for $K_\nu(x)$ \cite[\S 6.22 (5)]{Watson}, we have
    \begin{equation*}
        I'_\nu(x)>0\quad\textit{and}\quad K'_\nu(x)<0,\quad x>0,\quad \nu>0,
    \end{equation*}
    and 
    \begin{equation*}
        I''_\nu(x)>0\quad\textit{and}\quad K''_\nu(x)>0,\quad x>0,\quad \nu>1.
    \end{equation*}
    The first two inequalities with \eqref{Inu} imply that $I'_{-\nu}(x)$ has no positive real zeros for $2k-1<\nu<2k, k\in \mathbb{N}$. On the other hand, the last two inequalities with \eqref{Inu} yield that $I'_{-\nu}(x)$ is an increasing function for $2k<\nu<2k+1, k\in \mathbb{N}$. On account of two limiting behaviors (\cite[\S 3.71 (2)]{Watson} and \cite[10.30.2, 10.30.4]{DLMF})
    \begin{equation*}
        K'_\nu(x)\sim -\frac{\Gamma(\nu)}{4}\left( \frac{x}{2} \right)^{-\nu-1},\quad \text{as } x\to 0+,\quad \nu>0,
    \end{equation*}
    and $I'_\nu(x) \to+\infty$ as $x\to +\infty$ for $\nu\in \R$,
    we conclude that for $2k<\nu<2k+1, k\in \mathbb{N}$, $I'_{-\nu}(x)$ has at least one zero on $(0,\infty)$ and the zero is unique since $I'_{-\nu}(x)$ is an increasing function. The remaining case where $0<\nu<1$ was covered in Theorem \ref{thm:C} (ii).
\end{proof}

Our next objective is to count the number of complex zeros of $J'_\nu(x)$ for given $\nu$. According to Theorem \ref{thm:A} in correspondence with $\sigma_n = \sigma_\nu'(n)$ and $\mathcal{D}_n = \Delta_n$, the number of complex zeros of $J_\nu'(x)$ is twice the number of negative elements of a sequence $\{ \Lambda_n\}_{n=0}^\infty$ where
\begin{equation*}
    \Lambda_n=\Delta_{n-1} \Delta_{n},\quad n=0,1,\cdots.
\end{equation*}
We abuse notations $\Delta_n$ and $\Lambda_n$ but those depend on the value of $\nu$.
To facilitate our analysis on $\Lambda_n$, we introduce auxiliary functions
\begin{equation*}
    h_n(\nu)=2^n (\nu)_n q_{n,\nu}(1/\nu)
    =\frac{R_{n,\nu}(\nu)-R_{n-2,\nu+2}(\nu)}{2},
\end{equation*}
for $n=0,1,\cdots$.
Note that the zeros of $h_n(\nu)$ coincide with those of $q_{n,\nu}(1/\nu)$ unless $\nu$ is a nonpositive integer, it also has the following properties: It is immediate from \eqref{Lommel Recur} that $\{h_n(\nu)\}_{n=1}^\infty$ satisfies the recurrence relation
\begin{equation}\label{Lommel Recur2}
    h_{n-1}(\nu)+h_{n+1}(\nu)=\frac{2(\nu+n)}{\nu}h_{n}(\nu),\quad n=1,2,\cdots, 
\end{equation}
with initial conditions $h_0(\nu)=h_1(\nu)=1$. Moreover, it can be inductively deduced from the above relation that for $n=0,1,\cdots,$
\begin{equation}\label{init_behav}
    h_{n+1}(\nu)\sim \frac{2^n n!}{\nu^{n}}\quad as\quad \nu\to 0-,
\end{equation}
and
\begin{equation}\label{lim_behav}
    \lim_{\nu\to -\infty} h_n(\nu)=1.
\end{equation}

We now proceed to rephrase \eqref{Hankel1} in terms of $h_n(\nu)$ as
\begin{equation*}
    \Delta_n=\frac{h_{n+1}(\nu)h_{n+2}(\nu)}{2^{(n+1)^2} (\nu+1)^{2n+1} (\nu+2)^{2n-1}  \cdots (\nu+n)^3 (\nu+n+1)},
\end{equation*}
for $n=0,1,2,\cdots$, leading to
\begin{equation}\label{lambda}
    \Lambda_n=\frac{ \big(h_{n+1}(\nu)\big)^2  }{2^{2n^2+2n+1} (\nu+1)^{4n} (\nu+2)^{4n-4}  \cdots  (\nu+n)^4} \frac{h_{n}(\nu) h_{n+2}(\nu)}{\nu+n+1},
\end{equation}
for $n=1,2,\cdots.$ If $\nu$ is neither a zero of $h_n(\nu)$ for any $n=1,2,\cdots$ nor a nonpositive integer, then we find that
\begin{equation}\label{sign}
    \sgn(\Lambda_n)=\sgn \big( (\nu+n+1)h_{n}(\nu) h_{n+2}(\nu) \big),\quad n=0,1,\cdots.
\end{equation}
As a result, the distribution of zeros of $h_n(\nu)$ is notable because the sign change of $h_n(\nu)$ affects the sign of $\Lambda_n$.

\begin{lemma}\label{lem:5.1}
For each $n\in \mathbb{N}$, $h_n(\nu)$ has only $(n-1)$ negative simple zeros. Moreover, the zeros of $h_{n}(\nu)$ and those of $h_{n+1}(\nu)$ are interlaced.
\end{lemma}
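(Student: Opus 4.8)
The plan is to prove both claims simultaneously by induction on $n$, using the three-term recurrence \eqref{Lommel Recur2} together with the limiting behaviors \eqref{init_behav} and \eqref{lim_behav}. The recurrence $h_{n+1}(\nu) = \frac{2(\nu+n)}{\nu}h_n(\nu) - h_{n-1}(\nu)$ expresses $h_n$ as polynomials in $\nu$ (after clearing the $\nu^{-1}$ factors), and I expect $h_n(\nu)$ to be a polynomial of degree $n-1$ in $1/\nu$ or, equivalently, $\nu^{n-1}h_n(\nu)$ to be a monic-type polynomial of degree $n-1$ in $\nu$. The first task is therefore to pin down the exact polynomial structure: from $h_0=h_1=1$ and the recurrence, one checks that $\nu^{n-1}h_n(\nu)$ is a polynomial in $\nu$ whose degree is $n-1$, with all the action happening for $\nu<0$.

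The heart of the argument is the interlacing, which I would establish via a Sturm-sequence / sign-change analysis rather than by locating zeros explicitly. First I would record the two boundary behaviors: by \eqref{lim_behav}, $h_n(\nu)\to 1$ as $\nu\to-\infty$ for every $n$, so all $h_n$ are eventually positive on the far negative axis; by \eqref{init_behav}, $h_{n+1}(\nu)\sim 2^n n!/\nu^n$ as $\nu\to 0-$, which has sign $(-1)^n$ near $0-$ (since $\nu^n$ with $\nu<0$ carries that sign). Thus $h_{n+1}$ changes overall sign between $-\infty$ and $0-$ exactly when $n$ is odd, and more importantly the \emph{parity} of the number of zeros in $(-\infty,0)$ is forced to be $n$ mod $2$, consistent with having exactly $n$ zeros. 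To upgrade this parity count to the exact count $n-1$ zeros for $h_n$ (hence $n$ zeros for $h_{n+1}$) and to get interlacing, I would argue inductively: assuming $h_n$ has $n-1$ simple negative zeros interlacing those of $h_{n-1}$, evaluate the recurrence \eqref{Lommel Recur2} at a zero $\nu_*$ of $h_n$, giving $h_{n+1}(\nu_*) = -h_{n-1}(\nu_*)$. By the inductive interlacing hypothesis, $h_{n-1}$ takes strictly alternating signs at the consecutive zeros of $h_n$, so $h_{n+1}$ takes the opposite alternating signs there; combined with the known sign of $h_{n+1}$ at $\nu\to-\infty$ and at $\nu\to0-$, the intermediate value theorem forces a zero of $h_{n+1}$ strictly between each consecutive pair of zeros of $h_n$, plus one more in the outermost interval, yielding $n$ simple zeros interlacing those of $h_n$.

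Simplicity of the zeros comes for free from this scheme: if $h_n$ and $h_{n-1}$ shared a zero $\nu_*$, the recurrence \eqref{Lommel Recur2} would force $h_{n+1}(\nu_*)=0$ as well, and then backward recursion would propagate the vanishing down to $h_1(\nu_*)=1$ or $h_0(\nu_*)=1$, a contradiction; hence consecutive $h_n, h_{n-1}$ never share zeros, which is exactly what makes the strict alternation of signs in the induction valid and rules out double zeros. The main obstacle I anticipate is controlling the sign of $h_{n+1}$ in the \emph{outermost} intervals, namely $(-\infty, \text{smallest zero of }h_n)$ and $(\text{largest zero of }h_n,\,0)$, since the interlacing step only directly produces zeros strictly between consecutive zeros of $h_n$; here I must invoke the asymptotics \eqref{init_behav} and \eqref{lim_behav} carefully to confirm the sign at the two ends matches what is needed to produce exactly one additional zero (and no more) in the outer region, giving the total of $n$ rather than $n-2$. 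Once the boundary signs are checked against the alternation pattern, the count is exact and the interlacing is complete.
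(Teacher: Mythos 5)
Your proposal is correct, but it follows a genuinely different route from the paper's. The paper's proof is built on a Wronskian identity: differentiating the recurrence \eqref{Lommel Recur2} divided by $h_n(\nu)$ and telescoping yields $W[h_n,h_{n+1}](\nu)<0$ for all $\nu<0$ (a negative sum of squares, \eqref{Wrons1}); simplicity then follows because a multiple zero of $h_n$ would force this Wronskian to vanish, interlacing follows from the sign of the Wronskian at consecutive zeros, and the exact count follows from the strict monotonicity of the ratio $h_{n+1}(\nu)/h_n(\nu)$ (whose derivative is $W[h_n,h_{n+1}]/h_n^2<0$) together with its limits $-\infty$ as $\nu\to 0-$ and $1$ as $\nu\to-\infty$, with base case $h_2(\nu)=(\nu+2)/\nu$. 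You avoid differentiation entirely: you evaluate the recurrence at the zeros of $h_n$ to get $h_{n+1}=-h_{n-1}$ there, use the inductive interlacing hypothesis to obtain strict sign alternation, and close the count with the intermediate value theorem in the two outer intervals plus the degree bound on the monic polynomial $H_{n+1}(\nu)=\nu^{n}h_{n+1}(\nu)$; both proofs share the polynomial structure $H_n$, the asymptotics \eqref{init_behav}, \eqref{lim_behav}, and the same base case. Your discrete, Sturm-chain-style induction is more elementary and delivers the count, negativity, simplicity, and interlacing in one pass. What the paper's route buys is infrastructure reused later: the inequality \eqref{Wrons1} is invoked again in the proof of Lemma \ref{lem:5.2} to fix the sign of $h_{n-1}(\mu)h_n'(\mu)$ at a zero $\mu$ of $h_n$, and your argument does not produce that tool. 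One point of precision: the absence of common zeros of consecutive $h_n$'s does not by itself ``rule out double zeros,'' as you assert; in your scheme simplicity really comes from the degree count --- you exhibit $n$ distinct negative real zeros of the degree-$n$ polynomial $H_{n+1}$, so every zero is simple and there are no others. Since the degree bound is already part of your plan, this is a matter of attribution rather than a gap.
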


\begin{proof}
Taking differentiation after dividing both sides of \eqref{Lommel Recur2} by $h_n(\nu)$, the Wronskian formula is given by
\begin{align*}
    W[h_{n},h_{n+1}](\nu)&= h_n(\nu)\rb{\frac{d}{d\nu}h_{n+1}(\nu)}-\rb{\frac{d}{d\nu}h_n(\nu)} h_{n+1}(\nu) \\
    &=-\frac{2n}{\nu^2}\big(h_{n}(\nu)\big)^2+W[h_{n-1},h_{n}](\nu).
\end{align*}
Applying this iteratively, we obtain
\begin{equation}\label{Wrons1}
    \begin{split}
    W[h_{n},h_{n+1}](\nu) &=-\frac{2n}{\nu^2}\big(h_{n}(\nu)\big)^2-\frac{2(n-1)}{\nu^2}\big(h_{n-1}(\nu)\big)^2\\  &\qquad - \cdots -\frac{4}{\nu^2}\big(h_{2}(\nu)\big)^2 -\frac{1}{\nu^2}\big(h_{1}(\nu)\big)^2+W[h_{0},h_{1}](\nu)\\
    &<0,\quad \nu<0,\quad n\in \mathbb{N},
\end{split}
\end{equation}
since $h_0(\nu)=h_{1}(\nu) = 1$. For $\nu<0$, $n\in \mathbb{N}$, any zeros of $h_{n}(\nu)$, if exist, are simple because $W[h_n,h_{n+1}](\nu)$ vanishes at any multiple zeros of $h_n(\nu)$.

Now let $\mu$ and $\bar{\mu}$ be any two consecutive negative zeros of $h_{n}(\nu)$ with $\mu < \bar{\mu}$. It follows that $h'_{n}(\mu)h'_{n}(\bar{\mu}) <0$ since $\mu$ and $\bar{\mu}$ are simple zeros. From the fact that $W[h_{n},h_{n+1}](\nu) <0$ for all $\nu < 0,$ we have
\begin{equation*}
  h_{n+1}(\mu) h_{n+1}(\bar{\mu}) = \frac{W[h_{n},h_{n+1}](\mu)W[h_{n},h_{n+1}](\bar{\mu})}{h'_{n}(\mu)h'_{n}(\bar{\mu})} <0,
\end{equation*}
so an odd number of zeros of $h_{n+1}(\nu)$ are located between $\mu$ and $\bar{\mu}.$ Interchanging the functions $h_{n}(\nu)$ and $h_{n+1}(\nu)$ throughout the aforementioned argument, we conclude that $h_{n}(\nu)$ has an odd number of zeros between any two consecutive zeros of $h_{n+1}(\nu)$. Therefore, the zeros of $h_{n}(\nu)$ are interlaced with those of $h_{n+1}(\nu)$.

We observe that 
$$\frac{d}{d\nu} \frac{h_{n+1}(\nu)}{ h_{n}(\nu)} = \frac{W[h_{n},h_{n+1}](\nu)}{h_{n}^2(\nu)},$$
which directly implies that $h_{n+1}(\nu) / h_{n}(\nu)$ is a strictly decreasing function for all $\nu<0$ except at points where $h_{n}(\nu) = 0$.
Furthermore, from \eqref{init_behav} and \eqref{lim_behav}, we have
\begin{equation*}
    \lim_{\nu\to 0-}\frac{h_{n+1}(\nu)}{h_{n}(\nu)}=-\infty \quad\text{and}\quad \lim_{\nu\to -\infty}\frac{h_{n+1}(\nu)}{h_{n}(\nu)}=1.
\end{equation*}
Combining these with the above interlacing property, we obtain that, for $n>1$, $h_{n+1}(\nu)$ has one more negative zero than $h_n(\nu)$ and the largest negative zero of $h_{n+1}(\nu)$ comes closer to the origin than that of $h_{n}(\nu)$.
Since $h_2(\nu)=(\nu+2)/\nu$ has one negative zero, $h_n(\nu)$ has $(n-1)$ negative zeros for $n\in \mathbb{N}$.
To show that $h_n (\nu)$ has only negative zeros, we define the polynomial $H_{n}$ as $H_{n}(\nu) = \nu^{n-1} h_n(\nu)$ for each $n \in \mathbb{N}.$ Then, the polynomial satisfies $\nu^2 H_{n}(\nu) + H_{n+2}(\nu) = 2(\nu+n+1)H_{n+1}(\nu)$ with the initial values $H_1(\nu) = 1$ and $H_2(\nu) = \nu+2$. It follows that $H_n(\nu)$ is a monic polynomial of degree $n-1$, and since the zeros of $h_n(\nu)$ coincide with those of $H_n(\nu)$, we conclude that $h_n(\nu)$ has only $(n-1)$ negative simple zeros for $n \in \mathbb{N}$.
\end{proof}
We can now define a sequence of zeros of $h_n(\nu)$ for each $n\ge2$.
\begin{definition}\label{def:5.1}
    For $n\ge 2$, let $\{\mu_{n,k}\}_{k=1}^{n-1}$ be the sequence of negative zeros of $h_{n}(\nu)$, arranged in ascending order of magnitude.
\end{definition}

\begin{remark}\label{rem:5.1} \
    \begin{enumerate}[label=(\roman*)]
        \item Adopting this definition, Lemma \ref{lem:5.1} states
        \begin{equation}\label{inter}
            \mu_{n+1,n}<\mu_{n,n-1}<\mu_{n+1,n-1}<\cdots<\mu_{n+1,2}<\mu_{n,1}<\mu_{n+1,1},
        \end{equation}
        for $n = 2,3,\cdots$.
        \item The zeros of $h_{n}(\nu)$ approach to the zeros of $J'_{\nu}(\nu)$ as $n$ gets large, by Hurwitz's theorem in complex analysis, concerning the limit of the zeros of a sequence of holomorphic functions, {\rm i.e.}, for each $k=1,2,\cdots$, we have
        \begin{equation*}
            \lim_{n\to \infty} \mu_{n,k}=\nu_k.
        \end{equation*}

        \item Regarding (i) and (ii), the sequence $\cb{\mu_{n,k}}_{n=k+1}^\infty$, for fixed $k\in \mathbb{N}$, is strictly increasing sequence, which converges to $\nu_k$. Accordingly, for given $k\in \mathbb{N}$, it follows that
        \begin{equation*}
            \mu_{n,s} < \nu_k \quad \text{for } s\ge k,\ n\ge s+1.
        \end{equation*}
    \end{enumerate}
\end{remark}
It would be more convenient to define a counting function $\mathcal{N}(\nu)$ by the number of negative elements in $\{ \Lambda_n\}_{n=0}^\infty$ for fixed $\nu$, since the sign of $\Lambda_n$ depends on $\nu$.
By means of \eqref{lambda}, it is not difficult to see that $\mathcal{N}(\nu)$ is undefined if $\nu$ belongs to one of sets $\mathbb{Z}_{\le 0},M$, and the function $\mathcal{N}(\nu)$ is constant in terms of $\nu$ on each of closed intervals lying on $\R \setminus (M\cup\mathbb{Z}_{\le0})$, where $\mathbb{Z}_{\le 0}$ denotes the set of nonpositive integers and $M$ denotes the set of all zeros of $h_{n}(\nu)$, $n\ge2$.

On the other hand, Theorem \ref{thm:C} (iii), (iv) and what has been discussed in section 4 suggest that $\mathcal{N}(\nu)$ may have discontinuities at $\nu = \nu_k$, $k=1,2,\cdots$, presented in Definition \ref{def:4.1}.

Specifically, we denote the above two sets $M, N$ as
\begin{equation*}
    M=\{ \mu_{n,k} \, :\, n\ge 2,\ 1\le k \le n-1 \},\quad N=\{ \nu_k \, :\, k\in \mathbb{N} \}.
\end{equation*}
Owing to Proposition \ref{prop:4.1}, it is immediate that the sets $N$ and $\mathbb{Z}_{\le 0}$ are disjoint.
The set $M$, however, may have nonempty intersections with $N$ and $\mathbb{Z}_{\le 0}$, respectively. For instance, $-2 \in \mathbb{Z}_{\le 0}$ is a zero of $h_2(\nu)= (\nu+2)/\nu$.

We have divided the proof into a sequence of lemmas, concerning if the value of $\mathcal{N}(\nu)$ changes as $\nu$ crosses points lying in each of the disjoint sets $M\setminus (N \cup \mathbb{Z}_{\le 0})$, $N$, and $\mathbb{Z}_{\le0}$.

\begin{lemma}\label{lem:5.2}
For $\mu\in M\setminus (N \cup \mathbb{Z}_{\le 0})$,
\begin{equation*}
    \lim_{\nu\to \mu-}\mathcal{N}(\nu)=\lim_{\nu\to \mu+}\mathcal{N}(\nu).
\end{equation*}
Thus, the number of complex zeros of $J'_\nu(x)$ does not change as $\nu$ crosses the points in $M\setminus(N\cup \mathbb{Z}_{\le 0})$.
\end{lemma}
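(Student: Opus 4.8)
The plan is to determine exactly which terms $\Lambda_n$ reverse sign as $\nu$ passes through $\mu$ and to show that these reversals cancel in the count $\mathcal{N}(\nu)$. By the local constancy of $\mathcal{N}$ on $\R\setminus(M\cup\mathbb{Z}_{\le0})$ noted above, it suffices to compare its values on the two intervals immediately to the left and to the right of $\mu$, on both of which \eqref{sign} is valid. Since $\mu$ is a zero of $h_m$ for some $m\ge 2$ but lies outside $N\cup\mathbb{Z}_{\le0}$, I would observe that the factor $h_m$ occurs in the sign formula \eqref{sign} precisely for $\Lambda_m$ (as the $h_n$ factor) and for $\Lambda_{m-2}$ (as the $h_{n+2}$ factor); every other factor $(\nu+n+1)$, $h_n$, $h_{n+2}$ appearing in the remaining $\Lambda_n$ stays continuous and nonzero across $\mu$. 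Because $\mu$ is a simple zero of $h_m$ by Lemma \ref{lem:5.1}, $h_m$ changes sign at $\mu$, so only $\Lambda_{m-2}$ and $\Lambda_m$ flip.

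The core step is to prove that $\Lambda_{m-2}$ and $\Lambda_m$ carry opposite signs near $\mu$, whence flipping both leaves the number of negative terms fixed. By \eqref{sign} the product of their signs equals that of $(\nu+m-1)(\nu+m+1)h_{m-2}(\nu)h_{m+2}(\nu)$, the common $h_m^2$ being positive. I would evaluate this at $\nu=\mu$ using the recurrence \eqref{Lommel Recur2}: taking it at index $m$ gives $h_{m+1}(\mu)=-h_{m-1}(\mu)$, while at indices $m-1$ and $m+1$ it gives
\begin{equation*}
    h_{m-2}(\mu)=\frac{2(\mu+m-1)}{\mu}h_{m-1}(\mu),\qquad h_{m+2}(\mu)=-\frac{2(\mu+m+1)}{\mu}h_{m-1}(\mu).
\end{equation*}
Substituting, the quantity $(\mu+m-1)(\mu+m+1)h_{m-2}(\mu)h_{m+2}(\mu)$ collapses to $-\frac{4}{\mu^2}(\mu+m-1)^2(\mu+m+1)^2\big(h_{m-1}(\mu)\big)^2$, which is strictly negative since $h_{m-1}(\mu)\ne0$ by interlacing and $\mu\ne0,\,-m+1,\,-m-1$ (the last two because $\mu\notin\mathbb{Z}_{\le0}$ and $m\ge2$). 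Hence $\Lambda_{m-2}$ and $\Lambda_m$ have opposite signs on both sides of $\mu$, so the simultaneous flip of the pair preserves $\mathcal{N}(\nu)$; by Theorem \ref{thm:A} the number of complex zeros, equal to $2\mathcal{N}(\nu)$, is then unchanged.

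The one point needing extra care, and what I expect to be the main obstacle, is the possibility that $\mu$ is simultaneously a zero of several $h_m$. I would dispose of this by noting that $h_m$ and $h_{m+1}$ interlace and hence share no zero, and that $h_m$, $h_{m+2}$ share no zero either: if both vanished at $\mu$, the recurrence \eqref{Lommel Recur2} at index $m+1$ would force $h_{m+1}(\mu)=0$ (using $\mu\ne0,-m-1$), contradicting interlacing. Thus any two indices $m$ with $h_m(\mu)=0$ differ by at least $3$, so the associated flip-pairs $\{m-2,m\}$ are pairwise disjoint, and each contributes zero net change by the computation above; summing over them shows that $\lim_{\nu\to\mu-}\mathcal{N}(\nu)=\lim_{\nu\to\mu+}\mathcal{N}(\nu)$, which is the assertion.
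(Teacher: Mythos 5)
Your proof is correct, and its core step takes a genuinely different route from the paper's. Both arguments share the same skeleton: only $\Lambda_{m-2}$ and $\Lambda_m$ can change sign at $\mu$ (since $h_m$ has a simple zero there by Lemma \ref{lem:5.1}), one must show the count of negative terms is preserved, and simultaneous zeros of several $h_m$ are handled by showing that any two such indices differ by at least $3$ --- your recurrence-plus-interlacing argument for this last point is exactly the paper's. Where you diverge is the cancellation step. The paper pins down the \emph{exact} sign of each of $\Lambda_{m-2}$ and $\Lambda_m$ on each side of $\mu$: it uses the behavior \eqref{init_behav} near the origin to fix the sign $(-1)^{n+k}h_n'(\mu)>0$, the Wronskian inequality \eqref{Wrons1} to determine the sign of $h_{m-1}$ near $\mu$, and then the recurrence to transfer this to $h_{m-2}$, arriving at \eqref{change1}--\eqref{change2}: as $\nu$ increases through $\mu$, $\Lambda_{m-2}$ passes from negative to positive while $\Lambda_m$ passes from positive to negative. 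You instead prove only that $\Lambda_{m-2}$ and $\Lambda_m$ have opposite signs near $\mu$, by evaluating $(\nu+m-1)(\nu+m+1)h_{m-2}(\nu)h_{m+2}(\nu)$ at $\nu=\mu$ via three applications of \eqref{Lommel Recur2} and collapsing it to $-\tfrac{4}{\mu^2}(\mu+m-1)^2(\mu+m+1)^2\big(h_{m-1}(\mu)\big)^2<0$; this is a purely algebraic identity that avoids the Wronskian and the $(-1)^{n+k}$ bookkeeping entirely, and it genuinely suffices here, since two terms of opposite sign that both flip leave the count unchanged. What your shortcut does not deliver is the directional information recorded in \eqref{change1}, \eqref{change2} and \eqref{sign5}, which the paper reuses in the proofs of Lemmas \ref{lem:5.3} and \ref{lem:5.4}; so within the paper's architecture the heavier computation earns its keep, but as a self-contained proof of this lemma your route is the more economical and elegant one.
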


\begin{proof}
    Let $n,\,k$ be fixed with $n\ge2$, $1\le k\le n-1$, and let
    $\mu = \mu_{n,k}$ such that $h_{m}(\mu_{n,k}) \ne 0$ for all $m\ne n$.
    By using \eqref{sign}, we note that the signs of $\Lambda_{n-2}$ and $\Lambda_{n}$ change as $\nu$ crosses $\mu$, while $\Lambda_j$ keeps the same sign near $\nu=\mu$ for $j\ne n-2,n$. 
    
    From \eqref{init_behav}, we observe that $(-1)^{n-1} h_{n}(\nu)>0$ on $(-\epsilon, 0)$ for small $\epsilon>0$. This observation, with the fact that $\mu=\mu_{n,k}$ is a simple zero, allows us to deduce
    \begin{equation}\label{sign2}
        (-1)^{n+k} h'_{n}(\mu) >0,
    \end{equation}
    which implies that
    \begin{equation}\label{sign3}
        \lim_{\nu\to \mu+} \sgn \big(h_{n}(\nu)\big)=(-1)^{n+k},\quad  \lim_{\nu\to \mu-} \sgn \big(h_{n}(\nu)\big)=(-1)^{n+k+1}.
    \end{equation}
    Furthermore, substituting $n \mapsto n-1$, $\nu=\mu$ in \eqref{Wrons1} and using \eqref{sign2}, we find that $W[h_{n-1},h_n](\mu) = h_{n-1}(\mu)h'_n(\mu)<0$ so that $(-1)^{n+k+1}h_{n-1}(\mu) >0$. Since $h_{n-1}(\nu)$ and $h_n(\nu)$ cannot have common zeros due to Lemma \ref{lem:5.1}, we also find that the following holds near $\nu = \mu$:
    \begin{equation}\label{sign4}
        (-1)^{n+k+1}h_{n-1}(\nu) >0.
    \end{equation}
    Meanwhile, we also observe from \eqref{Lommel Recur2} that
    if $\nu$ lies in the small neighborhood of $\mu$,
    \begin{equation}\label{sign1}
        \sgn\big((\nu+n-1) h_{n-2}(\nu) \big) = - \sgn\big(  h_{n-1}(\nu) \big).
    \end{equation}
    On combining \eqref{sign}, \eqref{sign1} and \eqref{sign4}, if $\nu$ lies in the small neighborhood of $\mu$ but $\nu \ne \mu$, we have
    \begin{equation*}
        \sgn(\Lambda_{n-2})=\sgn \big( (\nu+n-1)h_{n-2}(\nu) h_{n}(\nu) \big) = \sgn\big( (-1)^{n+k}h_{n}(\nu) \big).
    \end{equation*}
    Hence by \eqref{sign3}, we conclude that the sign of $\Lambda_{n-2}$ changes from positive to negative as $\nu$ crosses $\mu$ from right to left, {\rm i.e.},
    \begin{equation}\label{change1}
    \lim_{\nu\to \mu-} \sgn\rb{\Lambda_{n-2}} = -1,\quad  \lim_{\nu\to \mu+}  \sgn\rb{\Lambda_{n-2}} = 1.
    \end{equation}

    In a similar manner, one may verify $\sgn(\Lambda_{n})=\sgn\big( (-1)^{n+k+1}h_{n}(\nu) \big)$,
    leading to
    \begin{equation}\label{change2}
    \lim_{\nu\to \mu-} \sgn\rb{\Lambda_{n}} = 1,\quad  \lim_{\nu\to \mu+}  \sgn\rb{\Lambda_{n}} = -1.
    \end{equation}
    Therefore, the total number of negative elements in $\{\Lambda_{j}\}_{j=0}^\infty$ is preserved near $\nu = \mu$.

    More generally, there can be several distinct pairs of $(n,k)$ such that $\mu=\mu_{n,k}$. Let $I$ be an index set such that $\mu = \mu_{n,k}$ for all $(n,k)\in I$. In fact, $I$ is a finite set. To explain, we take $(n_0,k_0)\in I$. Then, there exists $k^* \ge k_0$ such that $\nu_{k^*+1} < \mu_{n_0,k_0} < \nu_{k^*}$. According to Remark \ref{rem:5.1} (iii), it is apparent that if $(n,k)\in I$, then $k\le k^*$ and $(n',k)\not\in I$ for any $n'\ne n$. Hence, $I$ contains at most $k^*$ elements. Let $|I| = K\le k^*$. We also claim that $|n - n'| \ne 2$ for any $(n,k),\,(n',k')\in I$. Suppose, on the contrary, that $n' = n+2$ for some $(n,k),\,(n',k')\in I$. By substituting $\mu_{n,k} = \mu_{n',k'}$ in \eqref{Lommel Recur2} with $n\mapsto n+1$, we obtain
    \begin{equation*}
        \frac{2(\mu_{n,k}+n+1)}{\mu_{n,k}} h_{n+1}(\mu_{n,k}) = h_n(\mu_{n,k}) + h_{n+2}(\mu_{n,k}) = 0.
    \end{equation*}
    Since $\mu_{n,k} \not\in \mathbb{Z}_{\le 0}$, it follows $h_{n+1}(\mu_{n,k})=0$ which contradicts to the interlacing property in Lemma \ref{lem:5.1}. 
    In this case, the sign of $\Lambda_{j}$ for $j\in J$ change as $\nu$ crosses $\mu$, while $\Lambda_n$ keeps the same sign near $\nu=\mu$ for $n\not \in J$, where $J = \cb{n_j-2,\,n_j\,:\, (n_j,k_j)\in I,\ j=1,\cdots,K}$. Obviously, $J$ has exactly $2K$ elements since $|n_i-n_j|\ne 2$. Thus, the same argument, as discussed above, can be applied to see that for each $j = 1,\cdots,K$,
    \begin{equation}\label{sign5}
    \begin{alignedat}{2}
    \lim_{\nu\to \mu-} &\sgn\rb{\Lambda_{n_j-2}} = -1,\quad  &&\lim_{\nu\to \mu+}  \sgn\rb{\Lambda_{n_j-2}} = 1,\\
    \lim_{\nu\to \mu-} &\sgn\rb{\Lambda_{n_j}} = 1,\quad  &&\lim_{\nu\to \mu+}  \sgn\rb{\Lambda_{n_j}} = -1,
    \end{alignedat}
    \end{equation}
    and the proof is now complete because the value of $\mathcal{N}(\nu)$ is invariant in the small neighborhood of $\mu$ but $\nu \ne \mu$, by the definition of $\mathcal{N}(\nu)$.
    
\end{proof}

\begin{lemma}\label{lem:5.3} 
    For $n\in \mathbb{N}$,
    \begin{equation*}
        \lim_{\nu\to -n-}\mathcal{N}(\nu)=\lim_{\nu\to -n+}\mathcal{N}(\nu)-1.
    \end{equation*}
    Thus, the number of complex zeros of $J'_\nu(x)$ decreases by $2$ as $\nu$ crosses negative integers from right to left.
\end{lemma}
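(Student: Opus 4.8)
The plan is to read off, for each $n$, precisely which entries of $\{\Lambda_j\}_{j=0}^\infty$ reverse sign as $\nu$ decreases through $-n$, and in which direction. The basic tool is the sign rule \eqref{sign}, namely $\sgn(\Lambda_j)=\sgn\big((\nu+j+1)h_j(\nu)h_{j+2}(\nu)\big)$, valid whenever $\nu$ is neither a nonpositive integer nor a zero of some $h_m$. If $-n\notin M$, then as $\nu$ crosses $-n$ every factor $h_j(\nu)$, $h_{j+2}(\nu)$ retains its sign, and the only factor that can change sign is the linear one $\nu+j+1$, which vanishes exactly when $j+1=n$. Thus $\Lambda_{n-1}$ is the sole entry whose sign reverses.

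To settle the direction I would evaluate the recurrence \eqref{Lommel Recur2} at $m=n$ and $\nu=-n$: the coefficient $2(\nu+n)/\nu$ vanishes there, giving the identity $h_{n+1}(-n)=-h_{n-1}(-n)$ and hence $h_{n-1}(-n)h_{n+1}(-n)=-\big(h_{n-1}(-n)\big)^2<0$, provided $h_{n-1}(-n)\ne0$. Feeding this into \eqref{sign} for $j=n-1$ gives, in a punctured neighbourhood of $-n$, $\sgn(\Lambda_{n-1})=\sgn\big((\nu+n)\,h_{n-1}(\nu)h_{n+1}(\nu)\big)=-\sgn(\nu+n)$, so $\Lambda_{n-1}<0$ as $\nu\to-n^+$ and $\Lambda_{n-1}>0$ as $\nu\to-n^-$. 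The number of negative entries therefore decreases by exactly one, which is the claim $\mathcal{N}(-n^-)=\mathcal{N}(-n^+)-1$.

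The remaining work, and the step I expect to be the genuine obstacle, is the coincidence $-n\in M$, where $-n$ is also a zero of one or more $h_m$ (the first instance being $n=2$, with $h_2(-2)=0$). Here \eqref{sign} cannot be applied at the crossing and several terms move at once: a simple zero of $h_m$ at $-n$ reverses $\Lambda_{m-2}$ and $\Lambda_m$, in which $h_m$ enters linearly, but not $\Lambda_{m-1}$, in which it enters squared — the same pattern as Lemma \ref{lem:5.2}. I would show these supplementary reversals cancel in pairs exactly as in that lemma: iterating \eqref{Lommel Recur2} at $\nu=-n$ yields the reflection symmetry $h_{n-j}(-n)=(-1)^j h_{n+j}(-n)$, which together with the interlacing of Lemma \ref{lem:5.1} prevents two such zeros from occupying indices that differ by $2$ (except possibly the pair $n-1,n+1$ straddling the index $n$), forcing the pairwise cancellation and leaving the single net change at $\Lambda_{n-1}$ intact; for $n=2$ one checks directly from \eqref{lambda} that $\Lambda_0,\Lambda_1,\Lambda_2$ follow the pattern $+\!\to\!-,\;-\!\to\!+,\;-\!\to\!+$, again a net drop of one. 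The hardest point is the exceptional pair just flagged: one must know that $-n$ is not a zero of $h_{n-1}$ (equivalently, by the reflection symmetry, of $h_{n+1}$), for otherwise $h_{n-1}(-n)h_{n+1}(-n)=0$ and the direction computation of the second paragraph is invalid. Establishing this — equivalently, that $-2$ is the only negative integer belonging to $M$ — is where I would concentrate the effort, exploiting the monic integer polynomials $H_m(\nu)=\nu^{m-1}h_m(\nu)$ together with the reflection symmetry.
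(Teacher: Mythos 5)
Your treatment of the generic case $-n\notin M$ (only $\Lambda_{n-1}$ reverses, and it passes from negative to positive because $h_{n-1}(-n)h_{n+1}(-n)=-\big(h_{n-1}(-n)\big)^2<0$) is exactly the paper's argument, as is your observation that when $-n\in M$ the extra reversals occur in cancelling pairs $(\Lambda_{m-2},\Lambda_m)$ unless two vanishing indices are $n-1$ and $n+1$. The genuine gap is the exceptional case you flag at the end: you do not resolve it, you only propose to rule it out by proving $h_{n-1}(-n)\ne 0$ for every $n$, and you give no proof of that statement. Your parenthetical claim that this is ``equivalently, that $-2$ is the only negative integer belonging to $M$'' is also not an equivalence: $-n\in M$ only requires $h_m(-n)=0$ for \emph{some} $m\ge 2$, which is far weaker than $h_{n-1}(-n)=0$, so the statement you propose to prove is strictly stronger than what you need --- and neither is established. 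Whether $-n$ can ever be a common zero of $h_{n-1}$ and $h_{n+1}$ is an arithmetic question about this polynomial family that the paper never answers, and never needs to.

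The paper handles the exceptional case head-on instead of excluding it. If $m_\ell=n-1$, then by \eqref{Lommel Recur2} necessarily $m_{\ell+1}=n+1$, and the three entries $\Lambda_{n-1}$, $\Lambda_{m_\ell}$, $\Lambda_{m_{\ell+1}-2}$ coincide, while all other reversals still cancel in pairs as in Lemma \ref{lem:5.2}. For the shared entry one still has $\sgn(\Lambda_{n-1})=\sgn\big((\nu+n)h_{n-1}(\nu)h_{n+1}(\nu)\big)$, but now $h_{n-1}h_{n+1}$ has a \emph{double} zero at $-n$, so its sign in a punctured neighborhood of $-n$ is that of $h_{n-1}'(-n)\,h_{n+1}'(-n)$. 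The interlacing \eqref{inter} forces $k_{\ell+1}=k_\ell+1$, and then \eqref{sign2} gives
\begin{equation*}
    \sgn\big(h_{n-1}'(-n)\,h_{n+1}'(-n)\big)=(-1)^{(n-1)+k_\ell}\,(-1)^{(n+1)+k_\ell+1}=-1,
\end{equation*}
so $\sgn(\Lambda_{n-1})=-\sgn(\nu+n)$ exactly as in the generic case: negative to the right of $-n$, positive to the left, hence a net drop of one in $\mathcal{N}$. This is the ingredient your proposal is missing; with it, no global statement about integer zeros of $h_{n-1}$ is required, and the lemma is proved unconditionally.
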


\begin{proof}
    Suppose $-n \notin M$. By \eqref{sign}, obviously the sign of $\Lambda_{n-1}$ changes near $\nu=-n$ but the signs of $\Lambda_j$, $j\ne n-1$, do not change near $\nu=-n$. To explain further, we observe from the recurrence relation \eqref{Lommel Recur2} that
    \begin{equation*}
        h_{n-1}(-n)+h_{n+1}(-n)=0.
    \end{equation*}
    Thus, $h_{n-1}(\nu)$ and $h_{n+1}(\nu)$ have opposite signs on $(-n-\epsilon,-n+\epsilon)$ for small $\epsilon>0$ since $h_{n-1}(-n)\ne0$ and $h_{n+1}(-n)\ne0$. Then, by \eqref{sign}, the sign of $\Lambda_{n-1}$ changes from negative to positive as $\nu$ crosses $-n$ from right to left.
    
    Next, let us consider the case when $-n \in M$. Let $\{(m_j,k_j)\}_{j=1}^K$ be an ordered set of all pairs of positive integers satisfying $-n=\mu_{m_j,k_j}$, arranged in increasing order in $m_j$, that is, $m_1<m_2<\cdots<m_K$. We note that $K$ must be finite by the same argument in the proof of Lemma \ref{lem:5.2}. Since the zeros of $h_m(\nu)$ and $h_{m+1}(\nu)$ cannot coincide for any $m\in \mathbb{N}$, it follows $m_{j+1}-m_j>1$ for any $j=1,2,\cdots,K-1$. Moreover, by \eqref{Lommel Recur2}, $h_m(\nu)$ and $h_{m+2}(\nu)$ may have a common zero only when $\nu=-m-1$. Thus, we find that $m_{j+1}-m_j>2$ for all $j=1,2,\cdots,K-1$ with a single exception where $m_{j+1}=n+1$ and $m_j=n-1$. In view of \eqref{sign}, we observe that as $\nu$ crosses $-n$ from right to left, only elements (possibly redundant)
    \begin{equation}\label{list}
        \Lambda_{n-1}, \Lambda_{m_1-2}, \Lambda_{m_1}, \Lambda_{m_2-2}, \Lambda_{m_2}, \cdots, \Lambda_{m_K-2}, \Lambda_{m_K},
    \end{equation}
    experience sign change among the elements of $\cb{\Lambda_j}_{j=0}^\infty$.

    Suppose $m_j\neq n-1$ for all $j=1,\cdots,K$, which means that $m_{j+1}-m_j>2$ for all $j=1,\cdots,K-1$. In this case, all the elements of \eqref{list} are distinct from each other. In particular, by \eqref{change1} and \eqref{change2}, we have that
    \begin{alignat}{2}
    \lim_{\nu\to -n-} &\sgn\rb{\Lambda_{m_j-2}} = -1,\quad  &&\lim_{\nu\to -n+}  \sgn\rb{\Lambda_{m_j-2}} = 1, \label{change3}\\
    \lim_{\nu\to -n-} &\sgn\rb{\Lambda_{m_j}} = 1,\quad  &&\lim_{\nu\to -n+}  \sgn\rb{\Lambda_{m_j}} = -1, \label{change4}
    \end{alignat}
    for each $j=1,\cdots,K$. Since we know that $\Lambda_{n-1}$ changes from negative to positive, the total number of negative numbers in $\{ \Lambda_j\}_{j=0}^\infty$ decreases by $1$ as $\nu$ crosses $-n$ from right to left, implying the desired result.

    The remaining case is when $m_{\ell}=n-1$ for some $1\le \ell< K$. In the present case, based on \eqref{Lommel Recur2}, it necessarily follows $m_{\ell+1}=n+1$. Then, it is evident that $\Lambda_{n-1}=\Lambda_{m_{\ell}}=\Lambda_{m_{\ell+1}-2}$ and all the others in \eqref{list} are distinct from each other. Likewise, we find that \eqref{change3} holds true for each $j\in \cb{1,\cdots,K}\setminus\cb{\ell+1}$, as well as \eqref{change4} holds true for each $j\in \cb{1,\cdots,K}\setminus\cb{\ell}$.
    To elaborate the sign change of $\Lambda_{n-1}$ near $\nu=-n= \mu_{n-1,k_\ell} = \mu_{n+1,k_{\ell+1}}$, we observe from \eqref{inter} that 
    $\mu_{n,k_\ell+1} < \mu_{{n+1},k_{\ell+1}}= \mu_{n-1,k_\ell} < \mu_{n,k_\ell} < \mu_{n+1,k_\ell}$,
    leading to $k_{\ell+1} = k_\ell +1$. Regarding \eqref{sign} and \eqref{sign3}, we conclude that $\Lambda_{n-1}$ turns into positive from negative as $\nu$ crosses $-n$ from the right.
    Therefore, the proof is now complete.
\end{proof}

\begin{lemma}\label{lem:5.4}
For $k\in\mathbb{N}$,
\begin{equation*}
    \lim_{\nu\to \nu_{k}-}\mathcal{N}(\nu)= \lim_{\nu\to \nu_{k}+}\mathcal{N}(\nu)+2.
\end{equation*}
Thus, the number of complex zeros of $J'_\nu(x)$ increases by $4$ as $\nu$ crosses $\nu_k$ from right to left.
\end{lemma}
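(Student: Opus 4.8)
The plan is to prove that both one-sided limits $\mathcal{N}^{\pm}:=\lim_{\nu\to\nu_k\pm}\mathcal{N}(\nu)$ exist and satisfy $\mathcal{N}^{-}=\mathcal{N}^{+}+2$, by comparing the sign pattern of the sequence $\{\Delta_n\}$ just to the left and just to the right of $\nu_k$. The starting point is a closed sign formula for $h_n(\nu)$: since $H_n(\nu)=\nu^{n-1}h_n(\nu)$ is monic of degree $n-1$ with precisely the simple negative zeros $\mu_{n,1}>\cdots>\mu_{n,n-1}$ (Lemma \ref{lem:5.1}), setting $z_n(\nu)=\#\{\,j:\mu_{n,j}>\nu\,\}$ gives $\sgn h_n(\nu)=(-1)^{z_n(\nu)+n-1}$ for $n\ge 1$, while $h_0\equiv1$. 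I would also reformulate the counting function: because $\Lambda_n=\Delta_{n-1}\Delta_n$ and $\Delta_{-1}=1$, the quantity $\mathcal{N}(\nu)$ is exactly the number of sign changes in the sequence $(\Delta_{-1},\Delta_0,\Delta_1,\dots)$.

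First I would settle that $\mathcal{N}^{\pm}$ are well defined. By Remark \ref{rem:5.1}(iii) the zeros $\mu_{n,k}$ increase to $\nu_k$ from below, and no other $\mu_{n,j}$ accumulate at $\nu_k$; hence for small $\delta>0$ the interval $(\nu_k,\nu_k+\delta)$ meets neither $M$ nor $\mathbb{Z}_{\le0}$, so $\mathcal{N}$ is constant there. On the left, any neighbourhood $(\nu_k-\delta',\nu_k)$ contains the infinitely many points $\mu_{n,k}$, but each such crossing is neutral by Lemma \ref{lem:5.2}, so $\mathcal{N}$ is again constant on the punctured left neighbourhood. This makes $\mathcal{N}^{+}$ and $\mathcal{N}^{-}$ meaningful.

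Next I fix representatives $\nu^{+}\in(\nu_k,\nu_k+\delta)$ and $\nu^{-}\in(\nu_k-\delta',\nu_k)$, both within a small $\epsilon$ of $\nu_k$, and compare the two $z$-sequences. Since $\mu_{n,j}\to\nu_j$ and only $\mu_{n,k}\uparrow\nu_k$ approaches $\nu_k$, for $\epsilon$ small the sole zeros lying in $(\nu^{-},\nu^{+})$ are the $\mu_{n,k}$ with $n\ge N_0$, where $N_0$ is the least index with $\mu_{n,k}>\nu^{-}$; this yields the clean step relation $z_n(\nu^{-})=z_n(\nu^{+})+\mathbf 1\{n\ge N_0\}$. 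Substituting into $\sgn\Delta_n=\sgn\!\big(h_{n+1}(\nu)h_{n+2}(\nu)\big)\prod_{j=1}^{n+1}\sgn(\nu+j)$, which is read off from \eqref{Hankel1} because every exponent there is odd, the step relation propagates to the increments and forces $\sgn\Delta_n(\nu^{-})=\sgn\Delta_n(\nu^{+})$ for all $n\neq N_0-2$, with a single reversal $\sgn\Delta_{N_0-2}(\nu^{-})=-\sgn\Delta_{N_0-2}(\nu^{+})$.

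Finally, taking $\nu^{-}$ close enough to $\nu_k$ makes $N_0$ arbitrarily large, so $m:=N_0-2$ lies in the saturated tail where $z_n(\nu^{+})$ has stabilised; there one checks $\sgn\Delta_{m-1}(\nu^{+})=\sgn\Delta_{m}(\nu^{+})=\sgn\Delta_{m+1}(\nu^{+})=(-1)^{k+1}$. Reversing a single entry of a $\pm1$ sequence whose two neighbours agree creates exactly two new sign changes, so $\mathcal{N}^{-}=\mathcal{N}^{+}+2$, i.e. $J'_\nu(x)$ gains $4$ complex zeros. I expect the main obstacle to be precisely this bookkeeping: reducing the infinitely many tail sign changes to one net reversal, together with the degenerate possibility $\nu_k\in M$ (some $h_m(\nu_k)=0$). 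In that case the extra finite-index discrepancies appear only through the neutral pairs $(\Lambda_{m_i-2},\Lambda_{m_i})$ of Lemma \ref{lem:5.2} and cancel, so the net change remains $+2$.
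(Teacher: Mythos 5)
Your proposal is correct, and it reaches the paper's conclusion through the same landmarks (isolation of $\nu_k$ from all other points of $M\cup N\cup\mathbb{Z}_{\le0}$, the sign of $\Delta_n$ read off from \eqref{Hankel1} via $h_{n+1}h_{n+2}$, and the neutral pairs of Lemma \ref{lem:5.2} in the degenerate case $\nu_k\in M$), but with genuinely different bookkeeping. The paper works at the level of the $\Lambda_n$'s: it partitions the left neighbourhood of $\nu_k$ into the intervals $(\mu_{n,k},\mu_{n+1,k})$ and applies the crossing rules \eqref{change1}--\eqref{change2} repeatedly to show that on each such interval exactly the two tail elements $\Lambda_{n-1},\Lambda_n$ are negative, while on $[\nu_k,\nu_k+\epsilon)$ none are, whence the count drops by $2$. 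You instead fix one point $\nu^{\pm}$ on each side, encode signs through the parity formula $\sgn h_n(\nu)=(-1)^{z_n(\nu)+n-1}$ (valid for $\nu<0$, which is all that is needed here), and localize the entire discrepancy between the two sides to a single flipped Hankel determinant $\Delta_{N_0-2}$; the reformulation of $\mathcal{N}$ as the number of sign changes in $(\Delta_{-1},\Delta_0,\Delta_1,\dots)$ then reduces the lemma to the elementary observation that flipping one entry of a $\pm1$ sequence between two equal neighbours (equal because $z_n(\nu^{+})$ has stabilised at $k-1$, giving the common tail sign $(-1)^{k+1}$, which I checked is right) creates exactly two sign changes. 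Your route avoids re-deriving per-crossing sign patterns across infinitely many crossings; the paper's route yields explicit sign information on every subinterval, which it then reuses verbatim for the case $\nu_k\in M$.

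Two steps you compress should be expanded to the level of rigor the paper supplies. First, the claim that no $\mu_{n,\ell}$ with $\ell\ne k$ accumulates at $\nu_k$ does not follow from Remark \ref{rem:5.1}(iii) alone: that remark (with Proposition \ref{prop:4.1}) handles $\ell>k$ by pushing $\mu_{n,\ell}$ below $\nu_{k+1}<-k-1$, but for $\ell<k$ one needs the paper's pigeonhole/subsequence argument, namely that accumulation at $\nu_k$ by points $\mu_{n_j,L}$ with a fixed $L<k$ would force $\nu_L=\nu_k$ by Remark \ref{rem:5.1}(ii), a contradiction. Second, your appeal to the neutral pairs of Lemma \ref{lem:5.2} when $\nu_k\in M$ requires noting that the pattern \eqref{sign5} was derived for $\mu\in M\setminus(N\cup\mathbb{Z}_{\le0})$, but its derivation uses only simplicity of the zeros of $h_n$, the interlacing of Lemma \ref{lem:5.1}, and $\mu\notin\mathbb{Z}_{\le0}$, so it persists at $\mu=\nu_k\in M\cap N$; one should also record that the finitely many degenerate indices $m_j$ stay clear of $N_0-2$ once $\nu^{-}$ is close enough to $\nu_k$, so the two effects cannot interfere. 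Both gaps are filled exactly as in the paper's proof.
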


\begin{proof}
    Let $\nu_k \in N$ be fixed. We claim that there exists $\epsilon \in (0,1/2]$ such that 
    \begin{equation}\label{claim}
        \big[(\nu_k - \epsilon, \nu_k)\cup(\nu_k,\nu_k + \epsilon)\big] \cap \cb{ \mu_{n,\ell}\,:\, \ell\ne k,\ n\ge \ell+1 } = \emptyset.
    \end{equation}
    For the sake of contradiction, we suppose that, for any $\epsilon>0$, 
    \begin{equation*}
        B_\epsilon \cap \cb{ \mu_{n,\ell}\,:\, \ell\ne k,\ n\ge \ell+1 } \neq \emptyset,
    \end{equation*}
    where $B_\epsilon = (\nu_k - \epsilon, \nu_k)\cup(\nu_k,\nu_k + \epsilon)$.
    If we take an arbitrary $0<\epsilon\le 1/2$, by using Proposition \ref{prop:4.1} and Remark \ref{rem:5.1} (i)-(iii), it follows that
    \begin{equation*}
        \mu_{n,\ell} <\nu_{k+1} < -k-1< \nu_k-\epsilon < \mu_{m,k} < \nu_k
    \end{equation*}
    for all $\ell > k$, $n\ge \ell+1$ and $\mu_{m,k}\in B_\epsilon\cap \cb{\mu_{n,k}}_{n=k+1}^\infty$. Hence, we deduce that $B_\epsilon \cap \cb{ \mu_{n,\ell}\,:\, \ell> k,\ n\ge \ell+1  } = \emptyset$, which implies that
    \begin{equation*}
        B_\epsilon \cap \cb{ \mu_{n,\ell}\,:\, \ell< k,\ n\ge \ell+1 } \neq \emptyset,
    \end{equation*}
    for any $0<\epsilon\le 1/2$. By the pigeonhole principle and noting that
    \begin{equation*}
        \cb{ \mu_{n,\ell}\,:\, \ell< k,\ n\ge \ell+1 } = \bigcup_{1\le \ell <k} \cb{\mu_{n,\ell}}_{n=\ell+1}^\infty,
    \end{equation*}
    one may choose a subsequence $\big\{\mu_{n_{j},L}\big\}_{j=1}^\infty$ of $\big\{\mu_{n,L}\big\}_{n=L+1}^\infty$ for some $1\le L < k$ such that $\mu_{n_j,L}\to \nu_k$ as $j \to \infty$ but $\mu_{n_j,L} \ne \nu_k$ for $j\ge1$. Then, it follows from Remark \ref{rem:5.1} (ii) that
    \begin{equation*}
        \nu_k = \lim_{j\to\infty} \mu_{n_j,L} = \nu_L,\quad k\ne L,
    \end{equation*}
    which is a contradiction since $\nu_k\ne \nu_L$ if $k\ne L$. Hence, \eqref{claim} holds true for some $0<\epsilon \le 1/2$. Consequently, we have 
    \begin{equation}\label{accum}
        B_\epsilon \cap ( M\cup N \cup \mathbb{Z}_{\le0} ) = \{ \mu_{n_0,k}, \mu_{n_0+1,k},\cdots \},
    \end{equation}
    for some $n_0\ge 2$, with $\nu_k-\epsilon <\mu_{n_0,k}< \mu_{n_0+1,k}<\cdots< \nu_k <\nu_k+\epsilon$. If $\nu_k \not \in M$, by considering \eqref{sign}, only elements of $\cb{\Lambda_{n}}_{n\ge n_0-1}$ among $\cb{\Lambda_j}_{j=0}^\infty$ experience sign changes in the interval $(\mu_{n_0,k}, \nu_k+\epsilon)$. We divide the interval into $(\mu_{n,k},\mu_{n+1,k})$ for each $n\ge n_0$ and $[\nu_k,\nu_k+\epsilon)$.

    To be more concrete, by using \eqref{change1}, \eqref{change2} and \eqref{accum}, we have that for each $n\ge n_0$, $\Lambda_n$ is negative on each of intervals $(\mu_{n,k},\mu_{n+1,k})$ and $(\mu_{n+1,k},\mu_{n+2,k})$ but it is positive on each of intervals $[\nu_k,\nu_k+\epsilon)$ and $(\mu_{m,k},\mu_{m+1,k})$ for $m\ge n_0$ with $m\ne n,n+1$. In particular, $\Lambda_{n_0-1}$ is negative on $(\mu_{n_0,k},\mu_{n_0+1,k})$, and positive elsewhere. In other words, if $\nu \in (\mu_{n,k},\mu_{n+1,k})$ for fixed $n\ge n_0$, two elements $\Lambda_{n-1}$ and $\Lambda_{n}$ are negative, while the others are positive. If $\nu \in [\nu_k,\nu_k+\epsilon)$, however, all elements of $\cb{\Lambda_n}_{n\ge n_0-1}$ are positive. Hence, the total number of negative numbers in $\{ \Lambda_j\}_{j=0}^\infty$ increases by $2$ as $\nu$ crosses $\nu_k$ from right to left.
    Moreover, since $\mathcal{N}(\nu)$ is constant on the closed interval $[\nu_k,\nu_k+\epsilon/2]\subset \R \setminus (M\cup \mathbb{Z}_{\le 0})$, we have 
    \begin{equation}\label{accum2}
        \mathcal{N}(\nu_k)=\lim_{\nu\to \nu_{k}+}\mathcal{N}(\nu).
    \end{equation}

    In general, if $\nu_k \in M$, there can be distinct pairs $\cb{(m_j,k_j)}_{j=1}^K $ such that $\nu_k = \mu_{m_j,k_j}$ for all $j=1,\cdots,K$. The same reasoning in the proof of Lemma \ref{lem:5.2} shows that $K$ is finite. Taking $0<\epsilon\le 1/2$ and $n_0-1 > \max\cb{ m_j\,:\, j=1,\cdots,K }$, it is straightforward from \eqref{sign} to see that the only elements of $\cb{\Lambda_{n},\Lambda_{m_j-2},\Lambda_{m_j}\,:\,  n\ge n_0-1,\ j=1,\cdots,K}$,
    among $\cb{\Lambda_j}_{j=0}^\infty$, experience sign changes in the interval $(\mu_{n_0,k}, \nu_k+\epsilon)$. We know that each of $\{\Lambda_n\}_{n\ge n_0-1}$ obeys the same sign pattern as above. Since each of $\cb{\Lambda_{m_j-2},\Lambda_{m_j}\,:\, j=1,\cdots,K}$ follows the sign pattern discussed in \eqref{sign5}, the total number of negative numbers in $\{ \Lambda_j\}_{j=0}^\infty$ increases by $2$ as $\nu$ crosses $\nu_k$ from right to left. The proof is now complete.

\end{proof}

\begin{proof}[Proof of Theorem \ref{thm:main}]
Our proof starts with the case where $\nu>0$. Lemma \ref{lem:5.1} implies that $q_{n,\nu}(1/\nu) = h_n(\nu)>0$ for $\nu>0$, $n\in \mathbb{N}$. Then, it is obvious from Theorem \ref{thm:3.3} that $\Delta_n>0$ for $\nu>0$, $n\in \mathbb{N}$, and thus $\mathcal{N}(\nu)=0$ for $\nu>0$. Hence, by applying Theorem \ref{thm:A} in correspondence with $\sigma_n = \sigma_\nu'(n)$ and $\mathcal{D}_n = \Delta_n$, we conclude that $J_\nu'(x)$ has no complex zeros if $\nu >0$.

In the case where $-1<\nu<0$, we observe that
    \begin{equation*}
        \Lambda_0 = \frac{\nu+2}{2\nu(\nu+1)}< 0.
    \end{equation*}
In view of \eqref{init_behav} and \eqref{sign}, for each $n\ge 1$, $\Lambda_n$ is positive in the vicinity of $0$. Thus, it follows that
    \begin{equation}\label{case1}
        \lim_{\nu\to 0-} \mathcal{N}(\nu)=1.
    \end{equation}
By means of Proposition \ref{prop:4.1} and Remark (iii), we find that $(-1,0)\cap \mathcal{S} = \emptyset$ where $\mathcal{S} = M \cup N \cup \mathbb{Z}_{\le0}$, which leads that $\mathcal{N}(\nu)$ is constant on any closed interval within $(-1,0)$. Hence, \eqref{case1} implies $\mathcal{N}(\nu)=1$ for $-1<\nu <0$ so that $J'_\nu(x)$ has two complex zeros, which are purely imaginary by Proposition \ref{prop:5.1}.

We now shift our attention to the case when $\nu \in (-\infty,-1)\setminus \mathcal{S}$. 
On combining Lemma \ref{lem:5.2}, \ref{lem:5.3} and \ref{lem:5.4} with $\mathcal{N}(\nu)=1$ for $-1<\nu <0$, we have that for each $k=1,2,\cdots$,
\begin{itemize}
    \item $\mathcal{N}(\nu)=k-1$ for $\nu \in (\nu_k,-k)\setminus M$,
    \item $\mathcal{N}(\nu)=k+1$ for $\nu \in (-k-1,\nu_k)\setminus M$.
\end{itemize}

From a technical point of view, the remaining case when $\nu \in \mathcal{S}$ can be divided into $\mathbb{Z}_{\le0}$, $N\setminus M$, and $M\setminus \mathbb{Z}_{\le0}$, which are disjoint from each other. 
If $\nu$ lies in $\mathbb{Z}_{\le 0}$, $J_\nu'(x)$ has only real zeros due to the identities $J_0'(x) = -J_1(x)$ and $J_{-n}'(x) = (-1)^{n} J_{n}'(x)$, $n\in \mathbb{Z}$ (see \cite[\S 2.1 (2), (7)]{Watson}), where $J_1(x)$ has only real zeros according to Theorem \ref{thm:B}.
The case when $\nu\in N\setminus M$ can be dealt using \eqref{accum2}. Thus $J_{\nu_k}'(x)$, $\nu_k \in N \setminus M$, has exactly $(2k-2)$ complex zeros.

On the other hand, the last case when $\nu \in M\setminus \mathbb{Z}_{\le0}$ cannot be directly determined via $\mathcal{N}(\nu)$, because $\mathcal{N}(\nu)$ is undefined for $\nu \in M$. If there is a change in the number of complex zeros at $\nu \in M\setminus \mathbb{Z}_{\le 0}$, it falls into the following scenarios: The first scenario is that some complex zeros become real zeros or vice versa. Specifically, due to the symmetry of zeros, a zero whose multiplicity is no less than twice inevitably arises on the real line in the first scenario, and those non-simple zeros occur only at $j'=0$ or $j'=\pm \nu$. This is because of the fact that, with the possible exception of $j'=0$ or $j'=\pm \nu$, all zeros of $J_\nu'(x)$ are simple, which is easily shown by using the Bessel differential equation.
The second scenario is that some zeros of $J'_{\nu}(x)$ suddenly disappear or come into existence. Notably, \eqref{N-W} shows that the zeros of $J'_{\nu}(x)$ move continuously as $\nu$ varies, possibly except when the right-hand side in \eqref{N-W} diverges, i.e., when $j'=0$ or $j'=\pm \nu$ (see \cite[\S 4]{Lorch2}, \cite[\S 15.6]{Watson} for analogous arguments). In the former case of $j'=0$, $\nu$ should belong to $\mathbb{Z}_{\le 0}$ and the latter case of $j'=\pm \nu$ is only when $\nu\in N$ since $j'$ is a double zero of $J'_\nu(x)$. Thereby, both scenarios possibly happen only when $\nu\in N\cup \mathbb{Z}_{\le 0}$.

Since both scenarios are definitely ruled out at $\nu\in M\setminus (N\cup \mathbb{Z}_{\le 0})$, we conclude that the number of complex zeros for $J_\nu'(x)$ does not change in the small neighborhood of $\nu\in M\setminus (N\cup \mathbb{Z}_{\le 0})$. It remains to figure out the number of complex zeros for $J_\nu'(x)$ when $\nu \in M\cap N$. In fact, the present case fits with the first scenario. To elaborate, we let $\nu = \nu_k \in M\cap N$ for some $k\ge1$. Then, in accordance with the conclusion made above in this paragraph and Lemma \ref{lem:5.4}, it can be shown that, for sufficiently small $\epsilon>0$, $J_\nu'(x)$ has $(2k+2)$ complex zeros if $\nu\in (\nu_k-\epsilon,\nu_k)$, and $(2k-2)$ complex zeros if $\nu \in (\nu_k,\nu_k+\epsilon)$. Whereas, the second scenario does not happen in the case when $\nu \in M\cap N$. Since a zero of $J_\nu'(x)$ is located at the origin only when $\nu\in \mathbb{Z}_{\le0}$, it suffices to consider whether zeros of $J'_{\nu}(x)$ suddenly disappear or come into existence at $x= \pm \nu$ for $\nu \in M\cap N$. We claim that the number of zeros of $J'_\nu(x)$ in a neighbourhood of $x=\nu_k$ is preserved near $\nu=\nu_k$ , i.e., for each $k\in \mathbb{N}$, there exist $\delta>0$ and $\epsilon>0$ such that $J'_\nu(x)$ has exactly two zeros, including multiplicities, in the $\delta$-neighborhood of $x=\nu_k$ for $\nu \in (\nu_k-\epsilon,\nu_k+\epsilon)$. Let $f(x)=J'_{\nu_k}(x)$.
Then, $f(x)$ has a double zero at $x=\nu_k$. Note that $f''(\nu_k)\neq 0$. We take a small $\delta_1>0$ such that $f(x)$ has no other zeros in the disk $|x-\nu_k|\le\delta_1$. In regard to the Taylor expansion of $f(x)$ at $\nu_k$, given by
\begin{equation*}
    f(x)=\frac{J^{(3)}_{\nu_k}(\nu_k)}{2!}(x-\nu_k)^2+\frac{J^{(4)}_{\nu_k}(\nu_k)}{3!}(x-\nu_k)^3+\cdots,
\end{equation*}
we may take small $0<\delta\le \delta_1$ such that 
\begin{equation*}
    |f(x)|> \frac{|J^{(3)}_{\nu_k}(\nu_k)|}{4} \delta^2,
\end{equation*}
on the circle $|x-\nu_k|=\delta$. Now, let us define $ g(x)=J'_{\nu}(x)-J'_{\nu_k}(x)$. Since $\lim_{\nu\to \nu_k}J'_{\nu}(x)=J'_{\nu_k}(x)$ for $x\neq 0$ and
\begin{equation*}
    \max_{|x-\nu_k|=\delta} \big(J'_{\nu}(x)-J'_{\nu_k}(x)\big)
\end{equation*}
is a continuous function of $\nu$, for any $\eta>0$, we can take $\epsilon>0$ such that $|g(x)| <\eta$ on $|x-\nu_k|=\delta$ for $|\nu-\nu_k|<\epsilon$. Taking $\eta = |J^{(3)}_{\nu_k}(\nu_k)|\, \delta^2 /4$,
we have $|g(x)| < |f(x)|$ on $|x-\nu_k|=\delta$ for $|\nu-\nu_k|<\epsilon$. By Rouch\'{e}'s theorem, $f(x)$ and $f(x)+g(x)$ have the same number of zeros in the disk $|x-\nu_k|<\delta$. Since $f(x)$ has only two zeros (up to multiplicity) in the disk, so does $J'_\nu(x)$, which establishes our claim. Thereby, by the fact that $J_\nu'(x)$ has a double zero located at $x=\nu_k$ when $\nu_k\in N$ and the symmetry of the zeros, we deduce that $J_\nu'(x)$ has $(2k-2)$ complex zeros if $\nu = \nu_k$, as if $\nu \in (\nu_k,\nu_k+\epsilon)$.

Therefore, on combining all the results discussed above and Proposition \ref{prop:5.1}, we complete the proof of Theorem \ref{thm:main}.

\end{proof}

\bigskip \noindent
{\bf Acknowledgements.}
The first author is grateful to Mourad E. H. Ismail for helpful comments on an early version of this work.


\begin{thebibliography}{12}


\bibitem{Baricz2} {\'A}. Baricz, C. Kokologiannaki and T. Pog{\'a}ny, Zeros of Bessel function derivatives. \emph{Proc. Amer. Math. Soc}., 146(1), pp. 209--222. (2018)

\bibitem{Baricz1} {\'A}. Baricz and F. {\v S}tampach, The Hurwitz-type theorem for the regular Coulomb wave function via Hankel determinants. \emph{Linear Algebra Appl}., 548, pp. 259--272 (2018)

\bibitem{Chihara} T. S. Chihara, \emph{An introduction to orthogonal polynomials}. Courier Corporation. (2011)

\bibitem{CLP} S.-Y. Chung, S. Lee and Y. W. Park, On the generalized interlacing property for the zeros of Bessel functions. \emph{J. Math. Anal. Appl.},
539(1), 128495. (2024)

\bibitem{Dixson} A. C. Dixon, On a property of Bessel's functions. \emph{Messenger}, 32, pp. 7--8. (1903)

\bibitem{Frantzis} D. A. Frantzis, C. G. Kokologiannaki and E. N. Petropoulou, On the zeros of derivatives of Bessel functions. \emph{Integral Transforms Spec. Funct}., pp. 1--9. (2024)

\bibitem{Giusti} A. Giusti and F. Mainardi, On infinite series concerning zeros of Bessel functions of the first kind. \emph{Eur. Phys. J. Plus}, 131, pp. 1--7. (2016)

\bibitem{Grosjean} C. C. Grosjean, The orthogonality property of the Lommel polynomials and a twofold infinity of relations between Rayleigh's $\sigma$-sums. \emph{J. Comput. Appl. Math.}, 10(3), pp. 355--382. (1984)

\bibitem{Heinhold} J. Heinhold and U. Kulisch, Über die Nullstellen der ersten Ableitung von Besselfunktionen. \emph{Computing}, 1, pp. 119--126. (1966)

\bibitem{Hurwitz} A. Hurwitz, Ueber die Nullstellen der Bessel'schen Function. \emph{Math. Ann.} 33, pp. 246--266. (1888)

\bibitem{Ismail1} M. E. Ismail, \emph{Classical and quantum orthogonal polynomials in one variable}. volmune 13, Cambridge university press. (2005)

\bibitem{Ismail4} M. E. Ismail and M. E. Muldoon, Zeros of combinations of Bessel functions and their derivatives. \emph{Applicable Anal}. 31(1-2), pp. 73--90. (1988)

\bibitem{Kerimov1} M. K. Kerimov and S. L. Skorokhodov, Evaluation of complex zeros of bessel functions $J_\nu(z)$ and $I_\nu(z)$ and their derivatives. \emph{U.S.S.R. Comput. Math. and Math. Phys}., 24(5), pp. 131--141. (1984)

\bibitem{Kerimov2} M. K. Kerimov and S. L. Skorokhodov, Calculation of the multiple zeros of the derivatives of the cylindrical Bessel functions $J_\nu(z)$ and $Y_\nu (z)$. \emph{U.S.S.R. Comput. Math. and Math. Phys}., 25(6), pp. 101--107. (1985)

\bibitem{Kostin} A. B. Kostin and V. B. Sherstyukov, Computation of sums of natural powers of the inverses of roots of an equation connected with a spectral problem. \emph{J. Math. Sci}., 244(4). (2020)

\bibitem{Kytmanov} A. M. Kytmanov and O. V. Khodos, On localization of zeros of an entire function of finite order of growth. \emph{Complex Anal. Oper. Theory}, 11(2), pp. 393--416. (2017)

\bibitem{Lense} J. Lense, {\"U}ber die Nullstellen der Besselfunktionen und ihrer ersten Ableitungen. Jahresber. \emph{Dtsch. Math.-Verl}, 43, pp. 146--153. (1933)

\bibitem{Lorch2} L. Lorch and M. E. Muldoon, The real zeros of the derivatives of cylinder functions of negative order. \emph{Methods Appl}. Anal., 6(3), pp. 317--326. (1999)

\bibitem{Olver} F. W. J. Olver, \emph{Introduction to asymptotics and special functions}. Academic Press, New York, San Fransisco, London. (1974)

\bibitem{DLMF} F. W. J. Olver et al., eds. \emph{NIST handbook of mathematical functions} with CD-ROM. Cambridge university press. (2010)

\bibitem{Shohat} J. A. Shohat and J. D. Tamarkin, \emph{The problem of moments}. volume 1, American Mathematical Society (RI). (1950)

\bibitem{Sneddon} I. N. Sneddon, On some infinite series involving the zeros of Bessel functions of the first kind. \emph{Proc. Glasgow Math. Assoc}., 4(3), pp. 144--156. (1960)

\bibitem{Stieltjes} T. J. Stieltjes, Recherches sur les fractions continues. In \emph{Annales de la Facult{\'e} des sciences de Toulouse: Math{\'e}matiques}, volume 8 of number 4, pp. 1--122. (1894)

\bibitem{Palmai} T. P{\'a}lmai, On the interlacing of cylinder functions, \emph{Math. Inequal. Appl}. 16, pp. 241--247. (2013)

\bibitem{Watson} G. N. Watson, \emph{A treatise on the theory of Bessel functions}, Cambridge University Press. (1922)

\end{thebibliography}
\end{document}